%
%
%


\documentclass{amsart}




\usepackage{amssymb}
\usepackage[utf8]{inputenc}
\usepackage{color}
\usepackage{hyperref}
\usepackage{tikz-cd}

\makeatletter
\newcommand{\mylabel}[2]{#2\def\@currentlabel{#2}\label{#1}}
\makeatother

\newcommand{\set}[2]{\{#1 \,\mid \, #2\}}


\newcommand{\Ker}{\operatorname{ker}}

\newcommand{\im}{\operatorname{im}}

\newcommand{\Ob}{\operatorname{Ob}}
\newcommand{\Hom}{\operatorname{Hom}}

\newcommand{\id}{\operatorname{id}}

\newcommand{\Sh}{\operatorname{Sh}}

\makeatletter
\newcommand{\subalign}[1]{%
  \vcenter{%
    \Let@ \restore@math@cr \default@tag
    \baselineskip\fontdimen10 \scriptfont\tw@
    \advance\baselineskip\fontdimen12 \scriptfont\tw@
    \lineskip\thr@@\fontdimen8 \scriptfont\thr@@
    \lineskiplimit\lineskip
    \ialign{\hfil$\m@th\scriptstyle##$&$\m@th\scriptstyle{}##$\hfil\crcr
      #1\crcr
    }%
  }%
}
\makeatother

\makeatletter
\newtheorem*{rep@theorem}{\rep@title}
\newcommand{\newreptheorem}[2]{%
\newenvironment{rep#1}[1]{%
 \def\rep@title{#2 \ref{##1}}%
 \begin{rep@theorem}}%
 {\end{rep@theorem}}}
\makeatother


\newtheorem{theorem}{Theorem}[section]
\newreptheorem{theorem}{Theorem}
\newtheorem{lemma}[theorem]{Lemma}
\newtheorem{proposition}[theorem]{Proposition}
\newtheorem{corollary}[theorem]{Corollary}

\theoremstyle{definition}
\newtheorem{definition}[theorem]{Definition}
\newtheorem{example}[theorem]{Example}

\theoremstyle{remark}
\newtheorem{remark}[theorem]{Remark}

\numberwithin{equation}{section}

\begin{document}

\title{Extra-fine sheaves and interaction decompositions}


\author[D. Bennequin]{Daniel Bennequin}
\address{}
\curraddr{}
\email{}
\thanks{}

\author[O. Peltre]{Olivier Peltre}
\address{}
\curraddr{}
\email{}
\thanks{}

\author[G. Sergeant-Perthuis]{Grégoire Sergeant-Perthuis}
\address{}
\curraddr{}
\email{}
\thanks{}

\author[J.P. Vigneaux]{Juan Pablo Vigneaux}
\address{}
\curraddr{}
\email{}
\thanks{}

\subjclass[2020]{Primary 14F05, 55N30, 55U10 Secondary 06A11, 60B99   }

\date{}


\begin{abstract}
We introduce an original notion of extra-fine sheaf on a topological space, {and a variant (hyper-extra-fine)} for which \v{C}ech cohomology in strictly positive degree vanishes. We provide a characterization of such sheaves when the topological space is a partially ordered set (poset) equipped with the Alexandrov topology. Then we further specialize our results to some sheaves of vector spaces and injective maps, where extra-fineness is (essentially) equivalent to the decomposition of the sheaf into a direct sum of subfunctors, known as interaction decomposition, and can be expressed by a sum-intersection condition. We use these results to compute the dimension of the space of global sections when the presheaves are freely generated over a functor of sets, generalizing classical counting formulae for the number of solutions of the linearized marginal problem (Kellerer and Mat{\'u}{\v{s}}). We finish with a comparison theorem between the \v{C}ech cohomology associated to a covering and the topos cohomology of the poset with coefficients in the presheaf, which is also the cohomology of a cosimplicial local system over the nerve of the poset. For that, we give a detailed treatment of cosimplicial local systems on simplicial sets. The appendixes present presheaves, sheaves and \v{C}ech cohomology, and their application to the marginal problem.
\end{abstract}

\maketitle

\setcounter{tocdepth}{1}
\tableofcontents

\section{Introduction}\label{sec:introduction}

This article develops cohomological tools to study collections of data associated to hypergraphs, or to more general partially ordered sets (posets). The kind of data we will consider is organized in families of sets indexed by the elements of the poset, forming covariant and contravariant functors with respect to the partial ordering, which are called respectively copresheaves and presheaves over the poset. Such functors have been applied to several problems at the crossroad of data analysis, information theory, coding theory, logic, computation, and bayesian learning. We will mention below some of these problems and develop several applications of the cohomological approach.\\

In this work, we see a partially ordered set (poset) $\mathcal A$ as a small category such that:
\begin{enumerate}
\item there is at most one morphism between two objects;
\item if $a \to b$ and $b\to a$, then $a = b$.
\end{enumerate}
An hypergraph is a particular case of poset, whose objects are some finite subsets of an index set $I$, and there exist a morphism $S\to S'$ whenever $S'\subseteq S$. An abstract simplicial complex is an hypergraph $\mathcal K$ that satisfies an additional property:  if $S$ belongs to $\mathcal K$, then every subset of $S$ belongs to $\mathcal K$ too.\\

A presheaf on a category $\mathcal A$ is a contravariant functor $F$ from $\mathcal A$ to the category of sets $\mathcal S$, in other terms it is a covariant functor on the opposite category $F:\mathcal{A}^{op}\to \mathcal S$. A copresheaf is just a covariant functor  $F:\mathcal A\to \mathcal S$. The presheaves of classical sheaf theory on topological spaces \cite{Godement1964} are obtained when $\mathcal A$ is the category of open sets of some topological space, which is an example of poset.\\

Abstract simplicial complexes play a prominent role in \emph{persistent homology} \cite{Carlsson2006, Ghrist2008}, a technique to extract topological features that is a cornerstone of applied algebraic topology. The basic idea is to replace a sequence of data points in a metric space by an abstract simplicial complex induced by a proximity parameter (e.g. the \v{C}ech complex or the Vietoris-Rips complex). Then homological tools (spectral sequences) are applied to an increasing family of complexes for defining invariant quantities of the data.\\
\indent Curry's dissertation \cite{Curry2013} showed that persistent homology can be  extended in several directions involving sheaves on posets of parameters.\\
\indent Curry \cite{Curry2013} also gave a systematic treatment of sheaves defined on another kind of complexes, the cellular complexes (giving cellular sheaves and cosheaves), which he traces back to Zeeman's Ph.D. thesis \cite{Zeeman1962}. A spectral theory of such sheaves was later developed by Hansen and Ghrist  \cite{Hansen2018}. Those works list several situations that can be modeled by cellular (co)sheaves, which include network coding, sensor networks, distributed consensus, flocking, synchronization and opinion dynamics, among other things.

Along similar lines, a series of works by Robinson and collaborators
 \cite{Robinson2013, Robinson2015, Robinson2017} argued that sheaves are a canonical model for the integration of information provided by interconnected sensors. In those works, the vertices of an abstract simplicial complex represent heterogeneous data sources and the abstract simplexes some sort of interaction between these sources.  It is claimed that sheaves constitute a canonical data structure if one requires sufficient generality to represent all sensors of interest and the ability to summarize information faithfully. A similar approach is taken by Mansourbeigi in his doctoral dissertation \cite{Mansourbeigi2018}.

Independently, Abramsky and his collaborators (see e.g. \cite{Abramsky2011sheaf, Abramsky2015}) have used sheaves and cosheaves on simplicial complexes to study \emph{contextuality}. In this situation, the vertices represent observables, the simplices represent joint measurements (measurement contexts) and the maximal faces of the complex are called \emph{maximal contexts}. The functor associates to each context a set of possible outcomes or a set of probabilities on those outcomes. \emph{Contextuality} refers to the fact that it can happen that some  sections of the probability functor (i.e. coherent collections of ``local'' probabilities) are not compatible with a globally defined probability law. In this article, we refer to this problem  as the \emph{probabilistic marginal problem}. There are also linearized versions of this problem, as well as "possibilistic" versions.

In all these examples, homology and cohomology is used to determine the "shape" of the simplicial complex or the relevant geometrical invariants of the associated sheaves.

Simplicial complexes are particularly convenient because they have a geometric realization as CW-complexes, so they can be studied using standard tools in algebraic topology e.g. standard homology and cohomology theories. Hypergraphs were introduced in combinatorics, not in geometry, hence their geometrical study is less straightforward. There have been several proposals to define (co)homological invariants of hypergraphs. A recent paper by Bressan, Li, Ren and Wu \cite{Bressan2019} defines the \emph{embedded homology} of an hypergraph $\mathcal H$, which equals the homology of the smallest abstract simplicial complex  that contains $\mathcal H$. A specific cohomology of $k$-regular hypergraphs (i.e. containing only subsets of cardinality $k$) was introduced by Chung and Graham \cite{Chung1992} motivated by some problems in combinatorics.\\

The present article develops an alternative approach, based on sheaf theory and simplicial methods. We equip the poset $\mathcal A$ with the lower or upper Alexandrov topology (see Section \ref{sec:posets_alexandroff}), obtaining the topological space $X_{\mathcal A}$ or $X^{\mathcal A}$, respectively. There is a {well-known equivalence of categories} between covariant (resp. contravariant) set-valued functors on $\mathcal A$ and sheaves on $X_{\mathcal A}$ (resp. $X^{\mathcal A}$) i.e.
\begin{equation}
[\mathcal A, \mathcal S] \cong \Sh(X_{\mathcal A}), \qquad [\mathcal A^{op}, \mathcal S] \cong \Sh(X^{\mathcal A});
\end{equation}
{in both cases the morphisms are the natural transformations.}
In other words,  we can see a (co)presheaf on $\mathcal A$ as a usual sheaf on a  topological space, with which  \v{C}ech cohomology can be used. This cohomology is convenient from a computational viewpoint and well adapted to study the global sections of the sheaf.\\

Here, we are particularly interested in the following setting, which is adapted to a wide variety of problems, as mentioned above. One introduces an hypergraph $\mathcal A$ with vertex set $I$. The elements of $I$ represent elementary observables or sources, and the elements $\alpha$ of $\mathcal A$ represent interactions or joint measurements. To take into account the internal degrees of freedom of each object of $\mathcal{A}$, one introduces a covariant set-valued functor $E:\mathcal A\to \mathcal S$ of possible outcomes, associating to each object $\alpha$ of $\mathcal A$ a set $E_\alpha$, and to each arrow $\alpha\to \beta$ a surjective map $E_\alpha\to E_\beta$. The local probabilities on each $E_\alpha$ or the functions over each $E_\alpha$ give rise to other important functors, that can be covariant or contravariant. 

 In particular, the study of the special case of real-valued functions of the probability laws on finite sets $E_\alpha$ over a simplicial complex $\mathcal{A}$ gives a natural interpretation in terms of topos theory and cohomology \cite{Artin1972} of the information quantities defined by Shannon and Kullback, or by Von Neumann in the quantum case, cf. \cite{Baudot2015,Vigneaux2020information}. These results were later extended to presheaves of functions of statistical frequencies, and to gaussian laws in Euclidean space \cite{Vigneaux2019-thesis}. The cohomologies which were used here are not of the type of \v{C}ech, they are based on the action of variables on probabilities by conditioning, expressed as non-trivial modules in the topos of presheaves over $\mathcal{A}$. A conjecture is that computing  cohomology in degrees higher than one will give entirely new information quantities.\\

{The vector spaces $\{V_\alpha\}_{\alpha\in \mathcal{A}}$ of numerical functions on the sets $E_\alpha$, and the inclusions $j_{\alpha\beta}:V_\beta \rightarrow V_\alpha$ induced by the projections $\pi_{\beta\alpha}$, whenever $\alpha\to \beta$, form a contravariant functor, which is an example of \emph{linear injective presheaf}, that is a presheaf made of vector spaces and injective maps between them, see Section \ref{sec:interaction-decomposition}. For each $\alpha \in \mathcal A$, one can introduce the \emph{boundary observables} $V_\alpha' = \sum_{\beta:\beta \subsetneq \alpha} V_\beta$ and then define the \emph{interaction subspace} as any supplement of $V_\alpha'$, in such a way that $V_\alpha = V_\alpha' \oplus S_\alpha$. We are going to prove (Theorem \ref{prop3:strong_intersection_property}) that, if $\mathcal A$ is closed under all intersections,  $V_\alpha$ equals $\bigoplus_{\beta: \beta \subset \alpha} S_\alpha$; equivalently, the functor $V$ can be written as a direct sum of subfunctors, $V = \bigoplus_{\gamma \in \mathcal A} S^\gamma$ such that $S^\gamma(\alpha) = j_{\alpha\beta}(S_\beta)$. This constitutes an example of an \emph{interaction decomposition}.}

{Provided that a general poset $\mathcal A$ satisfies an adequate finiteness condition, an interaction decomposition $V = \bigoplus_{\gamma \in \mathcal A} S^\gamma$ of an injective presheaf over an arbitrary poset $\mathcal A$ is equivalent to a sum-intersection property discovered by G. Sergeant-Perthuis, that we call \emph{condition G}:
\begin{equation}\label{G}
\forall \alpha, \beta \in \mathcal A \text{ such that }\alpha \to \beta, \quad V_{\alpha\beta} \cap \left( \sum_{\substack{\gamma:  \alpha \xrightarrow{\neq} \gamma \\ \gamma  \not\to  \beta}} V_{\alpha\gamma} \right) \subset \sum_{\substack{\gamma:   \alpha \xrightarrow{\neq} \gamma \\  \beta  \xrightarrow{\neq}\gamma }} V_{\alpha\gamma}.
\end{equation}
Here $V_{\alpha\beta} = j_{\alpha\beta} V_\beta$, whenever $\alpha \to \beta$, and  $\gamma \xrightarrow{\neq}\delta$ means that $\gamma \to \delta$ and $\gamma \neq \delta$.}

{In turn, an interaction decomposition can be rephrased in purely topological terms, through the concept of \emph{extra-fine} sheaf on a topological space,  reminiscent of the classical notion of fine sheaf, and a variant called hyper-extra-fine. Then we prove, as it was the case for fine presheaves on paracompact spaces, that hyper-extra-fine implies acyclic for the \v{C}ech cohomology (Theorem $1$) on any topological space. In Section \ref{sec:posets_alexandroff} we characterize extra-fine sheaves on the  Alexandrov spaces $X_{\mathcal A}$ or $X^{\mathcal A}$ by the property of \emph{interaction decomposition}.}

{Suppose that $I$ is a finite set. Let $\{E_i\}_{i\in I}$ be a collection of finite sets and define $E_{\beta} = \prod_{i\in \beta} E_i$, for each subset $\beta$ of $I$. The functor of probabilities $P$ (on an hypergraph $\mathcal A$ or the full abstract simplex $\Delta(I)$) associates to each $\beta$ the probability measures on $E_\beta$ and to each arrow $\alpha\to \beta$ the marginalization map $P(\alpha) \to P(\beta)$ that is precomposition by $\pi_{\beta\alpha}^{-1}$. A section of $P$ over $\mathcal A$ is a ``coherent'' collection of local probabilities, also called \emph{pseudomarginal} in the literature.  The (discrete) \emph{probabilistic marginal problem} is the following: given a pseudomarginal $p$ in $\Gamma_{\mathcal A}(P)$, when is it possible to find $q\in \Gamma_{\Delta(I)}(P)$ that restricts to $p$? The \emph{linearized} version of the problem asks for measures that may be signed, subject to a normalization condition. One can see them as elements of the copresheaf $\overline F$ on $\mathcal A$ that associates to each subset $\beta$ the space of functions $f:E_\beta \to \mathbb R$ that sum $1$, and to each arrow the marginalization map. It is the \emph{predual} of the sheaf $\overline V$, which is $V$ quotiented by the constant functions. We will see that the interaction decomposition of $\overline V$ induces an interaction decomposition of $\overline F$, which is useful to answer the linearized version of the marginal problem. In particular, Theorem \ref{thm5} proves that there is a surjection $\Gamma_{\Delta(I)}(P)\to \Gamma_{\mathcal A}(P)$, and the index formula of Theorem \ref{thm6} gives an explicit quantification of the dimension of these spaces in terms of the dimensions of the interaction subspaces and the Euler characteristic of the poset. This constitutes a novel topological reformulation of the classical results by Kellerer \cite{Kellerer1964} and Mat{\'u}{\v{s}} \cite{Matus1988}; we hope it will lead to new ideas to tackle the probabilistic version of the problem.}\\

We expect that sheaf-theoretic constructions on hypergraphs will give a better understanding of certain algorithms in Statistics or Machine learning. In this direction, Olivier Peltre (cf. \cite{Peltre2019} and his  thesis  \cite{OP-phd}) has developed a cohomological understanding of the Belief Propagation Algorithm (in the generalized version of \cite{Yedidia2005}); the algorithm appears as a non-linear dispersion flow. Higher dimensional analogs are promising tools. Grégoire Sergeant-Perthuis (cf. \cite{SergeantAround,Sergeant2019bayesian,Sergeant2019intersection,Sergeant2020presheaves} and his forthcoming thesis)
focused on defining the thermodynamical limit in the category of Markov Kernels, extending several constructions of statistics and statistical physics such as the decomposition into interaction subspaces, first introduced for factor spaces \cite{Lauritzen,Speed}, the space of Hamiltonians, infinite-volume Gibbs state, and the renormalisation group.\\
 \indent In both these works, the same result appears: the vanishing of sheaf cohomology (in the toposic form, or in \v{C}ech form respectively) in degree larger than one (i.e. acyclicity, without contractility) for the case of an injective presheaf $V$ over $\mathcal{A}$, under a certain condition relating the intersections and the sums of the subspaces given by the faces: the condition \ref{condition_G} in Section \ref{sec:interaction-decomposition}. The main goal of this article is to enunciate and prove this result, and to place
 it in a topological context.\\

  Section \ref{sec:extra-fine} defines an original notion of \emph{extra-fine} presheaf, {and a technical variant \emph{hyper-extra-fine} presheaf,} over a topological space $X$, that is reminiscent of the classical notion of fine sheaf. Then we prove, as it was the case for fine presheaves on paracompact spaces, that {hyper-}extra-fine implies acyclic for the \v{C}ech cohomology (Theorem $1$) on any topological space. In Section \ref{sec:posets_alexandroff} we characterize extra-fine sheaves on the  Alexandrov spaces $X_{\mathcal A}$ or $X^{\mathcal A}$ by the property of \emph{interaction decomposition}. {Hyper-extra-fine corresponds to the case of posets conditionally stable under finite products (for a definition, see Section \ref{sec:posets_alexandroff}).}

 In Section \ref{sec:interaction-decomposition} we consider presheaves  $V$ of $\mathbb K$ vector spaces (over any  field $\mathbb K$) and  injective maps. For such presheaves, we give an alternative characterization of extra-fineness through the \emph{sum-intersection condition} \ref{condition_G}; this is
the {first} main result of the article (Theorem \ref{thm2:extrafine_and_conditionG}). We do that without any finiteness condition on the vector spaces, and only weak finiteness conditions for the poset. Then we study duality, proving the acyclicity theorem for the {predual} cosheaves.

 Section \ref{sec:free-sheaves} contains the definition of \emph{free presheaves} generated by a covariant set-valued functor  $E$ over a commutative field $\mathbb{K}$ (the usual case in data analysis over hypergraphs). We  establish the condition \ref{condition_G}
for the injective presheaf $V$ of functions from $E$ to $\mathbb{K}$, when the poset $\mathcal A$ has conditional coproducts (meaning stable by non-empty intersections in the case of hypergraphs). {Then the acyclicity is deduced for the sheaf induced by $V$ on $X^{\mathcal{A}}$, when $\mathcal{A}$ is stable by all the finite coproducts, including the empty one} (Theorem \ref{thm3:IP_implies_extrafine}).  Then we compute the cohomology {of this sheaf} when $\mathcal{A}$ is stable by all the finite coproducts, excluding the empty one  (Theorem \ref{thm4}): it is the sum of the ordinary cohomology of $\mathcal{A}$ in all degrees, and of the cohomology of degree zero of a restricted sheaf of functions (where the sum of coordinates is zero). {This is the second main result of this article.} We also prove a
version of the marginal theorem (surjection in \v{C}ech cohomology, Theorem \ref{thm5}), which seems to be new in this generality. We deduce an index theorem for the Euler characteristic of the marginal sheaves (Theorem \ref{thm6}).

 Finally, Section \ref{sec:nerves-comparison} comes back to a general topological space $X$ and preshaves of abelian groups, to provide the homotopy equivalence of the \v{C}ech cochain complex of an open
covering of a presheaf with the cochain complex of the \emph{nerve of the category} generated by the covering; this is done for a general notion of cosimplicial coefficients
(Theorem \ref{them7}).
This answers a natural question in our framework, {and permits to identify the Cech cohomology on $X_\mathcal{A}$ and the topos cohomology induced by the global sections functor on the poset $\mathcal{A}$, provided $\mathcal A$ is conditionally closed under coproducts.  The proof is surprisingly cumbersome, which is reminiscent of the known fact that there exists
an homotopy equivalence between a finite simplicial complex and its barycentric subdivision but non-canonically.}

\indent In all the above sections we take care of morphisms between presheaves and naturality behaviors, or functoriality.

\indent Three appendices are added at the end, where we summarize the main objects and constructions involved in the article: sheaves, \v{C}ech cohomology, and M\"obius functions, among other things. {The first two are} written for people that are not familiar at all with topology.\\
\indent The main results and illustrations are in Sections \ref{sec:interaction-decomposition} and \ref{sec:free-sheaves}, about \emph{interaction decomposition}, \emph{intersection condition \ref{condition_G}}, and \emph{free sheaves}, {in a context of infinite sets}. The other sections are of more expository nature.\\

{\emph{Acknowledgments}: The authors thank warmly an anonymous referee for pointing several imprecisions in a preceding version.}

\section{Fine, extra-fine, super-local and acyclic}\label{sec:extra-fine}

In this section, we consider presheaves of abelian groups over a topological space $X$. See Appendix \ref{appendix1:topology} for some basic topological definitions and notations. We use \v{C}ech cohomology as presented in any standard reference,  e.g. \cite{Spanier1966}, but all relevant definitions can also be found in Sections \ref{appendix2:nerve} and \ref{sec:simplicial-systems} under the formalism of cosimplicial local systems.

\begin{definition}[\emph{Fine presheaf}, cf. \protect{\cite[Sec.~6.8]{Spanier1966}}]\label{def:fine} A presheaf $F$ of abelian groups over a topological space $X$ is said to be \emph{fine} if for every {locally finite} \footnote{{By definition, an open cover $\mathcal{U}$ of $X$ is locally finite when every point of $X$ has a neighborhood which meets a finite
number of elements of  $\mathcal{U}$.}} open covering $\mathcal{U}$ of $X$, there exists a family $\{e_V\}_{V\in \mathcal{U}}$ of endomorphisms of $F$ (i.e. natural transformations $e_V:F\to F$, whose components $e_V(W)$ we denote by $e_{V|W}$), such that:
\begin{enumerate}
\item[\mylabel{condition:local}{(i)}]  For all $V\in \mathcal{U}$ and every open set $ W$, one has $e_{V|W}|(W\setminus \bar V)=0$.
\item[\mylabel{condition:resolution_identity}{(ii)}] For every open $W$ that encounters only a finite number of closures $\overline{V}$ of elements $V$ of $\mathcal{U}$,  and every $x\in F(W)$, we have $x=\sum_{V\in \mathcal U} e_{V|W}(x)$.  
\end{enumerate}
{More generally, we say that a family $\{e_V\}_{V\in \mathcal{U}}$ of endomorphisms of $F$, indexed by elements of an open cover, is
a a \emph{partition of unity} (or\emph{ partition of identity}) adapted to $\mathcal{U}$, when it satisfies \ref{condition:local} and the following condition:}
\begin{enumerate}
\item[\mylabel{condition:partition_identity}{(ii')}] {For every open $W$, and every element $x$ of $F(W)$, there exists only a finite number of elements $V$ of $\mathcal{U}$,
such that  $e_{V|W}(x)\neq 0$, and we have $x=\sum_{V\in \mathcal U} e_{V|W}(x)$.}
\end{enumerate}
\end{definition}

Fine presheaves are part of the classical literature on sheaf theory, see also \cite[Sec.~3.7]{Godement1964} and \cite[p.~42]{Griffiths1978}, although the classical definitions require {that the space $X$ is paracompact}. Positive dimensional cohomology of a paracompact topological space with coefficients in a fine presheaf vanishes \cite[Thm.~6.8.4]{Spanier1966}, and this fact has important implications, {for instance} in the comparison of Alexander and \v{C}ech cohomology.  We propose here a {variant} of this notion that plays a  fundamental role in our investigations.

\begin{definition}[\emph{Extra-fine presheaf}]
A presheaf $F$ of abelian groups over the topological space $X$ is said to be \emph{extra-fine} if for
every open covering $\mathcal{V}$ of $X$, there exists a finer open covering $\mathcal{U}$ and a partition of unity $\{e_V\}_{V\in \mathcal{U}}$
adapted to $\mathcal{U}$ (i.e.
\ref{def:fine}-\ref{condition:partition_identity} is satisfied), such that
\begin{enumerate}
\item[\mylabel{condition:super-local}{(i')}] for all $V\in U$ and $W\in U$, $e_{V|W}\neq 0$ implies $W\subseteq V$;
\item[\mylabel{condition:orthogonality}{(iii)}] for all $ V, W \in \mathcal{U}$ such that $V\neq W,\quad e_V\circ e_W=e_W\circ e_V=0$.
\end{enumerate}
\end{definition}

{Remark that in this definition, the covering $\mathcal{U}$ is not required to be locally finite.}

\begin{definition}[\emph{Hyper-extra-fine presheaf}]\label{def:hyper-extra-fine}  {$F$ is \emph{hyper-extra-fine}, if for every open covering $\mathcal{V}$ of $X$, there exists a finer open covering
$\mathcal{U}$ that satisfies \ref{condition:partition_identity}, \ref{condition:super-local}, \ref{condition:orthogonality}, and that is closed by non-empty finite intersections, i.e.
for every collection of elements $U_1,...,U_n$ in $\mathcal{U}$ that have a non-empty intersection  $U_1\cap ...\cap U_n$, this intersection belongs to $\mathcal{U}$. }
\end{definition}

\begin{lemma}\label{lem:orthogonality} If a partition of unity satisfies condition \ref{condition:orthogonality}, then for all $V \in \mathcal{U}$ the equality $e_V\circ e_V=e_V$ holds.
\end{lemma}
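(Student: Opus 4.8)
The statement is the familiar fact that a complete, pairwise-orthogonal family of idempotent-like endomorphisms resolving the identity consists of genuine idempotents; here the ``completeness'' is exactly the partition-of-unity condition \ref{condition:partition_identity} and the ``orthogonality'' is \ref{condition:orthogonality}. The plan is therefore to take the resolution of the identity $\sum_{U\in\mathcal U} e_U = \id$ supplied by \ref{condition:partition_identity}, precompose it with a fixed $e_V$, and invoke \ref{condition:orthogonality} to annihilate every cross term $e_U\circ e_V$ with $U\neq V$, leaving only $e_V\circ e_V = e_V$. Note that only conditions \ref{condition:partition_identity} and \ref{condition:orthogonality} enter; neither \ref{condition:super-local} nor any local finiteness of $\mathcal U$ is needed.

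I would carry this out componentwise, since the $e_V$ are natural transformations and equality of natural transformations is tested on each section. Fix $V\in\mathcal U$, an open set $W$, and a section $x\in F(W)$, and set $y=e_{V|W}(x)\in F(W)$. Applying \ref{condition:partition_identity} to the section $y$ gives $y=\sum_{U\in\mathcal U} e_{U|W}(y)$, a sum with only finitely many nonzero summands. Each summand rewrites as $e_{U|W}\bigl(e_{V|W}(x)\bigr)$, which is precisely the $W$-component of the composite natural transformation $e_U\circ e_V$. By \ref{condition:orthogonality}, $e_U\circ e_V=0$ whenever $U\neq V$, so all these components vanish and the sum collapses to its single term $U=V$, yielding $y=e_{V|W}(y)=e_{V|W}\bigl(e_{V|W}(x)\bigr)$. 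Since $y=e_{V|W}(x)$ by definition, this says $e_{V|W}\circ e_{V|W}=e_{V|W}$ on $x$; as $x\in F(W)$ and the open set $W$ were arbitrary, the components agree everywhere and hence $e_V\circ e_V=e_V$.

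The only point demanding a little care is the legitimacy of distributing composition through the, a priori infinite, family $\{e_U\}_{U\in\mathcal U}$: this is exactly where I rely on \ref{condition:partition_identity} guaranteeing that for the fixed section $y$ only finitely many $e_{U|W}(y)$ are nonzero. Consequently there is no convergence issue and all manipulations take place inside the abelian group $F(W)$, where the finite sum behaves as usual. Beyond this bookkeeping there is no genuine obstacle; the argument is a direct consequence of combining the two hypotheses.
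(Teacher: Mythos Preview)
Your proof is correct and follows essentially the same approach as the paper: apply the resolution of the identity \ref{condition:partition_identity} and use \ref{condition:orthogonality} to kill the cross terms. The only cosmetic difference is that the paper applies \ref{condition:partition_identity} to $s$ and then composes with $e_V$ on the left (using $e_V\circ e_U=0$), whereas you apply \ref{condition:partition_identity} to $y=e_V(x)$ and use $e_U\circ e_V=0$; both are immediate from \ref{condition:orthogonality}.
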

\begin{proof}
{For any open set $W$, and any section $s\in F(W)$, we have a finite decomposition $s=\sum_{U\in \mathcal{U}} e_U(s)$, then}
\begin{equation}
{e_V(s)=\sum_U e_V\circ e_U(s)=e_V\circ e_V(s).}
\end{equation}
\end{proof}

Thus a partition of unity $\{e_V\}_{V\in \mathcal{U}}$ that satisfies  \ref{condition:orthogonality}
is a family of projections, decomposing the presheaf $F$ is a direct sum; we refer to this as a
\emph{local orthogonal decomposition} of the functor. If  \ref{condition:super-local} is also satisfied, we speak of a \emph{super-local orthogonal decomposition}.

The condition \ref{condition:super-local} for a partition of unity is  named \emph{super-locality}; it is certainly exceptional for usual topologies, but useful for the particular topologies we are interested in in this text.

\begin{remark}[Lack of functoriality]
Let $f:X\rightarrow Y$ be a continuous map and $\mathcal{F}$ a fine presheaf of abelian groups over $X$,
the presheaf $\mathcal{G}=f_*\mathcal{F}$ on $Y$ is fine \cite[Thm.~6.8.3]{Spanier1966}, but it can happen that $\mathcal{F}$ is extra-fine on $X$ and that $\mathcal{G}=f_*\mathcal{F}$ is not extra-fine. The problematic property is super-locality. For the inverse
image of a presheaf $\mathcal{G}$ over $Y$, both fine and extra-fine fail to be transmitted from $\mathcal{G}$
to $f^{-1}\mathcal{G}$.
\end{remark}

We shall see that positive dimensional \v{C}ech cohomology of an hyper extra-fine presheaf vanishes. To fix some notations, we summarize here the construction of \v{C}ech cohomology; more details can be found in Sections \ref{appendix2:nerve} and \ref{sec:simplicial-systems}.

Let $\mathcal{U}$ be an open covering of a topological space $X$, and for each $n\in \mathbb{N}$, let $K_n(\mathcal{U})$ denote the set of sequences of length $n+1$, $u=(U_0,...,U_n)$, of elements of $\mathcal{U}$ such that the intersection $U_u=U_0\cap ...\cap U_n$ is non-empty. For $n\in \mathbb{N}$, a \emph{\v{C}ech cochain} of $F$ of degree $n$ with respect to $\mathcal{U}$ is a element $\{c(u)\}_{u\in K_n(\mathcal{U})}$ of $\prod_{u\in K_n(\mathcal{U})} F(U_u)$.  The set of $n$-cochains is denoted $C^{n}(\mathcal{U};F)$; it is an abelian group.

A coboundary operator $\delta: C^{n}(\mathcal{U};F)\rightarrow C^{n+1}(\mathcal{U};F)$ is then introduced, as a linear map such that
\begin{equation}
(\delta c)(U_0,...,U_{n+1})=\sum_{i=0}^{n+1}(-1)^{i}c(U_0,...,\widehat{U_i},...,U_{n+1})|U_0\cap ...\cap U_{n+1},
\end{equation}
where $\widehat{U_i}$ means that $U_i$ is omitted.  When we want to be more precise we write $\delta=\delta^{n+1}_{n}$ at degree $n$.

It is well known that  $\delta\circ \delta=0$, which allows one to define the \v{C}ech cohomology of $F$ over $\mathcal{U}$ in degree $n$ as the quotient abelian group
$H^{n}(\mathcal{U};F)=\Ker (\delta^{n+1}_{n})/\im (\delta^{n}_{n-1})$. As explained in Appendix \ref{appendix3:cech}, the set of open coverings of $X$ with the relation of refinement
is a \emph{directed set}. And the \v{C}ech cohomology of $F$  over $X$ is defined as
\begin{equation}
\forall n\in \mathbb{N},\quad H^{n}(X;F)=\varinjlim H^{n}(\mathcal{U};F).
\end{equation}
See \cite[Ch.~5]{Godement1964}, \cite[Sec.~6.7.11]{Spanier1966} or \cite[Sec.~0.3]{Griffiths1978}.

From the definition of $\delta^{1}_0$, it is clear that the group $H^{0}(\mathcal{U};F)$ can be identified with the group of global sections of $F$
over $X$, for any open covering $\mathcal U$. Hence $H^{0}(X;F)$  coincides with every $H^{0}(\mathcal{U};F)$ and also corresponds to global sections.

A presheaf is called \emph{acyclic} if its cohomology is zero for every degree $n\geq 1$.

\begin{theorem}\label{thm1} A presheaf $F$ of abelian groups that is hyper-extra fine { is also acyclic.} More precisely, for every
open covering $\mathcal{V}$, and every integer $n\geq 1$, there exists an open covering $\mathcal{U}$ finer than $\mathcal{V}$ such that
the cohomology group $H^{n}(\mathcal{U};F)$ is zero.
\end{theorem}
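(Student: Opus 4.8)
The plan is to prove the theorem by constructing, for the refined cover $\mathcal{U}$ provided by the hyper-extra-fine hypothesis, an explicit cochain contracting homotopy, i.e. a sequence of maps $h^n : C^n(\mathcal{U};F) \to C^{n-1}(\mathcal{U};F)$ satisfying $\delta h + h \delta = \id$ in positive degrees. Given an open cover $\mathcal{V}$ and $n \geq 1$, I would first invoke Definition \ref{def:hyper-extra-fine} to obtain a finer cover $\mathcal{U}$, closed under non-empty finite intersections, carrying a partition of unity $\{e_V\}_{V\in\mathcal{U}}$ satisfying \ref{condition:partition_identity}, \ref{condition:super-local}, and \ref{condition:orthogonality}. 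By Lemma \ref{lem:orthogonality} the $e_V$ are orthogonal idempotents. The closure under finite intersections is what makes the homotopy well-defined: given $u = (U_0,\dots,U_n) \in K_n(\mathcal{U})$ and a section $c(u) \in F(U_u)$, I want to prepend a new index to $u$, and the natural candidate is the intersection $U_u = U_0 \cap \dots \cap U_n$ itself, which by hypothesis again belongs to $\mathcal{U}$.

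Concretely, I would define the homotopy using the partition of unity. For a cochain $c \in C^n(\mathcal{U};F)$ and a simplex $v = (V_0,\dots,V_{n-1}) \in K_{n-1}(\mathcal{U})$, set
\begin{equation}
(h^n c)(v) = \sum_{W \in \mathcal{U}} e_{W|V_v}\bigl( c(W,V_0,\dots,V_{n-1})\bigr),
\end{equation}
where the summand is understood to be zero unless $W \cap V_v \neq \emptyset$ so that $(W,V_0,\dots,V_{n-1}) \in K_n(\mathcal{U})$, and where $V_v = V_0 \cap \dots \cap V_{n-1}$. Condition \ref{condition:partition_identity} guarantees the sum is finite and well-defined on each section. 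The key mechanism is super-locality \ref{condition:super-local}: when we apply $e_{W|V_v}$, it vanishes unless $V_v \subseteq W$, so effectively only the terms with $W \supseteq V_v$ contribute, and on those the idempotency and orthogonality from \ref{condition:orthogonality} and Lemma \ref{lem:orthogonality} let us control the composites. I would then compute $\delta h + h \delta$ on a cochain, expand both \v{C}ech differentials via their alternating sums, and reindex so that the prepended-intersection terms telescope against the boundary terms.

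The computation of $(\delta h^n + h^{n+1}\delta)c = c$ is the technical heart. Expanding $(h^{n+1}\delta c)(u)$ for $u = (U_0,\dots,U_n)$ produces $\sum_W e_{W|U_u}((\delta c)(W,U_0,\dots,U_n))$, and $(\delta c)(W,U_0,\dots,U_n)$ splits into the term $c(U_0,\dots,U_n)$ (omitting $W$) with sign $+1$, plus the terms $\sum_i (-1)^{i+1} c(W,\dots,\widehat{U_i},\dots)$. Summing the first group against $\sum_W e_{W|U_u}$ and using the partition-of-unity identity $\sum_W e_{W|U_u} = \id$ on $F(U_u)$ recovers exactly $c(u)$; the remaining terms must cancel with $(\delta h^n c)(u)$. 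Verifying this cancellation is where super-locality and orthogonality are indispensable: I expect that in $(\delta h^n c)(u) = \sum_j (-1)^j (h^n c)(U_0,\dots,\widehat{U_j},\dots,U_n)|U_u$, the restriction to $U_u$ together with super-locality collapses the inner sum over $W$ so that each surviving term matches one in $h^{n+1}\delta$ up to sign.

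The main obstacle I anticipate is precisely this sign-and-index bookkeeping in the mixed terms: ensuring that restricting $(h^n c)(\dots \widehat{U_j} \dots)$ from $F(U_0 \cap \dots \widehat{U_j} \dots \cap U_n)$ down to $F(U_u)$ interacts correctly with the idempotents $e_W$, and that super-locality forces the inner summation index to collapse to the single relevant open set. A delicate point is that $e_W$ is a natural transformation, so it commutes with restriction maps; I would lean on this naturality to move the restriction $|U_u$ past the $e_{W}$ and to identify $e_{W|U_u}$ with a restriction of $e_{W|U_v}$, after which orthogonality \ref{condition:orthogonality} and idempotency kill or merge the crossed terms. Once the homotopy identity $\delta h + h \delta = \id$ is established on positive-degree cochains, it immediately yields $H^n(\mathcal{U};F) = 0$ for all $n \geq 1$, and since $\mathcal{U}$ refines the arbitrary cover $\mathcal{V}$, passing to the colimit over refinements gives $H^n(X;F) = 0$, proving acyclicity.
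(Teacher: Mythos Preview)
Your approach is essentially the same as the paper's: both use the partition of unity $\{e_W\}$ to ``cone off'' by prepending an open set, and both rely on super-locality plus closure under finite intersections to make the construction land in the right space. Your formula $(h^nc)(v)=\sum_W e_{W|V_v}(c(W,V_0,\dots,V_{n-1}))$ is precisely the paper's $\phi=\sum_U\phi_U$, where $\phi_U(V_0,\dots,V_{q-1})=e_U(\psi(U,V_0,\dots,V_{q-1})|V_v)$ when $V_v\subseteq U$ and $0$ otherwise.

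The one substantive difference is that the paper does \emph{not} attempt to establish a full contracting homotopy $\delta h+h\delta=\id$ on arbitrary cochains. Instead it takes a cocycle $\psi$, uses the cocycle equation first to rewrite $\psi(U_1,\dots,U_{q+1})$ (restricted to $U_u$) as an alternating sum of values $\psi(U,U_1,\dots,\widehat{U_k},\dots)$, applies $e_U$, and then recognizes this as $\delta\phi_U$. Summing over $U$ gives $\psi=\delta\phi$. This is strictly easier than what you propose: in your computation the mismatch between the index ranges $\{W\supseteq U_u\}$ in $h\delta$ and $\{W'\supseteq U_{\hat{\imath}}\}$ in $\delta h$ is exactly the obstacle you flag, and it does not obviously collapse via super-locality and orthogonality alone on a general cochain---you genuinely need the cocycle relation to make the bookkeeping close. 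So your plan is correct in spirit, but the clean route is the paper's: assume $\delta\psi=0$ from the start and show $\psi$ is exact, rather than chasing a universal homotopy identity.
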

\begin{proof}
We adapt {the} argument given by Spanier in the case of paracompact spaces \cite[Thm.~6.8.4]{Spanier1966}.

Given $\mathcal{V}$, let $\mathcal{U}$  be an open covering finer than $\mathcal V$ that satisfies  \ref{condition:super-local}, \ref{condition:partition_identity}, and \ref{condition:orthogonality}, {and is closed under finite non-empty intersections.}

Consider a cochain $\psi$ for $\mathcal{U}$ and $F$ of degree $q\geq 1$, which is a cocycle i.e. $\delta \psi = 0$. Then
for every collection $U_0, U_1,...,U_q, U_{q+1}$ of elements of $\mathcal U$, we have
\begin{multline}\label{cocycle}
\psi(U_1,...,U_{q+1})|U_0\cap ...\cap U_{q+1}\\=\sum_{k=1}^{q+1}(-1)^{k+1}\psi(U_0,...,\widehat{U_k},...,U_{q+1})|U_0\cap ...\cap U_{q+1}.
\end{multline}

Set $U_0=U$. {Remark that $U_1\cap \cdots U_{q+1}$ is also non-empty and an element of $\mathcal U$}. We deduce from \eqref{cocycle} that when $U$ contains $U_1\cap...\cap U_{q+1}$,
\begin{multline}
e_U\psi(U_1,...,U_{q+1})|U_1\cap ...\cap U_{q+1}\\=\sum_{k=1}^{q+1}(-1)^{k+1}e_U\psi(U,U_1,...,\widehat{U_k},...,U_{q+1})|U_1\cap ...\cap U_{q+1},
\end{multline}
and when $U$ does not contain $U_1\cap...\cap U_{q+1}$, the super-locality implies that
\begin{equation}
e_U(\psi(U_1,...,U_{q+1}))=0.
\end{equation}
For any $U\in \mathcal{U}$, we define a $(q-1)$-cochain $\phi_U$ for $F$ and the covering $\mathcal{U}$ as follows: given $ V_0,...,V_{q-1}\in\mathcal{U}$, if $V_0\cap ...\cap V_{q-1}\subseteq U$ then
\begin{equation}
\phi_U(V_0,...,V_{q-1})=e_U(\psi(U,V_0,...,V_{q-1})|V_0\cap ...\cap V_{q-1}),
\end{equation}
and if  $V_0\cap ...\cap V_{q-1}\nsubseteqq U$ then
\begin{equation}
\phi_U(V_0,...,V_{q-1})=0.
\end{equation}
By definition of the coboundary operator, in both cases we have
\begin{equation}
(\delta \phi_U)(U_1,...,U_{q+1})=\sum_{k=1}^{q+1}(-1)^{k+1}\phi_U(U_1,...,\widehat{U_k},...,U_{q+1})|U_1\cap ...\cap U_{q+1};
\end{equation}
which gives, when $U$ contains $U_1\cap...\cap U_{q+1}$,
\begin{equation}
(\delta \phi_U)(U_1,...,U_{q+1})=\sum_{k=1}^{q+1}(-1)^{k+1}e_U(\psi(U,U_1,...,\widehat{U_k},...,U_{q+1})|U_1\cap ...\cap U_{q+1}),
\end{equation}
and, when $U$ does not contain $U_1\cap...\cap U_{q+1}$, gives $(\delta \phi_U)(U_1,...,U_{q+1})=0$.\\
Consequently, in any case we get
\begin{equation}
\delta \phi_U(U_1,...,U_{q+1})=e_U(\psi(U_1,...,U_{q+1})).
\end{equation}
Then we define $\phi$ by summing over the open sets $U$ in $\mathcal{U}$, and using \ref{condition:partition_identity}, we obtain $\delta \phi=\psi$,
which proves the theorem.
\end{proof}

\section{Alexandrov topologies and sheaves}\label{sec:posets_alexandroff}

\subsection{Basic definitions}
A \emph{partially ordered set} (poset) is {a set} with a binary relation $\leq$ that is reflexive, antisymmetric and transitive. Equivalently, it is a small category $\mathcal A$ such that:
\begin{enumerate}
\item for any pair of objects $\alpha$, $\beta$, there is at most one morphism from $\alpha$ to $\beta$, and
\item if there is a morphism from $\alpha$ to $\beta$ and a morphism from $\beta$ to $\alpha$, then $\alpha=\beta$.
\end{enumerate}
Starting with a partially ordered set $\Ob \mathcal A$, there exists an arrow $\alpha\rightarrow\beta$ if and only if $\beta\leq\alpha$. 

The categorical coproduct between two objects $\alpha$ and $\alpha'$ of $\mathcal A$ is an object $\beta$ such that $\alpha \to \beta$ and $\alpha'\to \beta$, that additionally satisfies the following property: for any $\omega\in  \mathcal A$, if $\alpha \to \omega$ and $\alpha'\to \omega$, then $\beta \to \omega$. Such $\beta$ is denoted $\alpha \vee \alpha'$ and called  $\emph{coproduct}$ (or $\emph{sup}$) of $\alpha$ and $\alpha'$; it is unique. We shall not suppose that our categories have all finite coproducts, but sometimes we impose the following \emph{conditional existence of coproducts}: for any $\alpha, \alpha'\in  \mathcal A$, if there exists $\omega\in  \mathcal A$ such that $\alpha \to \omega$ and $\alpha'\to \omega$, then $\alpha \vee \alpha'$ exists.

The dual notion is the product $\alpha \wedge \alpha'$ of $\alpha$ and $\alpha'$, called \emph{meet}. In \cite{Vigneaux2020information}, Vigneaux introduced posets subject to conditional existence of meets under the name of \emph{conditional meet semilattices}; they are the fundamental ingredient to introduce \emph{information cohomology}.

\begin{example} Let $\mathcal K$ be an abstract simplicial complex i.e. a family of subsets of a given set $I$ such that if $\alpha\in \mathcal K$, then every subset of $\alpha$ is also in $\mathcal K$. In this structure all coproducts exist, $\alpha\vee \beta= \alpha\cap \beta$, but meets only exists conditionally.
\end{example}

 P. S. Alexandrov introduced a natural topology on the set of objects of a poset $\mathcal{A}$, given by a basis of open sets $U_\alpha=\set{\beta }{\alpha\to \beta }$, indexed  $\alpha\in  \mathcal A$.\footnote{
To justify the definition, one must verify that an intersection $U_\alpha\cap U_{\alpha'}$ is a union
of sets $U_\beta, \beta\in B$; but if $\alpha\rightarrow \beta$ and $\alpha'\rightarrow \beta$, we have $U_\beta\subseteq U_\alpha\cap U_{\alpha'}$,
then $U_\alpha\cap U_{\alpha'}=\bigcup_{\beta\in U_\alpha\cap U_{\alpha'}}U_\beta$. The same argument shows that the intersection of every family
of open sets is an open set.} We will name this topology the lower Alexandrov topology (A-topology) of $\mathcal{A}$, and denote $X_{\mathcal A}$ the topological space obtained in this way.

Dually, the upper sets $U^{\beta}=\set{\alpha}{\alpha\to \beta}$, indexed by objects $\beta\in \mathcal A$,  form the basis
of a topology that we call upper A-topology of $\mathcal{A}$. The corresponding topological space is denoted $X^{\mathcal A}$. Clearly, it is the lower A-topology of the opposite category $\mathcal{A}^{op}$.

Remark that if $\alpha\rightarrow\beta$ then $U_\alpha\supseteq U_\beta$ and $U^{\alpha}\subseteq U^{\beta}$. Also, whenever  $\mathcal A$ possesses conditional coproducts and $U_\alpha\cap U_{\alpha'}$ is non-empty, one has $U_\alpha\cap U_{\alpha'}=U_{\alpha\vee \alpha'}$; the element in $U_\alpha\cap U_{\alpha'}$ is a common upper bound of $\alpha$ and $\alpha'$.

A general reference for Alexandrov spaces, finite topological spaces, and their relations to simplical complexes is \cite{Barmak2011}. For instance, the reader can find there the following result.

\begin{lemma}[\protect{\cite[Prop.~1.2.1]{Barmak2011}}] Let $\mathcal A, \mathcal B$ be posets. A map $f:\Ob \mathcal A \to \Ob \mathcal B$ is order preserving (equivalently, defines a covariant functor from  $\mathcal A$ to  $\mathcal B$) if and only if $f$ is continuous for the lower (or upper) A-topology.
\end{lemma}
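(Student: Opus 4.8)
The plan is to check continuity on a basis and to exploit the fact that, in the lower A-topology, the open sets are precisely the down-sets of the order $\leq$. First I would record two elementary facts. Each basic open $U_\alpha=\set{\beta}{\beta\leq\alpha}$ is the principal down-set of $\alpha$; consequently \emph{(i)} an arbitrary subset is open if and only if it is a down-set, since a down-set $S$ equals $\bigcup_{\alpha\in S}U_\alpha$ and, conversely, any union of basic opens is a down-set, and \emph{(ii)} $U_\alpha$ is the minimal open set containing $\alpha$ (any open set containing $\alpha$ contains some $U_\gamma$ with $\alpha\leq\gamma$, whence $U_\alpha\subseteq U_\gamma$). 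These reduce the whole statement to a manipulation of down-sets.

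For the forward implication, assume $f$ is order preserving. Since continuity may be tested on the basis $\{U_\beta\}_{\beta\in\mathcal B}$, it suffices to show that each preimage $f^{-1}(U_\beta)=\set{\alpha}{f(\alpha)\leq\beta}$ is open. If $\alpha$ lies in this set and $\alpha'\leq\alpha$, then $f(\alpha')\leq f(\alpha)\leq\beta$ by order preservation, so $\alpha'$ lies in it too; hence $f^{-1}(U_\beta)$ is a down-set and therefore open by \emph{(i)}. Thus $f$ is continuous.

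For the converse, assume $f$ is continuous and take $\alpha\leq\alpha'$. The set $f^{-1}(U_{f(\alpha')})$ is open and contains $\alpha'$, so being a down-set by \emph{(i)} (equivalently, by minimality \emph{(ii)} it contains $U_{\alpha'}\ni\alpha$) it contains $\alpha$; therefore $f(\alpha)\in U_{f(\alpha')}$, i.e.\ $f(\alpha)\leq f(\alpha')$, and $f$ is order preserving. The assertion for the upper A-topology then follows by duality: $X^{\mathcal A}$ is the lower A-topology of $\mathcal A^{op}$, and $f$ is order preserving as a map $\Ob\mathcal A\to\Ob\mathcal B$ precisely when it is order preserving as a map $\Ob\mathcal A^{op}\to\Ob\mathcal B^{op}$, so the lower case applies verbatim.

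The argument is essentially routine, and I do not expect a genuine obstacle. The only point requiring care is to keep the conventions straight — arrows $\alpha\to\beta$ encode $\beta\leq\alpha$, so each $U_\alpha$ is a down-set and covariant functoriality coincides exactly with order preservation — together with the standard reminder that continuity need only be verified on basic open sets.
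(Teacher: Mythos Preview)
Your proof is correct and is exactly the standard elementary argument one would expect. The paper itself does not supply a proof of this lemma at all; it merely quotes the statement from \cite[Prop.~1.2.1]{Barmak2011} as a known fact, so there is nothing to compare against beyond noting that your argument is the natural one and that you have handled the paper's conventions (arrows $\alpha\to\beta$ meaning $\beta\leq\alpha$, hence $U_\alpha$ a principal down-set) correctly.
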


It is a classical result, corollary of a more general result of Grothendieck \cite{Artin1972}, that the functors on a poset $\mathcal{A}$ can be seen as classical sheaves on the associated topological space. This equivalence is an easy consequence of the Comparison lemma, {cf.\cite[Thm.~1.1.8]{Caramello2019theories}, a restatement of the original \cite[Thm.~III.4.1]{Artin1972-2}}. However, we give {just below} a simple explicit proof.

\begin{proposition}\label{prop:functors_are_sheaves}
 Every covariant functor $F$ from $\mathcal{A}$ to the category of sets, can
be extended to a sheaf on $X_{\mathcal A}$, and this extension is unique.
\end{proposition}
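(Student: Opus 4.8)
The plan is to exhibit the unique possible candidate for the extension, verify directly that it is a sheaf, and then argue that the sheaf axioms leave no other choice. Everything rests on two elementary features of the lower Alexandrov topology, which I would record first: the open sets of $X_{\mathcal A}$ are exactly the down-sets of $\mathcal A$ (sets $W$ with $\beta \in W$, $\beta \to \gamma \Rightarrow \gamma \in W$), and $U_\beta$ is the smallest open set containing the point $\beta$. Hence every open $W$ is covered by its basic opens, $W = \bigcup_{\beta \in W} U_\beta$, and $\alpha \to \beta$ forces $U_\alpha \supseteq U_\beta$, matching the variance of the covariant functor $F$.

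For the construction I would set, for each open $W$,
\[
\tilde F(W) = \Bigl\{ (s_\beta)_{\beta \in W} \in \textstyle\prod_{\beta \in W} F(\beta) \;\Bigm|\; F(\beta \to \gamma)(s_\beta) = s_\gamma \text{ whenever } \beta,\gamma \in W,\ \beta \to \gamma \Bigr\},
\]
that is, the projective limit $\varprojlim_{\beta \in W} F(\beta)$ over the subposet $W$, with restrictions $\tilde F(W) \to \tilde F(W')$ (for $W' \subseteq W$) given by discarding the coordinates outside $W'$. This is patently a presheaf. To see it extends $F$, observe that $\alpha$ is a top element of the subposet $U_\alpha$, so by functoriality the projection $(s_\beta) \mapsto s_\alpha$ is a bijection $\tilde F(U_\alpha) \xrightarrow{\sim} F(\alpha)$, under which the restriction $\tilde F(U_\alpha) \to \tilde F(U_\beta)$ becomes $F(\alpha \to \beta)$.

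Checking the sheaf axiom should be routine with this description. Given an open cover $W = \bigcup_i V_i$ and sections $t_i = (s^i_\beta)_{\beta \in V_i}$ that agree on the overlaps (i.e. $s^i_\beta = s^j_\beta$ for $\beta \in V_i \cap V_j$), I would put $s_\beta = s^i_\beta$ for any $i$ with $\beta \in V_i$, well defined by agreement. Since each $V_i$ is a down-set, $\beta \in V_i$ together with $\beta \to \gamma$ forces $\gamma \in V_i$, so the compatibility of $(s_\beta)_{\beta \in W}$ reduces to that of the individual $t_i$; thus $(s_\beta)\in\tilde F(W)$ is the required, and visibly unique, gluing. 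Hence $\tilde F$ is a sheaf extending $F$.

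For uniqueness, let $G$ be any sheaf on $X_{\mathcal A}$ restricting to $F$ on the basic opens compatibly with restrictions. For open $W$, restriction along the cover $\{U_\beta\}_{\beta \in W}$ gives $G(W) \to \prod_{\beta \in W} G(U_\beta) = \prod_{\beta \in W} F(\beta)$, which is injective by separation, and whose image lands in $\tilde F(W)$ because $U_\gamma \subseteq U_\beta$ whenever $\beta \to \gamma$. The one genuinely delicate point — the main obstacle — is surjectivity onto $\tilde F(W)$, i.e. showing that an order-compatible family is a matching family in the sheaf-theoretic sense, agreeing on the pairwise intersections $U_\beta \cap U_{\beta'}$ and not merely along comparabilities. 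Here the Alexandrov structure is exactly what is needed: $U_\beta \cap U_{\beta'}$ is the open set of common lower bounds, covered by $\{U_\gamma : \gamma \in U_\beta \cap U_{\beta'}\}$, and on each such $U_\gamma$ both sections restrict to $F(\beta \to \gamma)(s_\beta) = s_\gamma = F(\beta' \to \gamma)(s_{\beta'})$; separation of $G$ over this cover then yields agreement on $U_\beta \cap U_{\beta'}$. The gluing axiom for $G$ now produces a preimage, giving a natural isomorphism $G(W) \cong \tilde F(W)$ and hence $G \cong \tilde F$, which proves uniqueness.
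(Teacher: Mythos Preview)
Your proof is correct and follows essentially the same strategy as the paper's: define the extension on an arbitrary open $W$ as the projective limit $\varprojlim_{\beta\in W}F(\beta)$, verify the sheaf axioms directly, and argue uniqueness from the fact that any sheaf is determined on an open by its values on a basic cover. The paper's proof is terser---its uniqueness argument is the single line ``we must have $F(U)=\varprojlim_{\alpha\in U}F(\alpha)$''---whereas you spell out the one point that actually deserves care, namely that an \emph{order}-compatible family $(s_\beta)$ is automatically a sheaf-theoretic matching family on the cover $\{U_\beta\}$, using separation of $G$ over the cover of $U_\beta\cap U_{\beta'}$ by the $U_\gamma$ with $\gamma$ a common lower bound; this is exactly the content of the paper's phrase ``for any element $\beta$ in $U_\alpha\cap U_{\alpha'}$, the images of $s_\alpha$ and $s_{\alpha'}$ in $F(\beta)$ coincide.''
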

\begin{proof}
Let $F$ be a covariant functor on $\mathcal A$. Suppose that $F$ extends to a sheaf $F$ on $X_\mathcal{A}$. For any open set  $U=\bigcup_{\alpha\in U} U_\alpha$,  we must have $F(U)=\varprojlim_{\alpha \in U} F(\alpha)$, that we identify with the set of collections $(s_\alpha)_{\alpha \in U}$, with $s_\alpha \in F(\alpha)$, such that for any pair $\alpha,\alpha'$ in $U$ and any element $\beta$ in $U_\alpha\cap U_{\alpha'}$,
the images of $s_\alpha$ and $s_{\alpha'}$ in $F(\beta)$ coincide (``coherent collection''). This proves the uniqueness of the extension. And in any case, this formula defines a presheaf $F$ on $X_{\mathcal A}$ i.e. for the lower A-topology. Let us verify that $F$ is a sheaf. First, let $\mathcal{U}$ be a covering of an open  $U$, and $s,s'$  two elements of $F(U)$ such that $s|_V = s'|_V$ for all $V\in \mathcal{U}$; in this case, for each $\alpha\in U$, the components $s_\alpha$ and $s'_\alpha$ (in $F(\alpha)$) of $s$ and $s'$ are necessarily the same, so $s=s'$. Concerning the second axiom  of a sheaf, suppose that a collection $s_V$ is defined for $V\in \mathcal{U}$, and that $s_V|_{V\cap W} = s_W|_{V\cap W}$ whenever $V,W\in \mathcal U$ have nonempty intersection, then by restriction to the $U_\alpha$
for $\alpha\in U$ we get a coherent section over $U$. This proves the existence of the extension.
\end{proof}

Neither the (conditional) existence of coproducts or products nor any finiteness hypothesis are used in the previous proof. A similar proposition holds for the upper topology, but in this case the sheaves are in correspondence with contravariant functors on $\mathcal A$ (i.e. presheaves on $\mathcal A$).\\

\noindent \begin{remark}\label{categoryequivalence} The construction in the above proposition establishes an equivalence of the category
$\operatorname{PSh}(\mathcal{A})$
of presheaves over $\mathcal{A}$, having for morphisms the natural transformations,  with the category of sheaves over $X_\mathcal{A}$,
having for morphisms the natural transformations. In fact these two categories are isomorphic toposes cf. {\cite[Prop.~4.1]{Caramello2011Stone},} \cite{Artin1972-2}, \cite{Moerdijk2006}.\\
\end{remark}

\begin{remark}[Functoriality]
In the case of posets and their associated Alexandrov topologies, the direct images and inverse images of sheaves (or presheaves)
are easy to handle.

Let $f:\mathcal{A}\rightarrow \mathcal{B}$ be a morphism of posets, i.e. an increasing map; $f$ is continuous for the lower and the upper
topologies.

\indent If $G$ is a sheaf of sets on $\mathcal{B}$ for the lower $A$-topology, its inverse image is defined at the level of germs of sections by the formula
$(f^{*}G)(\alpha)=G(f(\alpha))$, which gives the stack in $\alpha$.\\
\indent If $F$ is a sheaf of sets on $\mathcal{A}$ for the lower $A$-topology; its direct image is defined by $(f_{*}F)(\beta)=(f_{*}F)(U_\beta)=(f^{-1}F)(U_\beta)=
F(f^{-1}(U_\beta))$, where the open set $f^{-1}(U_\beta)$ is the set of elements $\alpha\in \mathcal{A}$ such that $f(\alpha)\subseteq \beta$.

If $\beta$ does not meet the image of $f$, this is the empty set. For sheaves of abelian groups, and $\beta$ non-intersecting $f(\mathcal{A})$, we have $f_*F(\beta)=0_\beta$. For instance, if $\mathcal{A}$ is a
sub-poset of $\mathcal{B}$, and $J$ the injection: $J_*F$ coincides with $F$ on $\mathcal{A}$ and is zero in its complement.\\
\indent Analog results hold true for the upper topology and for the contravariant functors on $\mathcal{A}$ and $\mathcal{B}$.\\
\end{remark}

\subsection{Extra-fine presheaves on posets}\label{sec:extrafine_posets}

Consider a poset $\mathcal A$, and the induced topological space $X_{\mathcal A}$ whose underlying set is $\Ob \mathcal A$, equipped with the lower A-topology. Let us denote by $\mathcal{U}_{\mathcal{A}}$ the  covering of $X_\mathcal A$ by the open sets $U_\alpha,\alpha\in\mathcal{A}$.

By definition of the lower A-topology, $\mathcal{U}_{\mathcal{A}}$ refines any other open covering. So by taking the injective limit (i.e. the colimit) on the category of coverings pre-ordered by refinement (cf. Appendix \ref{appendix3:cech}), Theorem $2.5$ implies that if $F$ is an {hyper-extra-fine} sheaf on $X_\mathcal A$, for any $n\geq 1$ we have $H^{n}(X;F)=0$.\\

Due to the maximality of the open covering $\mathcal{U}_\mathcal{A}$, the existence of a super-local orthogonal decomposition for $F$ subordinated to $\mathcal U_\mathcal A$ implies that $F$ is extra-fine.

However, in general, $\mathcal{U}_{\mathcal{A}}$ is not the only finest open covering of $\mathcal{A}$. {The relation of refinement only defines a pre-order.}

Therefore it can {easily} happen that a sheaf $F$ is extra-fine, but that $F$ is not super-local for $\mathcal{U}_{\mathcal{A}}$.

In the applications we have in mind, described in the introduction and Section \ref{sec:free-sheaves}, the covering $\mathcal{U}_{\mathcal{A}}$ is super-local for the sheaf $F$; in this case we say that
$F$ over $X_\mathcal{A}$ is \emph{canonically extra-fine}. This property implies that $F$ is extra-fine, because every open covering is less fine than $\mathcal{U}_{\mathcal{A}}$.

\begin{remark}\label{coproduct_equiv-hyper-extra} When $\mathcal{A}$ is stable by conditional finite coproducts, every finite non-empty intersection of elements of $\mathcal{U}_\mathcal{A}$ is itself an element of $\mathcal{U}_\mathcal{A}$. Therefore, if a sheaf $F$ over $X_{\mathcal{A}}$ is canonically extra-fine, then it is hyper-extra-fine, by Definition \ref{def:hyper-extra-fine}.
\end{remark}

\indent If $F$ is canonically extra-fine, there is a super-local orthogonal decomposition $\{e_\alpha\}_{\alpha\in \mathcal{A}}$ associated to the covering $\mathcal U_A$.
From the axioms, the images $S_\alpha = \im e_\alpha$ define sub-sheaves of $F$ such that $F=\bigoplus_{\alpha\in \mathcal{A}}S_\alpha$.\\
Moreover, for any open set $U$ of $X_\mathcal A$ , $e_{\alpha}(U)$ is the projection
on $S_{\alpha}(U)$ parallel to  $\bigoplus_{\beta:\beta \neq \gamma} S_{\beta}(U)$.\\
We will see the relation with the \emph{interaction decomposition} in the next section.\\

\noindent This implies the following result.

\begin{proposition}\label{grouphzero}  Let $F$ be a canonically extra-fine sheaf over $X_{\mathcal A}$, where  $X_{\mathcal A}$ denotes the topological space defined by a poset $\mathcal A$ equipped with its lower A-topology. Then,
$$H^{0}(X_{\mathcal A};F)=\bigoplus_{\alpha\in {\mathcal{A}}}H^{0}(\mathcal{U}_{\mathcal{A}};S_\alpha).$$
\end{proposition}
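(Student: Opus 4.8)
The plan is to reduce both sides of the claimed equality to groups of global sections and then to exhibit an explicit splitting coming from the projections $\{e_\alpha\}_{\alpha\in\mathcal A}$. As recalled just before the statement of Theorem \ref{thm1}, for any sheaf the group $H^0$ computed with respect to any covering is canonically identified with the group of global sections; hence $H^0(X_{\mathcal A};F)=F(X_{\mathcal A})$ and, for each $\alpha$, $H^0(\mathcal U_{\mathcal A};S_\alpha)=S_\alpha(X_{\mathcal A})$. So the whole statement amounts to the isomorphism of abelian groups $F(X_{\mathcal A})\cong \bigoplus_{\alpha\in\mathcal A}S_\alpha(X_{\mathcal A})$.

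First I would check that each $S_\alpha=\im e_\alpha$ is indeed a subsheaf, so that the identification $H^0(\mathcal U_{\mathcal A};S_\alpha)=S_\alpha(X_{\mathcal A})$ is legitimate. Since $e_\alpha$ is an idempotent natural transformation (Lemma \ref{lem:orthogonality}), its kernel and image split $F$ as a presheaf, $F=\ker e_\alpha\oplus \im e_\alpha$; a direct summand (as a presheaf) of a sheaf satisfies the gluing axioms componentwise, and is therefore itself a sheaf.

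Next, for an arbitrary open set $W$ I would define $\Phi_W\colon F(W)\to \prod_{\alpha\in\mathcal A}S_\alpha(W)$ by $\Phi_W(x)=(e_\alpha(x))_{\alpha\in\mathcal A}$. By condition \ref{condition:partition_identity}, for each $x$ only finitely many components $e_\alpha(x)$ are nonzero, so $\Phi_W$ in fact lands in the direct sum $\bigoplus_\alpha S_\alpha(W)$, and $x=\sum_\alpha e_\alpha(x)$. Injectivity is then immediate: if all $e_\alpha(x)$ vanish, so does $x$. For surjectivity, given $(s_\alpha)_\alpha$ with finite support and $s_\alpha\in \im e_\alpha$, idempotence gives $e_\alpha(s_\alpha)=s_\alpha$, while orthogonality (condition \ref{condition:orthogonality}) gives $e_\beta(s_\alpha)=0$ for $\beta\neq\alpha$; setting $x=\sum_\alpha s_\alpha$ (a finite sum) yields $e_\beta(x)=s_\beta$ for every $\beta$, i.e. $\Phi_W(x)=(s_\alpha)_\alpha$. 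Thus $\Phi_W$ is an isomorphism $F(W)\cong\bigoplus_\alpha S_\alpha(W)$, natural in $W$. Taking $W=X_{\mathcal A}$ and combining with the two $H^0$ identifications concludes the argument.

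The proof is essentially formal once these building blocks are in place; the only point genuinely requiring attention, and the main obstacle, is the finiteness of the support of $(e_\alpha(x))_\alpha$, since the covering $\mathcal U_{\mathcal A}$ is not assumed to be locally finite. This is exactly what condition \ref{condition:partition_identity} guarantees, and it is what allows the \emph{direct sum} $\bigoplus_\alpha$ appearing in the statement rather than the direct product; without it the global-sections functor would fail to commute with the decomposition.
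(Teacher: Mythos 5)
Your proof is correct and follows essentially the same route as the paper's: both identify $H^{0}$ with global sections and use the idempotent, orthogonal, pointwise-finite family $\{e_\alpha\}$ to split $F$ as the direct sum of the subsheaves $S_\alpha = \im e_\alpha$. Your write-up simply makes explicit the details that the paper compresses into the discussion preceding the proposition and its three-line proof, namely that each $S_\alpha$ is itself a sheaf and that the map $\Phi_W(x)=(e_\alpha(x))_\alpha$ gives the natural isomorphism $F(W)\cong\bigoplus_\alpha S_\alpha(W)$.
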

\begin{proof}
Recall that $H^0(X;F) = H^0(\mathcal U; F)$ for any open covering. The naturality of the $e_\alpha$ implies that $S_\alpha(\beta)$ is mapped to $S_\alpha(\gamma)$ by the map $F\iota$ induced by $\iota:\beta\to \gamma$. Hence the set of global sections of $F$ can be computed as the direct sum of sections of $S_\alpha$. 
\end{proof}

\begin{remark} In the above results, the groups $F(\alpha)=F(U_\alpha)$, for $\alpha\in \mathcal{A}$, or $F(U)$, for $U\in \mathcal{U}$, are not supposed finitely generated.
This is a good point because, starting with a covariant functor $F$ of finitely generated abelian groups over $\mathcal{A}$, it can happen that the sheaf extending $F$ to $X_\mathcal{A}$
in Proposition $1$  is not made of finitely generated abelian groups.
\end{remark}

\subsection{Finiteness conditions}\label{remark:finiteness_conditions}
In what follows we will sometimes consider posets $\mathcal{A}$ that satisfy some finiteness condition.

 We say that $\mathcal A$ is \emph{locally finite}, if for every arrow $\alpha\rightarrow\beta$, the intersection $U^{\beta}_\alpha=U_\alpha\cap U^{\beta}$ is finite. In other terms, there exist only a finite number of chains without repetition beginning at $\alpha$ and ending at $\beta$.

\indent A stronger property is (lower) \emph{closure finite}, meaning that every $U_\alpha$ is finite. This is the case for the poset associated to a CW complex  (cf. \cite{Whitehead1949}).

 A more convenient condition for us will be the hypothesis of \emph{locally finite dimension}:  {for every object $\alpha $ of $\mathcal A$, the non-degenerate chains $\alpha \xrightarrow{\neq} \beta_1\xrightarrow{\neq} \beta_2 \xrightarrow{\neq} \cdots $  are all finite and their length is uniformly bounded; the least upper bound is called the \emph{dimension} of $\alpha$.}

Note that the conditions of local finiteness is self-dual, i.e. it holds for $\mathcal{A}$ if and only its hold for $\mathcal{A}^{op}$. This is not the case for closure finiteness  or locally finite dimension. The posets $\mathcal{A}$ and $\mathcal{A}^{op}$ are both closure finite if and only if they are finite.
The posets $\mathcal{A}$ and $\mathcal{A}^{op}$ are both of locally finite dimension if and only if there exists a number $d$ such that any sequence $\alpha\rightarrow...\rightarrow\beta$
of length bigger than $d+1$ has a repetition; in this case we say that $\mathcal{A}$ has finite dimension, or finite depth.

\indent The most elegant finiteness condition is Lower Well Foundedness: there exists no infinite chain without repetition (cf. \cite{Sergeant2019intersection}).

In the case of finite posets and sheaves of finitely generated abelian groups, we can assert that the cohomology is finitely
generated.

\section{Interaction decomposition}\label{sec:interaction-decomposition}

\subsection{Condition G and the equivalence theorem}
Let $\mathcal A$ be an arbitrary poset, and let $V$ be a contravariant functor on $\mathcal{A}$, valued in the category of vector spaces over a commutative field $\mathbb{K}$. We suppose that for each $\rho:\alpha \to \beta$ in $\mathcal A$, the map $j_{\alpha\beta}= V(\rho):V(\beta)\to V(\alpha)$ is injective. We call $V$ a \emph{linear injective presheaf}, {or simply an injective presheaf.}

To get a sheaf on a topological space from $V$, we must consider the upper A-topology and not the lower one, because $U^{\beta}\supseteq U^{\alpha}$ whenever $\alpha\rightarrow\beta$. In what follows we denote by $X^{\mathcal{A}}$ the set $\Ob\mathcal{A}$ equipped with the upper A-topology.

We write $V_{\alpha\beta}$ instead of $j_{\alpha\beta}(V_\beta)$. For a partition of unity associated to $V$, if it exists, $e_{\alpha|\beta}$ is an endomorphism of $V_\beta$.


\begin{definition}\label{formulainteractiondecomposition}
An \emph{interaction decomposition} of an injective presheaf $V$ is a family of vector sub-spaces $S_\gamma$ of $ V_\gamma$, indexed by $\gamma\in \mathcal{A}$,
such that
\begin{equation}
\forall \alpha \in \mathcal{A},\quad V_\alpha=\bigoplus_{{\alpha \to \beta}} j_{\alpha\beta}S_\beta.
\end{equation}
\end{definition}

Let us introduce, for every $\gamma\in \mathcal{A}$, the vector space
\begin{equation}
\forall \alpha \in \mathcal{A}, \quad  S^\gamma(\alpha) = \begin{cases}S_{\alpha\gamma}=j_{\alpha\gamma}S_\gamma &\text{if }\alpha \to \gamma \\
0 & \text{if } \alpha \nrightarrow\gamma\end{cases},
\end{equation}
this defines a presheaf $\mathcal{S}^{\gamma}$ on $\mathcal{A}$.
The interaction decomposition corresponds to a decomposition of the presheaf $V$:
\begin{equation}
V=\bigoplus_{\gamma \in \mathcal{A}}\mathcal{S}^{\gamma}.
\end{equation}

The name \emph{interaction decomposition} comes from Statistical Physics, where the spaces $\{V_\alpha\}_{\alpha\in \mathcal A}$ are spaces of functions depending on local variables over a lattice. An important old example corresponds to  Wick's theorem, used in remormalization theory and Wiener analysis; a particular case is the decomposition of functions in sum of Bernoulli polynomials or Hermite polynomials, cf. Sinai's \emph{Theory of Phase Transition: rigorous results} \cite{sinai2014theory}. The notion of interaction decomposition also plays a fundamental role in other domains of Probability and Statistics, cf. \cite{Lauritzen}.\\

{Suppose that $F$ is a canonically extra-fine sheaf on $X^{\mathcal{A}}$ in $\mathbb{K}$-vector spaces and that for every $\alpha\rightarrow\beta$ in $\mathcal A$, the linear map
$F_\beta\rightarrow F_\alpha$ is injective; then the beginning of Section \ref{sec:extrafine_posets} proves that the induced presheaf on $\mathcal{A}$ has a
natural interaction decomposition. Under a fairly strong finiteness hypothesis, the reciprocal statement is true, as shown by the following lemma.
}

\begin{definition}\label{conditiona}
We say that $\mathcal{A}$ is \emph{lower finitely covered}, or satisfies the condition $(a)$, if it has a finite subset $\mathcal{A}_{in}$ such that, for
any object $\beta\in \mathcal{A}$, there exists an object $\alpha\in \mathcal{A}_{in}$ with $\alpha\rightarrow\beta$.
\end{definition}

This condition is verified, {for instance, if} $\mathcal{A}$ possesses an initial object or if it is a finite poset.

\begin{lemma}\label{lem:ID_gives_extrafine}
{Assume that $\mathcal{A}$ satisfies $(a)$. Let $\{S_\gamma\}_{\gamma \in \mathcal{A}}$ be an interaction decomposition of an injective presheaf $V$, and let $F_V$ be the associated sheaf over $X^{\mathcal{A}}$; then $F_V$ is canonically extra-fine.  More precisely, $F_V$ is isomorphic to the direct sum of the sheaves $F_{S^{\alpha}}$ for $\alpha\in \mathcal{A}$. The family of projections onto $S_{\alpha\beta}$ parallel to $\bigoplus_{\beta':\beta\neq \beta'} S_{\alpha\beta'}$, extends to the partition of unity $\{e_\beta\}_{\beta\in \mathcal {A}}$, which over any open set $W$ is the projection on $F_{S^{\beta}}(W)$ parallel to  $\bigoplus_{\beta':\beta\neq \beta'} F_{S^{\beta'}}(W)$.}
\end{lemma}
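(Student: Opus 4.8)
The plan is to construct the projections at the level of the presheaf $V$ and then transport them to the sheaf $F_V$ through the equivalence of categories between presheaves on $\mathcal{A}$ and sheaves on $X^{\mathcal A}$ (Remark \ref{categoryequivalence}). First I would record that the interaction decomposition is exactly a direct sum decomposition of presheaves $V = \bigoplus_{\gamma \in \mathcal A} S^\gamma$, each $S^\gamma$ being a subpresheaf: functoriality $j_{\alpha\alpha'}j_{\alpha'\gamma} = j_{\alpha\gamma}$ shows that $j_{\alpha\alpha'}$ carries $S^\gamma(\alpha') = j_{\alpha'\gamma}S_\gamma$ into $S^\gamma(\alpha) = j_{\alpha\gamma}S_\gamma$. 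For each $\beta$, let $e_\beta(\alpha)\colon V_\alpha \to V_\alpha$ be the projection onto $S_{\alpha\beta}$ parallel to $\bigoplus_{\beta'\neq\beta} S_{\alpha\beta'}$. Naturality is automatic: decomposing $v \in V_{\alpha'}$ and applying $j_{\alpha\alpha'}$ termwise yields, by uniqueness of the decomposition in $V_\alpha$, the identity $j_{\alpha\alpha'}e_\beta(\alpha') = e_\beta(\alpha)j_{\alpha\alpha'}$. Hence each $e_\beta$ is a presheaf endomorphism of $V$, which extends to an endomorphism of $F_V$ acting componentwise on coherent collections; on the basic open $U^\gamma$ it is $e_\beta(\gamma)$.

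Conditions \ref{condition:super-local} and \ref{condition:orthogonality} then follow formally from the direct-sum structure. For super-locality, $e_\beta(\gamma)\neq 0$ forces $S^\beta(\gamma)\neq 0$, hence $\gamma \to \beta$ and therefore $U^\gamma \subseteq U^\beta$; so the component of $e_\beta$ on $U^\gamma$ vanishes unless $U^\gamma \subseteq U^\beta$. Orthogonality is the assertion that $e_\beta(\alpha)$ and $e_{\beta'}(\alpha)$ are orthogonal idempotents for $\beta\neq\beta'$, which holds pointwise in $V_\alpha = \bigoplus_\gamma S^\gamma(\alpha)$ and survives the extension to $F_V$.

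The heart of the argument, and the step where both hypotheses are genuinely used, is the partition-of-unity condition \ref{condition:partition_identity}: for every open $W$ and every $x = (s_\alpha)_{\alpha\in W}\in F_V(W)$, only finitely many $e_\beta(W)(x)$ are nonzero, with $x = \sum_\beta e_\beta(W)(x)$. The equality is immediate once each $s_\alpha$ is decomposed as the finite sum $s_\alpha = \sum_\beta s_\alpha^\beta$ (finite because $V_\alpha$ is an internal direct sum), so I would concentrate on finiteness of the support $\{\beta : \exists\, \alpha\in W,\ s_\alpha^\beta \neq 0\}$. Here I would exploit that $W$ is upward closed together with condition $(a)$ (Definition \ref{conditiona}): given $\alpha\in W$, choose $\alpha_i$ in the finite set $\mathcal A_{in}$ with $\alpha_i\to\alpha$; since $W$ is an up-set, $\alpha_i\in W$. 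Coherence applied to $\alpha$ and $\alpha_i$, evaluated at $\alpha_i\in U^\alpha\cap U^{\alpha_i}$, gives $s_{\alpha_i} = j_{\alpha_i\alpha}(s_\alpha)$, whence $s_{\alpha_i}^\beta = j_{\alpha_i\alpha}(s_\alpha^\beta)$ by the subpresheaf property; because $V$ is an \emph{injective} presheaf, $j_{\alpha_i\alpha}$ is injective, so $s_\alpha^\beta\neq 0$ implies $s_{\alpha_i}^\beta\neq 0$. The support is thus contained in $\bigcup_{\alpha_i\in \mathcal A_{in}} \operatorname{supp}(s_{\alpha_i})$, a finite union of finite sets. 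This is the main obstacle: the finiteness of $\mathcal A_{in}$ and the injectivity of the transition maps are both indispensable, since either failing would permit infinite support and break the partition of unity.

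Finally I would assemble the conclusion. The family $\{e_\beta\}_{\beta\in\mathcal A}$ is a super-local orthogonal partition of unity subordinate to $\mathcal U_{\mathcal A}$, so $F_V$ is canonically extra-fine. The same finite-support computation shows that the termwise decomposition defines a natural map $F_V(W)\to\bigoplus_\beta F_{S^\beta}(W)$ (each $(s_\alpha^\beta)_\alpha$ is a coherent section of $S^\beta$ by the identical uniqueness-of-decomposition reasoning), which is visibly injective and surjective, hence an isomorphism of sheaves $F_V \cong \bigoplus_{\beta\in\mathcal A} F_{S^\beta}$; by construction $e_\beta(W)$ is precisely the projection onto $F_{S^\beta}(W)$ parallel to $\bigoplus_{\beta'\neq\beta} F_{S^{\beta'}}(W)$.
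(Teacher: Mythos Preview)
Your proof is correct and follows essentially the same route as the paper's: verify \ref{condition:super-local} and \ref{condition:orthogonality} directly from the direct-sum structure, and for \ref{condition:partition_identity} use condition $(a)$ together with injectivity of the $j_{\alpha_i\alpha}$ to bound the support of a section over $W$ by the (finite) supports at finitely many elements of $\mathcal{A}_{in}$. Your treatment of \ref{condition:partition_identity} is in fact slightly more explicit than the paper's, since you observe and use that the relevant elements of $\mathcal{A}_{in}$ actually lie in $W$ (because $W$ is closed under predecessors for $\to$), whereas the paper simply asserts an injection of $F_V(W)$ into $\prod_{\alpha\in\mathcal{A}_{in}}\bigoplus_{\alpha\to\gamma}S^\gamma(\alpha)$ without spelling this out.
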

\begin{proof}
By the maximality of the covering $U^{\mathcal{A}}$ of $X^{\mathcal{A}}$ by the $\{U^{\alpha}\}_{\alpha \in \mathcal{A}}$, it is sufficient to verify the axioms {\ref{condition:super-local}, \ref{condition:partition_identity} and \ref{condition:orthogonality} for this covering $U^{\mathcal{A}}$.} For \ref{condition:super-local}, if $U^{\alpha}\nsubseteq U^{\beta}$, then
$\alpha\nrightarrow\beta$, which in turn implies that $S_{\alpha\beta}=0$ so $e_{\beta|\alpha}=0$. {Remark: as every point outside a $U^{\alpha}$ belongs to an open set $U^{\alpha'}$ with $\alpha\neq\alpha'$, this implies the
condition \ref{condition:local}. } 

For \ref{condition:partition_identity}, let us consider $x\in V_\alpha$; to have
$e_{\gamma|\alpha}(x)\neq 0$, we must have $\gamma\rightarrow \alpha$, but the definition
of interaction decomposition tells that $x$ belongs to the direct sum of the spaces $S_{\alpha\beta}$, thus only a finite number of the
$e_{\gamma|\alpha}(x)$ are different from zero. {Now consider any open set $W$ in $X^{\mathcal{A}}$: it is an arbitrary union of open sets $U^{\omega}$,
for $\omega\in W$. Let us denote by $W^\uparrow$ the set of $\gamma$ such that $\omega \to \gamma$ for some $\omega\in W$. We claim that }
\begin{equation}
{F_V(W) \cong \bigoplus_{\gamma \in W^\uparrow} S_\gamma.}
\end{equation}
In fact, one has
\begin{align*}
F_V(W) &= \varprojlim_{\omega \in W} F_V(\omega) \\
& \cong \left\{(s_\omega) \in \prod_{\omega \in W} \bigoplus_{\omega \to \gamma} S_{\omega\gamma}\mid \text{for every arrow }\alpha\to \beta\text{, }j_{\alpha\beta}(s_\beta) = s_{\alpha}\right\}.
\end{align*}
{It is evident that any element of $\bigoplus_{\gamma \in W^\uparrow} S_\gamma$ determines an element of this limit. In the other direction, remark that
an element of $F_V(W)$ is uniquely determined by an element $(t_\gamma)_\gamma$ of $\prod_{\gamma \in W^\uparrow} S_\gamma$. But only a finite number of the $t_\gamma$ can be nonzero, since $F_V(W)$ also injects into $\prod_{\alpha\in \mathcal A_{in}} \bigoplus_{\alpha \to \gamma} S^\gamma(\alpha)$, where the product is finite.}

\indent For \ref{condition:orthogonality} consider $\beta\neq \beta'$ lower than $\alpha\in \mathcal{A}$, then by definition
of the projector $e_{\beta|\alpha}$ the space $S_{\alpha|\beta'}$ belongs to its kernel.
\end{proof}

{Together with the discussion before Proposition \ref{grouphzero}, the lemma  implies the following result.}

\begin{theorem}\label{thm:equivalenece_extrafine_and_decomposition} For any injective presheaf of vector spaces $V$,
\begin{enumerate}
\item if the associated sheaf on $X^{\mathcal{A}}$ is canonically extra-fine then $V$ admits an interaction decomposition, and \\
\item if $V$ has an interaction
decomposition, and $\mathcal{A}$ satisfies $(a)$, then the associated sheaf on $X^{\mathcal{A}}$ is canonically extra-fine. 
\end{enumerate}
\end{theorem}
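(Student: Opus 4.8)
The plan is to treat the two implications separately, since they have very different content. Implication (2) is essentially already available: under hypothesis $(a)$ it is exactly Lemma \ref{lem:ID_gives_extrafine}. So for (2) I would simply invoke that lemma: given an interaction decomposition $\{S_\gamma\}$ of $V$, the family of projectors $\{e_\beta\}_{\beta\in\mathcal A}$ onto $S_{\alpha\beta}$ parallel to $\bigoplus_{\beta'\neq\beta}S_{\alpha\beta'}$ is a super-local orthogonal partition of unity subordinate to the maximal covering $U^{\mathcal A}$, so the associated sheaf $F_V$ on $X^{\mathcal A}$ is canonically extra-fine. Condition $(a)$ is used there precisely to guarantee that, for each open $W$, the identification $F_V(W)\cong\bigoplus_{\gamma\in W^\uparrow}S_\gamma$ has only finitely many nonzero components, which is what \ref{condition:partition_identity} requires.

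For implication (1) I would start from the discussion preceding Proposition \ref{grouphzero}, dualized to the upper topology. Canonical extra-fineness provides a super-local orthogonal partition of unity $\{e_\gamma\}_{\gamma\in\mathcal A}$ subordinate to $U^{\mathcal A}$; by Lemma \ref{lem:orthogonality} each $e_\gamma$ is idempotent, the $e_\gamma$ are mutually orthogonal by \ref{condition:orthogonality}, and the subsheaves $S^\gamma:=\im e_\gamma$ satisfy $F_V=\bigoplus_\gamma S^\gamma$. Writing $e_{\gamma|\alpha}$ for the component of $e_\gamma$ at $U^\alpha$ (where $F_V(U^\alpha)=V_\alpha$), super-locality \ref{condition:super-local} gives $e_{\gamma|\alpha}=0$ unless $\alpha\to\gamma$, so evaluating the decomposition at $U^\alpha$ yields
\begin{equation*}
V_\alpha=\bigoplus_{\gamma:\,\alpha\to\gamma}\im e_{\gamma|\alpha}.
\end{equation*}
I would then set $S_\gamma:=\im e_{\gamma|\gamma}\subseteq V_\gamma$ and compare with Definition \ref{formulainteractiondecomposition}, so that the entire problem reduces to identifying the $\gamma$-summand as $\im e_{\gamma|\alpha}=j_{\alpha\gamma}S_\gamma$ for each arrow $\alpha\to\gamma$.

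This identification is the crux, and its two inclusions are of unequal difficulty. The inclusion $\supseteq$ is formal: naturality of $e_\gamma$ on $\alpha\to\gamma$ gives $e_{\gamma|\alpha}\circ j_{\alpha\gamma}=j_{\alpha\gamma}\circ e_{\gamma|\gamma}$, whence $j_{\alpha\gamma}S_\gamma=\im(e_{\gamma|\alpha}j_{\alpha\gamma})\subseteq\im e_{\gamma|\alpha}$; together with injectivity of $j_{\alpha\gamma}$ and orthogonality this already shows the sum $\sum_{\alpha\to\gamma}j_{\alpha\gamma}S_\gamma$ is direct inside $V_\alpha$. The reverse inclusion $\im e_{\gamma|\alpha}\subseteq j_{\alpha\gamma}S_\gamma$ — equivalently, using idempotency, $\im e_{\gamma|\alpha}\subseteq\im j_{\alpha\gamma}$ — is the main obstacle, and I expect it to be the genuinely delicate point. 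It asserts that each super-local summand $S^\gamma$ is \emph{generated from its top stratum} $S_\gamma=S^\gamma(U^\gamma)$ by the structure maps, and this does \emph{not} follow from naturality and super-locality alone: a poorly chosen (e.g.\ trivial) super-local orthogonal partition can fail it, so the correct argument must either select a partition adapted to the boundary filtration $V'_\gamma=\sum_{\gamma\xrightarrow{\neq}\delta}V_{\gamma\delta}$ or exploit the global coherence forced by \ref{condition:partition_identity} across \emph{all} of $U^{\mathcal A}$. In substance this reverse inclusion is the very sum–intersection phenomenon that condition \ref{condition_G} isolates later in the section, and I would try to obtain it by descending on the dimension of $\alpha$, using the partitions at the strictly larger opens $U^\delta\supsetneq U^\alpha$ (equivalently $\delta\xrightarrow{\neq}\alpha$... i.e.\ the non-degenerate chains above $\alpha$) to constrain $\im e_{\gamma|\alpha}$ into $\im j_{\alpha\gamma}$.
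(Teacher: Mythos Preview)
Your handling of (2) is exactly the paper's: it simply invokes Lemma~\ref{lem:ID_gives_extrafine}. For (1) the paper's entire argument is the reference to the discussion preceding Proposition~\ref{grouphzero}, namely the subsheaf decomposition $F_V=\bigoplus_\gamma S^\gamma$ with $S^\gamma=\im e_\gamma$, and the paper treats the passage from this to an interaction decomposition in the sense of Definition~\ref{formulainteractiondecomposition} as immediate, never addressing the reverse inclusion $\im e_{\gamma|\alpha}\subseteq j_{\alpha\gamma}S_\gamma$. So your isolation of that inclusion as the crux is more scrupulous than what the paper actually provides.

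Your proposed repair by dimensional descent cannot close the gap, and the obstruction is already contained in the ``trivial partition'' you mention. If $\mathcal A$ has a terminal object $\omega$, then $e_\omega=\id$, $e_\gamma=0$ for $\gamma\neq\omega$ is a super-local orthogonal partition of unity subordinate to $U^{\mathcal A}$ (since $U^\omega=X^{\mathcal A}$), so \emph{every} injective presheaf on such a poset is canonically extra-fine. Now take the diamond $\alpha\to\beta_1,\beta_2\to\omega$ with $V_\omega=0$, $V_{\beta_1}=V_{\beta_2}=V_\alpha=\mathbb K$ and $j_{\alpha\beta_i}=\id$. Any interaction decomposition would force $S_\omega=0$, hence $S_{\beta_i}=V_{\beta_i}=\mathbb K$, and then $j_{\alpha\beta_1}S_{\beta_1}=j_{\alpha\beta_2}S_{\beta_2}=V_\alpha$ cannot lie in direct sum inside $V_\alpha$. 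Thus this $V$ is canonically extra-fine yet admits no interaction decomposition whatsoever. Implication (1) therefore requires a further hypothesis tying the partition to the subspaces $V_{\alpha\gamma}$; neither the paper's brief reference nor a descent on the dimension of $\alpha$ can supply one.
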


We will see that an interaction decomposition implies the following statement, that we call \emph{condition G}:
\nopagebreak
\begin{equation}\tag{G}\label{condition_G}
\forall \alpha, \beta \in \mathcal A \text{ such that }\alpha \to \beta, \quad V_{\alpha\beta} \cap \left( \sum_{\substack{\gamma:  \alpha \xrightarrow{\neq} \gamma \\ \gamma  \not\to  \beta}} V_{\alpha\gamma} \right) \subset \sum_{\substack{\gamma:   \alpha \xrightarrow{\neq} \gamma \\  \beta  \xrightarrow{\neq}\gamma }} V_{\alpha\gamma},
\end{equation}
where $\beta  \xrightarrow{\neq}\gamma$ means that $\beta \to \gamma$ and $\beta \neq \gamma$.

{The condition \ref{condition_G} implies in turn the existence of an interaction decomposition, if one supposes that the poset $\mathcal A$ is of local finite (lower) dimension. As told in the preceding section, this means that for every object $\alpha $ of $\mathcal A$, the non-degenerate chains $\alpha \xrightarrow{\neq} \beta_1\xrightarrow{\neq} \beta_2 \xrightarrow{\neq} \cdots $  are all finite and their length is uniformly bounded; the least upper bound is called the \emph{dimension} of $\alpha$.
}

\begin{theorem}\label{thm2:extrafine_and_conditionG}
Let $V$ be a {linear} injective presheaf on a poset $\mathcal A$.
\begin{enumerate}
\item  If the the condition \ref{condition_G} is satisfied and $\mathcal{A}$ is of locally finite dimension (in the lower direction), {the
presheaf $V$ on $\mathcal{A}$ admits an interaction decomposition;}
\item {If the
presheaf $V$ on $\mathcal{A}$ admits an interaction decomposition}, the condition \ref{condition_G} is satisfied.
\end{enumerate}
\end{theorem}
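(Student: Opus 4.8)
The plan is to treat the two implications separately: the implication (2), interaction decomposition $\Rightarrow$ \ref{condition_G}, is a direct computation valid with no finiteness hypothesis, while (1), \ref{condition_G} together with locally finite dimension $\Rightarrow$ interaction decomposition, is the substantial direction. Throughout I write $S_{\alpha\delta}=j_{\alpha\delta}S_\delta$ and use the functoriality $j_{\alpha\gamma}j_{\gamma\delta}=j_{\alpha\delta}$ together with injectivity of the $j$'s.

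For (2), I would start from $V_\alpha=\bigoplus_{\alpha\to\delta}S_{\alpha\delta}$ and observe that for every arrow $\alpha\to\gamma$ one gets $V_{\alpha\gamma}=j_{\alpha\gamma}V_\gamma=\bigoplus_{\gamma\to\delta}S_{\alpha\delta}$. Thus, inside the fixed direct sum $V_\alpha=\bigoplus_{\alpha\to\delta}S_{\alpha\delta}$, each of the three spaces occurring in \ref{condition_G} is a coordinate subspace $\bigoplus_{\delta\in I}S_{\alpha\delta}$ for an explicit index set $I$. Using that, for coordinate subspaces, $\sum_k\bigoplus_{\delta\in I_k}S_{\alpha\delta}=\bigoplus_{\delta\in\bigcup_k I_k}S_{\alpha\delta}$ and $(\bigoplus_{\delta\in I}S_{\alpha\delta})\cap(\bigoplus_{\delta\in J}S_{\alpha\delta})=\bigoplus_{\delta\in I\cap J}S_{\alpha\delta}$, condition \ref{condition_G} reduces to a set-theoretic inclusion of index sets. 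The verification is the remark that if $\delta$ indexes a summand of the left-hand side, so that $\beta\to\delta$ and $\gamma\to\delta$ for some $\gamma$ with $\alpha\xrightarrow{\neq}\gamma$ and $\gamma\not\to\beta$, then $\delta\neq\beta$ (else $\gamma\to\delta=\beta$ contradicts $\gamma\not\to\beta$); hence $\beta\xrightarrow{\neq}\delta$, and choosing the witness $\gamma=\delta$ (which also satisfies $\alpha\xrightarrow{\neq}\delta$) shows $S_{\alpha\delta}\subseteq V_{\alpha\delta}$ sits in the right-hand side.

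For (1), for every $\alpha$ I would set the boundary $V'_\alpha=\sum_{\alpha\xrightarrow{\neq}\gamma}V_{\alpha\gamma}$ and choose (by Zorn's lemma in the infinite-dimensional case) a supplement $S_\alpha$ with $V_\alpha=V'_\alpha\oplus S_\alpha$, and then claim that $\{S_\gamma\}_{\gamma\in\mathcal A}$ is an interaction decomposition, i.e. $V_\alpha=\bigoplus_{\alpha\to\delta}S_{\alpha\delta}$ for all $\alpha$. This splits into directness and spanning. Directness follows from \ref{condition_G} alone, with no finiteness on $\mathcal A$: given a relation $\sum_{\alpha\to\delta}s_\delta=0$ with $s_\delta\in S_{\alpha\delta}$ and finite support $T$, suppose $T\neq\emptyset$ and pick $\beta\in T$ maximal for the order $\delta\leq\delta'\iff\delta'\to\delta$. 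If $\beta=\alpha$, then $s_\alpha=-\sum_{\alpha\xrightarrow{\neq}\delta}s_\delta\in S_\alpha\cap V'_\alpha=0$. Otherwise $\alpha\xrightarrow{\neq}\beta$, and maximality forces $\delta\not\to\beta$ for every other $\delta\in T$, so $s_\beta=-\sum_{\delta\in T\setminus\{\beta\}}s_\delta$ lies in $\sum_{\alpha\xrightarrow{\neq}\gamma,\,\gamma\not\to\beta}V_{\alpha\gamma}$; since also $s_\beta\in V_{\alpha\beta}$, condition \ref{condition_G} places $s_\beta$ in $\sum_{\beta\xrightarrow{\neq}\gamma}V_{\alpha\gamma}=j_{\alpha\beta}V'_\beta$, whence $s_\beta\in S_{\alpha\beta}\cap j_{\alpha\beta}V'_\beta=j_{\alpha\beta}(S_\beta\cap V'_\beta)=0$. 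Either way $s_\beta=0$ contradicts $\beta\in T$, so $T=\emptyset$.

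Spanning, $V_\alpha=\sum_{\alpha\to\delta}S_{\alpha\delta}$, is where locally finite dimension enters, through induction on $\dim\alpha$: when $\dim\alpha=0$ the element $\alpha$ is maximal, $V'_\alpha=0$ and $S_\alpha=V_\alpha$; for the step, every $\beta$ with $\alpha\xrightarrow{\neq}\beta$ has $\dim\beta<\dim\alpha$, so by induction $V_{\alpha\beta}=j_{\alpha\beta}V_\beta=\sum_{\beta\to\delta}S_{\alpha\delta}$, and summing over such $\beta$ gives $V'_\alpha\subseteq\sum_{\alpha\xrightarrow{\neq}\delta}S_{\alpha\delta}$, so that $V_\alpha=S_\alpha+V'_\alpha\subseteq\sum_{\alpha\to\delta}S_{\alpha\delta}$, the reverse inclusion being clear. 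I expect the directness step to be the conceptual heart, since it is exactly where \ref{condition_G} is indispensable; the subtlety it must overcome is that the components $s_\delta$ of an arbitrary linear relation are not a priori the components of a known direct-sum decomposition, which is precisely why extracting a maximal element of the finite support — rather than applying a coordinate projection — is the right device.
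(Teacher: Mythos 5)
Your proof is correct. For part (2), your argument is essentially the paper's in different notation: the paper applies the projector $e_{\beta|\alpha}$ attached to the decomposition $V_\alpha=\bigoplus_{\alpha\to\delta}S_{\alpha\delta}$, which is exactly your projection onto coordinate subspaces, and both reduce \ref{condition_G} to bookkeeping with index sets.

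For part (1), however, your organization genuinely differs from the paper's. The paper makes the same choice of supplements $S_\alpha$ of $V'_\alpha$, but then proves $V'_\alpha=\bigoplus_{\alpha\xrightarrow{\neq}\beta}S_{\alpha\beta}$ by a single recurrence on $\dim\alpha$: inside the inductive step it applies \ref{condition_G} to the maximal cells $\beta\in B$ strictly below $\alpha$, splits off the summands $S_{\alpha\beta}$ one at a time (``by (possibly transfinite) induction'' over $B$) to reach $V'_\alpha=\bigoplus_{\beta\in B}S_{\alpha\beta}\oplus\sum_{\beta\in B}V'_{\alpha\beta}$, and only then invokes the recurrence hypothesis for the $V'_\beta$ together with $j_{\alpha\gamma}=j_{\alpha\beta}\circ j_{\beta\gamma}$. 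You instead decouple the two halves of the direct-sum statement: independence is proven for all $\alpha$ at once, from \ref{condition_G} alone, by applying the condition to a maximal element of the finite support of a vanishing relation; spanning is a separate induction on dimension that never uses \ref{condition_G}. This buys two things the paper's write-up leaves implicit: it isolates what each hypothesis is for (\ref{condition_G} yields independence with no finiteness assumption on $\mathcal A$ whatsoever, while locally finite dimension is needed only for spanning), and it replaces the transfinite induction over the set of maximal cells by the choice of a maximal element of a finite set. The maximality devices are cousins --- the paper takes $\beta$ maximal among cells strictly below $\alpha$, you take $\beta$ maximal in the support $T$ --- and the extra checks your variant requires (the case split $\beta=\alpha$ versus $\beta\neq\alpha$, and the observation that $\alpha\notin T$ in the latter case, so that every other $\delta\in T$ satisfies $\alpha\xrightarrow{\neq}\delta$ and $\delta\not\to\beta$) are done correctly, as is the identification $\sum_{\beta\xrightarrow{\neq}\gamma}V_{\alpha\gamma}=j_{\alpha\beta}V'_\beta$ and the use of injectivity in $S_{\alpha\beta}\cap j_{\alpha\beta}V'_\beta=j_{\alpha\beta}(S_\beta\cap V'_\beta)=0$.
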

\begin{proof}

For each $\alpha \in \mathcal  A$, we define the \emph{boundary sum} $V'_\alpha=\sum_{\beta: \alpha \to \beta, \alpha \neq \beta} V_{\alpha\beta}$,
and we choose any supplementary space $S_\alpha$ of it.
Hence it remains to prove that $V'_\alpha$ is the direct sum  $\bigoplus_{\beta: \alpha \to \beta, \beta\neq \alpha} S_{\alpha\beta}$. We prove this by recurrence in the dimension of $\alpha$.

First, if $\alpha$ has dimension zero, then it is maximal, which means that $\alpha\to \beta$ implies $\alpha = \beta$. So $V_\alpha'=0$ and the claim is then trivially true.

Let us suppose now that the recurrence hypothesis hods true in dimension smaller or equal than $r-1$, for some $r\geq 1$, and consider $\alpha$ of dimension  $r$.

Let $B$ be the set of maximal cells $\beta$ such that $\alpha \to \beta$ and $\alpha \neq\beta$. And for $\beta\in B$, consider $x \in V_{\alpha\beta}$, and suppose it also belongs to the algebraic sum $\sum_{\gamma:\alpha\to\gamma,\beta\neq \gamma,\gamma \neq\alpha } V_{\alpha\gamma}$. As $\beta$ is maximal, we have
\begin{equation}
x \in \sum_{\substack{\gamma: \alpha \xrightarrow{\neq} \gamma \\ \gamma \not\to \beta }} V_{\alpha\gamma}.
\end{equation}
Then, applying the condition \ref{condition_G} to $x$, we deduce that $x$ belongs to the sum of $V_{\alpha\gamma}$ over the $\gamma\in \mathcal A$ such that $\alpha  \xrightarrow{\neq}\gamma$ and $  \beta  \xrightarrow{\neq}\gamma$, which coincides with $V'_{\alpha\beta}= j_{\alpha\beta} V_\beta'$, consequently
\begin{equation}
V'_\alpha=S_{\alpha\beta}\oplus (V'_{\alpha\beta}+\sum_{\beta'\neq\beta,\beta'\in B}V_{\alpha\beta'}).
\end{equation}
For $\beta'\in B$, $\beta'\neq \beta$, consider $x'\in V_{\alpha\beta'}$ and suppose it also belongs to the algebraic sum $\sum_{\beta"\in B, \beta"\neq \beta,\beta"\neq\beta' } V_{\alpha\gamma}$. As $\beta'$ is maximal, we have
\begin{equation}
x \in \sum_{\substack{\gamma: \alpha \xrightarrow{\neq} \gamma \\ \gamma \not\to \beta' }} V_{\alpha\gamma}
\end{equation}
Then, applying the condition \ref{condition_G} to $x'$, we deduce that $x'$ belongs to the sum of $V_{\alpha\gamma}$ over the $\gamma\in \mathcal A$ such that $\alpha  \xrightarrow{\neq}\gamma$ and $  \beta'  \xrightarrow{\neq}\gamma$, which is the space $V'_{\alpha\beta'}= j_{\alpha\beta'} V_\beta'$, consequently
\begin{equation}
V'_\alpha=S_{\alpha\beta}\oplus S_{\alpha\beta'}\oplus (V'_{\alpha\beta}+V'_{\alpha\beta'}+\sum_{\beta"\in B\setminus\{\beta,\beta'\}}V_{\alpha\beta"}).
\end{equation}
By (possibly transfinite) induction, we get
\begin{equation}
V'_\alpha=\bigoplus_{\beta\in B}S_{\alpha\beta}\oplus \sum_{\beta\in B}V'_{\alpha\beta}.
\end{equation}
Then we conclude by applying the recurrence hypothesis to the spaces $V'_{\beta}$, and the transitivity, $j_{\alpha\gamma}=j_{\alpha\beta}\circ j_{\beta\gamma}$.

We prove now the second claim. {Suppose that there exists an interaction decomposition of $V$, as in \eqref{formulainteractiondecomposition}.} Let us fix $\alpha\in \mathcal{A}$, look at $\alpha\to\beta$, and consider a vector $x$
in $S_{\alpha\beta}$. Suppose that this vector is equal to a finite sum $y_1+...+y_m$ of elements of $V_{\alpha\beta_1},...,V_{\alpha\beta_m}$ respectively, with
$\alpha \to \beta_i$ but $\beta \not\to \beta_i$. {Applying to each of these vectors $y_i$ the projector $e_{\beta|\alpha}$, we find zero}: however $e_{\beta|\alpha}(x)=x$ by definition of $S_{\alpha \beta}$, thus $x=0$. Now consider any vector $z$ in $V_{\alpha\beta}$; by interaction decomposition, $z$ is
(in a unique way) the sum of a vector $x\in S_{\alpha\beta}$, and a vector $y$ in the space $V'_{\alpha\beta}$, equal to the sum of the $S_{\alpha\gamma}$ for $\beta\xrightarrow{\neq}\gamma$,
which is included in the sum of all the $V_{\alpha\gamma}$ for $\beta\xrightarrow{\neq}\gamma$. Then the condition \ref{condition_G} is proved.
\end{proof}

In this generality, the theorem above appears for the first time in \cite{Sergeant2019intersection}, by G. Sergeant-Perthuis. It holds true if we replace the property of local finite dimension by the noetherian property of
well-foundedness.\\

We also refer to condition \ref{condition_G} as \emph{sum-intersection property}. Before the work of G. Sergeant-Perthuis, a particular case of this property appeared in
the book of Lauritzen (see Proposition B.5 in the Appendix B of \cite{Lauritzen}), as a corollary of the interaction decomposition. Lauritzen considers  a finite poset $\mathcal{A}$
and a presheaf of finite dimensional vector spaces $\{V_a\}_{a\in \mathcal{A}}$ that admits an interaction decomposition; the property is stated for
two open subsets $U,V$ of the lower space $X_\mathcal{A}$ (named generating classes, the topology was not mentioned) in the following form:
\begin{equation}
\sum_{c\in R(U\cap V)}V_{c}=\left(\sum_{a\in R(U)}V_a\right)\cap \left(\sum_{b\in R(V)}V_b\right),
\end{equation}
where $R(U)$ denotes the set of all elements $a$ of $\mathcal{A}$  such that $U_a$ is included in $U$.\\

{In \cite{Sergeant2019intersection}  it is assumed that all the $V(\alpha)$, $\alpha\in \mathcal{A}$, belong to a fixed vector space $W$, this is not a restriction, because it is always possible to inject all of them in the colimit of $V$ seen as a functor over $\mathcal{A}^{op}$.}

Let us recall that the connected composents of a poset $\mathcal A$ are the equivalent classes of the equivalence relation generated by the preorder.

\begin{theorem}\label{Gacyclic}  Let $V$ be an injective presheaf {with an interaction decomposition}, $F_V$ the sheaf on $X^{\mathcal A}$ associated to it, and $F_{S^\alpha}$ the sheaf on $X^{\mathcal A}$ induced by $S^\alpha$. 
\begin{enumerate}
\item  For $\gamma\in \mathcal{A}$, if $\gamma$ is final in its connected component, then $H^{0}(X^{\mathcal A},F_{S^{\gamma}})\cong S_\gamma$, where $S_\gamma$ denotes the stalk at  $\gamma$, and $H^{0}(X^{\mathcal A},F_{S^{\gamma}})=0$ otherwise.
\item { {When the finite products exist conditionally in $\mathcal{A}$} and $\mathcal A$ is lower finitely covered, the sheaf $F_V$ on $X^{\mathcal{A}}$ associated to $V$ is acyclic
and }
$$ {H^{0}(X^{\mathcal A},F_V)\cong \bigoplus_{\alpha\in \mathcal{A}}H^{0}(X^{\mathcal A},F_{S^\alpha})\cong \bigoplus_{\alpha\in \mathcal{A}^{f}}S_{\alpha},}$$
where $\mathcal A^f$ are the final elements of each connected component when they exist.
\end{enumerate}
\end{theorem}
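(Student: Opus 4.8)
I want to prove Theorem \ref{Gacyclic}, which computes the zeroth cohomology of the interaction sheaves $F_{S^\gamma}$ and establishes acyclicity of $F_V$. My strategy is to first nail down part (1) by a direct computation of global sections of each summand sheaf $F_{S^\gamma}$, then bootstrap part (2) from the acyclicity machinery of Theorem \ref{thm1} combined with the decomposition into summands from Lemma \ref{lem:ID_gives_extrafine}. The overall logical flow is: (a) show $F_V$ is hyper-extra-fine under the stated hypotheses, hence acyclic; (b) use the direct-sum decomposition $F_V \cong \bigoplus_\alpha F_{S^\alpha}$ to split $H^0$; (c) identify each $H^0(X^{\mathcal A}, F_{S^\gamma})$ explicitly.

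\textbf{Part (1).} First I would compute global sections of a single interaction sheaf. By Proposition \ref{prop:functors_are_sheaves} applied to the upper topology, $H^0(X^{\mathcal A}, F_{S^\gamma}) = \varprojlim_{\alpha \in \mathcal A} S^\gamma(\alpha)$, where $S^\gamma(\alpha) = j_{\alpha\gamma}S_\gamma$ if $\alpha \to \gamma$ and $0$ otherwise. A global section is thus a coherent family $(s_\alpha)$ with $s_\alpha \in S^\gamma(\alpha)$, compatible under the injective maps $j_{\alpha\beta}$. Since all maps are injective and $S^\gamma$ is supported on the down-set $\{\alpha : \alpha \to \gamma\}$, such a family is determined by its value at $\gamma$ itself, namely an element of $S_\gamma$ (here I use $\alpha \to \gamma$ means $\gamma \le \alpha$, so $\gamma$ is the minimal element of its own support). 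The point where I must be careful is the coherence across \emph{different} elements of the connected component: if $\gamma$ is final in its connected component (i.e.\ there is no $\delta \neq \gamma$ with $\gamma \to \delta$ reachable), then every coherent family extends consistently and is pinned down by $s_\gamma \in S_\gamma$, giving $H^0 \cong S_\gamma$. If $\gamma$ is \emph{not} final, there exists $\delta$ with $\gamma \xrightarrow{\neq} \delta$ in the component; coherence forces $s_\gamma = j_{\gamma\delta}(s_\delta)$ with $s_\delta \in S^\gamma(\delta) = 0$ (as $\delta \not\to \gamma$ when $\delta\neq\gamma$), and injectivity then forces $s_\gamma = 0$, so the whole section vanishes and $H^0 = 0$. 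I would spell out this coherence argument carefully, as it is the substantive content of part (1).

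\textbf{Part (2).} Under the hypotheses that finite products exist conditionally and $\mathcal A$ is lower finitely covered (condition $(a)$), Lemma \ref{lem:ID_gives_extrafine} shows $F_V$ is canonically extra-fine and $F_V \cong \bigoplus_{\alpha \in \mathcal A} F_{S^\alpha}$. By Remark \ref{coproduct_equiv-hyper-extra}, conditional stability under finite coproducts upgrades canonical extra-fineness to hyper-extra-fineness; here I must translate ``finite products exist conditionally'' in $\mathcal A$ into the closure-under-nonempty-finite-intersection property of the covering $\mathcal U^{\mathcal A}$ on $X^{\mathcal A}$ (the upper topology exchanges the roles of products and coproducts, which is why products, not coproducts, appear). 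Then Theorem \ref{thm1} gives acyclicity: $H^n(X^{\mathcal A}, F_V) = 0$ for $n \ge 1$. For the $H^0$ computation, the direct-sum decomposition and the fact that $H^0$ commutes with direct sums of sheaves (a coherent section of a direct sum is componentwise coherent, which holds here since the projections $e_\beta$ are natural) yield $H^0(X^{\mathcal A}, F_V) \cong \bigoplus_\alpha H^0(X^{\mathcal A}, F_{S^\alpha})$; applying part (1) collapses this to $\bigoplus_{\alpha \in \mathcal A^f} S_\alpha$.

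\textbf{The main obstacle.} I expect the delicate step to be the finiteness bookkeeping in the $H^0$ decomposition: commuting $H^0$ past the (possibly infinite) direct sum requires knowing that a global section of $F_V$ has only finitely many nonzero components, which is exactly what the proof of Lemma \ref{lem:ID_gives_extrafine} secures via the injection $F_V(W) \hookrightarrow \prod_{\alpha \in \mathcal A_{in}} \bigoplus_{\alpha \to \gamma} S^\gamma(\alpha)$ with a finite product, using condition $(a)$. I would invoke that bound explicitly rather than re-deriving it. The translation between the product condition on $\mathcal A$ and the intersection-closure of $\mathcal U^{\mathcal A}$ needed for hyper-extra-fineness is the other place demanding a short but careful argument, since the direction of arrows and the passage to the upper topology must be tracked consistently.
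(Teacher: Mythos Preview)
Your overall strategy matches the paper's exactly: part (2) is handled by Lemma~\ref{lem:ID_gives_extrafine} (canonical extra-fineness and the direct-sum splitting), the upgrade to hyper-extra-fine via the conditional-product hypothesis as in Remark~\ref{coproduct_equiv-hyper-extra}, and then Theorem~\ref{thm1} for acyclicity. The $H^0$ splitting and the finiteness bookkeeping you flag are precisely what the paper invokes.

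There is, however, a gap in your treatment of part~(1). You argue that if $\gamma$ is not final in its connected component then there exists $\delta$ with $\gamma\xrightarrow{\neq}\delta$, and you use this to force $s_\gamma=0$. But ``not final in its component'' does \emph{not} mean ``$\gamma$ has a strict successor'': it means there is some object in the component that does not map to $\gamma$. It can happen that $\gamma$ is $\to$-maximal (no strict successor) yet the component contains another $\to$-maximal element $\gamma'$; then $\gamma$ is not final, but your $\delta$ does not exist. The correct argument is the one the paper sketches: for the presheaf $S^\gamma$ every arrow is either an isomorphism (source and target both in $U^\gamma$) or the zero map (target outside $U^\gamma$). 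If $\gamma$ is not final in its component, connectedness produces some arrow $\alpha\to\beta$ with $\alpha\in U^\gamma$ and $\beta\notin U^\gamma$; coherence then gives $s_\alpha=j_{\alpha\beta}(s_\beta)=0$, and since $s_\alpha=j_{\alpha\gamma}(s_\gamma)$ with $j_{\alpha\gamma}$ injective, $s_\gamma=0$ and the whole section vanishes. Your mechanism (a zero forces the section to die via injectivity) is right; you just located the zero in the wrong place.
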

\begin{proof}
{To compute $H^{0}(X^{\mathcal A},F_{S^\gamma})$ it is sufficient to consider the canonical covering $\mathcal{U}^{\mathcal{A}}$. The group $H^{0}(\mathcal{U}^{\mathcal A},F_{S^\gamma})$ is the set of global sections of $S^{\gamma}$. The presheaf $S^{\gamma}$ is zero outside $U^{\gamma}$ and is $(j_{\alpha\gamma}S_{\gamma})$ for $\alpha\in U^{\gamma}$. For this sheaf any arrow is zero or an isomorphism. A section is given by a collection $s_\alpha\in j_{\alpha\gamma}S^{\gamma}$, for $\alpha\in \mathcal{A}$, such that for any arrow $\alpha\rightarrow\beta$, we have $j_{\alpha\beta}(s_{\beta})=s_\alpha$.
Therefore, if a zero appears, the global section is zero; the only case where no zero map appears is for $\gamma$ in $\mathcal{A}^{out}$.}

{Since $V$ admits an interaction decomposition and $(a)$ is verified, the sheaf $F_V$ is canonically extra-fine in virtue of Theorem \ref{lem:ID_gives_extrafine}: $F_V$ is decomposed in a direct sum of the $F_{S^{\alpha}}$. The \v{C}ech cohomology of a direct sum of sheaves is the direct sum of their cohomology; this follows by projection of the cochain
and naturality of the coboundary operator, hence  $H^{0}(X^{\mathcal A},F_V)\cong \bigoplus_{\alpha\in \mathcal{A}}H^{0}(X^{\mathcal A},F_{S^\alpha})$. Cf. Proposition \ref{grouphzero}.}

{Finally, given the conditional existence
of products, canonically extra-fine implies hyper-extra-fine, so the acyclicity of $F_V$ results from the Theorem \ref{thm1}. }
\end{proof}

\noindent \begin{remark}\label{rmk:on_vanishing_deg_2} {As we will see in Section \ref{sec:nerves-comparison}, the vanishing of $H^1(X^{\mathcal A}, F_{S^\alpha})$ and $H^2(X^{\mathcal A}, F_{S^\alpha})$ follows from the injectivity in the comparison between \v{C}ech and topos cohomology, proved by Grothendieck in Tohoku. This does not require the hypothesis on finite coproducts, but an hypothesis is needed for the argument of direct sum.}
\end{remark}

\subsection{Duality}

We suppose that the presheaf  $V$ on $\mathcal{A}$ {admits an interaction decomposition.} For each $\alpha\in \mathcal{A}$, let $V_\alpha^{*}$ be the (algebraic) dual vector space of the vector space $V_\alpha=V(\alpha)$; the transpose maps $^{t}j_{\alpha\beta};\alpha\rightarrow \beta$ define a covariant functor on $\mathcal{A}$,
then a presheaf for the lower topology on $\mathcal{A}$. But the transposed maps $^{t}e_\alpha;\alpha\in \mathcal{A}$ give a decomposition of this presheaf into the product of the presheaves $S_\alpha^{*};\alpha\in \mathcal{A}$, not into a direct sum.  Therefore we need to follow another way to dualize {the} Theorem \ref{Gacyclic}.

This can be done adding a further hypothesis. Let us suppose that there exists a covariant functor  $F$ on $\mathcal A$ (equivalently, a topological sheaf on $X_{\mathcal A}$), with surjective arrows $\pi^{\beta\alpha}$
for  $\alpha\to \beta$, such that $V_\alpha=F_\alpha^{*}$ and $j_{\alpha\beta}= {}^{t}\pi^{\beta\alpha}$ for all pairs $\alpha,\beta$ with $\alpha \to \beta$. Then for every $\alpha\in \mathcal{A}$, the space $F_\alpha$ embeds naturally in $V_\alpha^{*}$, in such a manner that, for every pair $\alpha,\beta$
with $\alpha\to \beta$, $j_{\alpha\beta}$ induces the map $\pi^{\beta\alpha}$.  Let us denote by $e_\alpha^{*}$ the restriction of $^{t}e_\alpha$ to $F$. Given the following lemma, this gives a family of orthogonal projectors from $F$ to the dual copresheaf $V^{*}$ (by the same argument given in the proof of Lemma \ref{lem:orthogonality}). We do not ask that $^{t}e_\alpha$ preserves $F$.

\begin{lemma}
 $\id_F=\sum_{\alpha\in \mathcal{A}}e^{*}_\alpha$,
in the sense of finite sum when applied to a given vector $g\in F_\beta$ for any $\beta\in \mathcal{A}$.
\end{lemma}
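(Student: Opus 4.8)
The plan is to dualize the partition of unity relation $\id_V = \sum_{\alpha} e_\alpha$ and show that the transposed relation, restricted to the subfunctor $F \subseteq V^*$, remains valid as a finite sum on each stalk $F_\beta$. First I would fix an arbitrary $\beta \in \mathcal{A}$ and a vector $g \in F_\beta$. Since $F_\beta$ embeds in $V_\beta^* = (V_\alpha)^*$ evaluated at $\alpha = \beta$, I interpret $g$ as a linear functional on $V_\beta$, and $e_\alpha^* g$ is by definition the functional $v \mapsto g(e_{\alpha|\beta}(v))$ for $v \in V_\beta$. The goal is to establish that $g = \sum_{\alpha} e_\alpha^* g$, with only finitely many nonzero terms.

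The key step is to recognize that the interaction decomposition gives, on each stalk, a genuine \emph{finite} direct sum decomposition: by Definition \ref{formulainteractiondecomposition} we have $V_\beta = \bigoplus_{\beta \to \gamma} j_{\beta\gamma} S_\gamma$, and the projectors $e_{\gamma|\beta}$ are exactly the projections onto the summands $S_{\beta\gamma} = j_{\beta\gamma}S_\gamma$. For any \emph{fixed} $v \in V_\beta$, only finitely many $e_{\gamma|\beta}(v)$ are nonzero (this is condition \ref{condition:partition_identity}, established for injective presheaves with an interaction decomposition), and $v = \sum_\gamma e_{\gamma|\beta}(v)$. Applying the functional $g$ to this finite decomposition and using linearity, I get
\begin{equation}
g(v) = \sum_{\gamma} g(e_{\gamma|\beta}(v)) = \sum_{\gamma} (e_\gamma^* g)(v).
\end{equation}
The subtlety I must address is that the index set of nonzero terms a priori depends on $v$, whereas the claimed identity $g = \sum_\alpha e_\alpha^* g$ asserts a single finite sum of functionals independent of $v$. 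To fix this, I would argue that $e_\gamma^* g$ is nonzero only when $g$ does not annihilate $S_{\beta\gamma}$; since $g$ is a fixed functional on the finite direct sum whose nonzero summands relevant to $g$ are controlled — using that $F_\beta$ injects (via $(a)$ and the argument in Lemma \ref{lem:ID_gives_extrafine}) into a finite product indexed by $\mathcal{A}_{in}$ — only finitely many $e_\gamma^* g$ can be nonzero functionals, and the displayed equality of functionals follows by evaluating on each summand $S_{\beta\gamma}$ separately.

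The main obstacle I anticipate is precisely the passage from ``pointwise finite'' (for each $v$, the sum is finite) to ``uniformly finite'' (the set of $\gamma$ with $e_\gamma^* g \neq 0$ is finite, independent of $v$). This is where the finiteness hypothesis $(a)$ on $\mathcal{A}$, or the structure of the interaction decomposition restricted to a single stalk, must be invoked carefully; without it, $g$ could in principle have infinite ``support'' across the direct summands. I would resolve this by noting that $g \in F_\beta$ corresponds under the embedding to a functional whose nonzero components lie in the finitely many summands dictated by the coherent-section condition, so the sum $\sum_\alpha e_\alpha^* g$ genuinely terminates. Once this uniform finiteness is secured, the equality $\id_F = \sum_\alpha e_\alpha^*$ on $F_\beta$ is immediate from the evaluation computation above, and naturality in $\beta$ follows from the naturality of the $e_\alpha$ and the functoriality of transposition.
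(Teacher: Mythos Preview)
Your direct argument in the second paragraph is correct and is actually simpler than the paper's route: from the partition of unity $v=\sum_\gamma e_{\gamma|\beta}(v)$ (finite for each $v$) one gets immediately $g(v)=\sum_\gamma (e^*_\gamma g)(v)$, which is the content of the lemma. The paper instead fixes a basis $\{f_j\}_{j\in J}$ of $F_\gamma$, introduces the dual elements $x_j=f_j^{*}\in V_\gamma$, and combines two finiteness facts: each $x_j$ has a finite interaction decomposition (the set $A_j$ of $\beta$ with $e_\beta(x_j)\neq 0$ is finite), and each $g\in F_\gamma$ has finite support $B_g$ in $\{f_j\}$. The paper then checks the identity first on the $x_j$ and finally on an arbitrary $x\in V_\gamma$. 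This basis detour makes the role of the predual structure explicit but is not strictly needed; your transpose-of-the-partition argument reaches the same conclusion in one line.

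Your third and fourth paragraphs, however, chase a stronger statement than the one being proved. The ``uniform finiteness'' you worry about --- that $\{\alpha:e^*_\alpha g\neq 0\}$ be finite independently of $v$ --- is not what the paper establishes here, and condition~$(a)$ is \emph{not} among the hypotheses of this lemma (only the existence of an interaction decomposition and of the predual $F$ are assumed). Invoking $(a)$ and Lemma~\ref{lem:ID_gives_extrafine} is therefore not available at this point, and is also unnecessary: the lemma is a pointwise identity (for each test vector $v$ the sum is finite), which your second paragraph already proves. The passage to a genuine direct-sum decomposition of the sheaf is handled later, in Corollary~\ref{cor1:cohomology_predual}, where the dual finiteness condition~$(a^{*})$ is explicitly imposed for that purpose.
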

\begin{proof}
Consider $\gamma\in \mathcal{A}$ and a basis $\set{f_j}{j\in J}$ of $F_\gamma$ as a vector space over $\mathbb{K}$, the space
$V_\gamma=F_\gamma^{*}$ is isomorphic to the product $\mathbb{K}^{J}$, in such a manner that the duality is given by the natural evaluation.
The space $F_\gamma$ itself is isomorphic to the space of scalar functions on $J$ which are zero outside a finite subset.

 For $j\in J$, note $x_j=f_j^{*}$ the element of $V_\gamma$ corresponding to $f_j$ (in the dual basis). The set $A_j$ of elements $\beta\in \mathcal{A}$ such that
$e_\beta(x_j)\neq 0$ is finite, and we have
\begin{equation}
x_j=\sum_{\beta\in A_j}e_{\beta|\gamma}(x_j).
\end{equation}

Choose $g\in F_\gamma$. For any $\alpha \in \mathcal{A}$, we have
\begin{equation}
\langle x_j, e^{*}_{\alpha|\gamma}(g)\rangle=\langle e_{\alpha|\gamma}(x_j),g\rangle,
\end{equation}
where the bracket denotes the form of incidence from $V_\gamma^{*}\times V_\gamma$ to $\mathbb{K}$.
Then
\begin{equation}
\langle x_j, g\rangle=\sum_{\beta\in A_j}\langle e_{\beta|\gamma}(x_j), g\rangle=\langle x_j, \sum_{\beta\in A_j} e^{*}_{\beta|\gamma}(g)\rangle.
\end{equation}
If $\alpha$ does not belong to $A_j$, we have
\begin{equation}
0=\langle e_{\alpha|\gamma}(x_j), g\rangle=\langle x_j,e^{*}_{\alpha|\gamma}(g)\rangle,
\end{equation}
i.e. $x_j$ vanishes at $e^{*}_{\alpha|\gamma}(g)$. Therefore, for every $j\in J$ and $g\in F$,
\begin{equation}
\langle x_j, g\rangle=\langle x_j, \sum_{\beta\in \mathcal{A}} e^{*}_{\beta|\gamma}(g)\rangle.
\end{equation}
Let us denote by $B_g$ a finite set of indexes  $k\in J$ such that
\begin{equation}
g=\sum_{k\in B_g}g^{k}f_k.
\end{equation}
Equivalently, if $j$ does not belong to $B_g$, we have
\begin{equation}\label{eqzero}
0=\langle x_j, g\rangle=\langle x_j, \sum_{\beta\in \mathcal{A}} e^{*}_{\beta|\gamma}(g)\rangle,
\end{equation}
and if $j\in B_g$,
\begin{equation}\label{eqone}
g^{j}=\langle x_j,g \rangle=\langle x_j, \sum_{\beta\in \mathcal{A}} e^{*}_{\beta|\gamma}(g)\rangle.
\end{equation}
Now, consider any element $x\in V_\gamma=F_\gamma^{*}$, it is identified with the numerical function
that assigns $x(j)\in \mathbb{K}$ to $j\in J$. Then, using the above equations \eqref{eqzero} and \eqref{eqone}, we get
\begin{align*}
\langle x, g\rangle&=\langle x, \sum_{k\in B_g}g^{k}f_k \rangle\\
&=\sum_{k\in B_g}g^{k}x(k)=\sum_{k\in B_g}\langle x(k)x_k, \sum_{\beta\in \mathcal{A}} e^{*}_{\beta|\gamma}(g)\rangle\\
&=\langle x, \sum_{\beta\in \mathcal{A}} e^{*}_{\beta|\gamma}(g)\rangle,
\end{align*}
which implies the desired result.
\end{proof}

\noindent  {We noted that for $F$, in general the interaction decomposition does not hold}, but something else holds true, which is sufficient in many applications.

 The images of $e^{*}_{\alpha|\beta}$ for $\beta$ describing $\mathcal{A}$ define a sub-sheaf of $V^{*}$, that we denote $T^{\alpha}$. And we denote by $T_\alpha$ its stalk at $\alpha\in \mathcal{A}$. The above lemma tells that over $\mathcal{A}$, the cosheaf $F$ is isomorphic to the sum of the cosheaves {$T_\alpha$.}\\

{In what follos we denote by $\widetilde{F}$, or simply $F$ when there is no risk of confusion, the associated sheaf over $X_\mathcal{A}$.
}

To obtain a decomposition of the
associated sheaf $\widetilde{F}$ over $X_\mathcal{A}$, we need an hypothesis, dual of the condition $(a)$ { in Definition \ref{conditiona}}, i.e. we assume that $A^{op}$ is lower finitely covered, and we say that $\mathcal{A}$ is \emph{upper finitely covered}, or satisfied the condition $(a^{*})$.

\begin{corollary}\label{cor1:cohomology_predual} When the finite coproducts exist {conditionally} in $\mathcal{A}$, and when $\mathcal A$ satisfies the condition $(a^{*})$,
the sheaf {induced by $F$ on $X_{\mathcal A}$ } is acyclic
and $H^{0}(X_{\mathcal A}, F)\cong \bigoplus_{\alpha\in \mathcal{A}}H^{0}(X_{\mathcal A},T^{\alpha})= \bigoplus_{\alpha\in \mathcal{A}}T_\alpha$.
\end{corollary}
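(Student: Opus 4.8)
The plan is to treat this statement as the predual counterpart of Theorem \ref{Gacyclic}, exploiting the decomposition $F \cong \bigoplus_{\alpha} T^\alpha$ furnished by the preceding lemma (the $e^*_\alpha$ being orthogonal projectors with $\id_F = \sum_\alpha e^*_\alpha$, a finite sum on each stalk). First I would pass to the associated sheaves on $X_{\mathcal A}$ and reduce everything to the canonical covering $\mathcal{U}_{\mathcal A}$, which is cofinal, so that $H^n(X_{\mathcal A}, F) = H^n(\mathcal{U}_{\mathcal A}; F)$, and likewise for each $\widetilde{T^\alpha}$. The aim is to show that the \v{C}ech complex of $F$ splits as a product of the complexes of the $\widetilde{T^\alpha}$, that each factor is acyclic with $H^0 = T_\alpha$, and finally that the finiteness condition $(a^{*})$ forces this product to collapse to the claimed direct sum in degree zero.

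For the splitting I would use the conditional existence of coproducts. It guarantees that $\mathcal{U}_{\mathcal A}$ is closed under finite non-empty intersections: whenever $U_{\gamma_0} \cap \cdots \cap U_{\gamma_n} \neq \emptyset$ one has $U_{\gamma_0} \cap \cdots \cap U_{\gamma_n} = U_{\gamma_0 \vee \cdots \vee \gamma_n}$, so the value of the sheaf there is the single stalk $F_{\gamma_0 \vee \cdots \vee \gamma_n}$. By the lemma this stalk is the \emph{finite} direct sum $\bigoplus_\alpha (T^\alpha)_{\gamma_0 \vee \cdots \vee \gamma_n}$, and by naturality of the $e^*_\alpha$ the decomposition commutes with the \v{C}ech coboundary. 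Since a cochain is an unrestricted function on the tuples (different tuples may involve different blocks), one obtains a \emph{product}
\[
H^n(X_{\mathcal A}, F) \cong \prod_{\alpha\in\mathcal A} H^n(X_{\mathcal A}, \widetilde{T^\alpha}),
\]
using that cohomology commutes with products.

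Next I would compute each block. The sub-sheaf $\widetilde{T^\alpha}$ is supported on the up-set $U^{\alpha} = \{\gamma : \gamma \to \alpha\}$ (closed in $X_{\mathcal A}$), and on this support all its transition maps are isomorphisms, dual to the fact that the $j_{\gamma\gamma'}$ restrict to isomorphisms between the stalks of $S^\alpha$. Thus $\widetilde{T^\alpha}$ is the constant sheaf with value $T_\alpha$ on a poset whose least element is $\alpha$. Evaluating on $\mathcal{U}_{\mathcal A}$, the value $\widetilde{T^\alpha}(U_{\gamma_0}\cap\cdots\cap U_{\gamma_n})$ is $T_\alpha$ when every $\gamma_i \to \alpha$ and is $0$ otherwise; since every such intersection then contains $\alpha$, the complex is exactly the simplicial cochain complex, with constant coefficients $T_\alpha$, of the full simplex on the vertex set $U^{\alpha}$. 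The cone contraction based at the apex $\alpha$, namely $(h\psi)(\gamma_0,\dots,\gamma_{n-1}) = \psi(\alpha,\gamma_0,\dots,\gamma_{n-1})$, is a contracting homotopy, whence $H^0(X_{\mathcal A}, \widetilde{T^\alpha}) = T_\alpha$ and $H^n(X_{\mathcal A}, \widetilde{T^\alpha}) = 0$ for $n \geq 1$.

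Combining these gives acyclicity of $F$ at once, since $H^n = \prod_\alpha 0 = 0$ for $n \geq 1$, while $H^0(X_{\mathcal A}, F) = \prod_\alpha T_\alpha$. It then remains to collapse this product to $\bigoplus_\alpha T_\alpha$, and this is where the finiteness condition enters, dually to the way $(a)$ is used in Lemma \ref{lem:ID_gives_extrafine}: a global section is pinned down by its restriction to a finite dominating family of objects, and each such restriction is a finite sum of block-components, so only finitely many $T^\alpha$-components of any given section are non-zero. I expect this last finiteness step to be the main obstacle: for an infinite poset the inverse limit defining the global sections genuinely produces a product, and one must argue carefully — using the surjectivity of the transition maps $\pi^{\beta\alpha}$ of the copresheaf $F$ together with the dominating family — that the product does reduce to the direct sum. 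The cleanest formulation of this argument mirrors the injection $F_V(W) \hookrightarrow \prod_{\alpha\in\mathcal A_{in}} V_\alpha$ from the proof of Lemma \ref{lem:ID_gives_extrafine}, transported through the duality $V_\alpha = F_\alpha^{*}$; checking that the finiteness hypothesis as stated is exactly what makes this transport valid is the delicate point to be verified.
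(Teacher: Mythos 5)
Your per-block analysis is exactly the paper's Lemma \ref{lem5} (each $\widetilde{T^\alpha}$ is supported on the up-set $U^\alpha$, its transition maps there are isomorphisms, and the cone contraction at the apex $\alpha$ kills positive-degree cohomology and identifies $H^0$ with the stalk $T_\alpha$), and the reduction to the canonical covering $\mathcal{U}_{\mathcal{A}}$ is also how the paper proceeds. The proposal breaks at the assembly stage. The identification $C^n(\mathcal{U}_{\mathcal{A}};F)\cong\prod_\alpha C^n(\mathcal{U}_{\mathcal{A}};\widetilde{T^\alpha})$ is not correct in general: a cochain of $F$ must take, at each tuple $u$, a value in $F(U_u)=\bigoplus_\alpha T^\alpha(U_u)$, i.e.\ with only \emph{finitely many} nonzero block components at that tuple, whereas an element of the product of the block complexes may have infinitely many nonzero components at a single tuple. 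Thus $C^\bullet(\mathcal{U}_{\mathcal{A}};F)$ is in general a proper subcomplex of $\prod_\alpha C^\bullet(\mathcal{U}_{\mathcal{A}};\widetilde{T^\alpha})$; the two coincide exactly when every $U_u$ carries only finitely many nonzero blocks (true for hypergraphs, false for general posets satisfying the stated hypotheses). Being ``unrestricted as a function of tuples'' does not remove the direct-sum constraint at each fixed tuple, so neither your vanishing in positive degrees nor your formula $H^0=\prod_\alpha T_\alpha$ follows from the argument as written. The paper never forms this product: it uses the finiteness hypothesis to split the sheaf $\widetilde F$ itself into the direct sum of the $\widetilde{T^\alpha}$ (the predual analogue of Lemma \ref{lem:ID_gives_extrafine}), asserts that \v{C}ech cohomology then splits accordingly (projection of cochains and naturality of $\delta$), and feeds each summand to Lemma \ref{lem5}.

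The second gap is the final ``collapse'' of $\prod_\alpha T_\alpha$ to $\bigoplus_\alpha T_\alpha$, and here the step does not merely need care: it fails. Condition $(a^{*})$ provides finitely many elements lying \emph{below} everything, and the restriction maps of $\widetilde F$ toward them are the surjections $\pi^{\tau\omega}$, so a global section is \emph{not} pinned down by its restrictions to this family; the injection $F_V(W)\hookrightarrow\prod_{\alpha\in\mathcal A_{in}}V_\alpha$ of Lemma \ref{lem:ID_gives_extrafine} does not transport through the duality, because that argument runs on the injectivity of the $j_{\alpha\beta}$, which is precisely what is lost in passing to the predual. Concretely, take $\mathcal{A}=\{\emptyset\}\cup\{\{i\}:i\in\mathbb{N}\}$ with all $|E_i|=2$: conditional coproducts and $(a^{*})$ hold (with $\mathcal{A}_{out}=\{\emptyset\}$), each stalk carries at most two blocks so your product formula is valid here, and it gives $H^0(X_{\mathcal{A}},F)\cong\mathbb{K}\times\prod_i T_{\{i\}}$ --- the families of measures on the $E_i$ with a common total mass --- which strictly contains $\bigoplus_\alpha T_\alpha$ (uncountable versus countable dimension). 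So no collapse argument can be extracted from these hypotheses alone; what the direct-sum conclusion actually requires is domination from \emph{above} by finitely many elements, i.e.\ condition $(a)$ of Definition \ref{conditiona}, since then coherence determines every value of a section as a pushforward of one of finitely many values, each a finite sum of blocks. In short, the two points you yourself flagged as ``delicate'' are not verifications left to the reader but the places where the proposed route genuinely breaks.
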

\begin{proof}
The \v{C}ech cohomology of a direct sum of sheaves is the direct sum of their cohomology; this follows by projection of the cochain
and naturality of the coboundary operator.

Moreover, for any $\gamma\in \mathcal{A}$, the presheaf $T^{\gamma}$ is zero in the complementary set of $U^{\gamma}$ and is $(j_{\alpha\gamma}S_{\gamma})^*$ for $\alpha\in U^{\gamma}$.
Therefore its space of global section can be identified with the stalk at $\gamma$.

Therefore the corollary results from the following lemma.

\end{proof}

\begin{lemma}\label{lem5}
 Let $T$ be a presheaf on $\mathcal{A}$, equipped with the \emph{lower} A-topology, that is supported on a
set $U^{\gamma}$ for $\gamma\in \mathcal{A}$. If for every $\alpha,\beta \in U^{\gamma}$ such that $\alpha \to \beta$ the morphism $\pi^{\beta\alpha}$
is an isomorphism, then $T$ is acyclic and $H^{0}(X_{\mathcal A},T)=T_\gamma$.
\end{lemma}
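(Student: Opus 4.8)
The plan is to compute the Čech cohomology directly on the canonical covering $\mathcal{U}_{\mathcal{A}} = \{U_\alpha\}_{\alpha \in \mathcal{A}}$, which refines every open covering of $X_{\mathcal{A}}$ and hence realizes the colimit defining $H^n(X_{\mathcal{A}};T)$, so that $H^n(X_{\mathcal{A}};T) = H^n(\mathcal{U}_{\mathcal{A}};T)$ for every $n$. First I would record the geometry of the support: since the complement of $U^\gamma = \{\alpha \mid \alpha \to \gamma\}$ is open for the lower A-topology (by transitivity, if $\alpha \not\to \gamma$ and $\alpha \to \beta$ then $\beta \not\to \gamma$), the set $U^\gamma$ is closed, and in fact $U^\gamma = \overline{\{\gamma\}}$. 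The heart of the argument is then to compute the values of $T$ on the finite intersections $W := U_{\alpha_0} \cap \cdots \cap U_{\alpha_n}$ indexing the Čech complex.

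I claim that $T(W) \cong T_\gamma$ when all $\alpha_i \to \gamma$ (equivalently $\gamma \in W$) and $T(W) = 0$ otherwise. The crucial observation is that $W \cap U^\gamma \neq \emptyset$ if and only if $\gamma \in W$: if $\beta \in W$ satisfies $\beta \to \gamma$, then transitivity gives $\alpha_i \to \gamma$ for all $i$, whence $\gamma \in W$. Using $T(W) = \varprojlim_{\beta \in W} T_\beta$ (Proposition \ref{prop:functors_are_sheaves}) together with $T_\beta = 0$ off $U^\gamma$, only the indices $\beta \in W \cap U^\gamma$ contribute; when $\gamma \in W$, the coherence condition evaluated at the common successor $\gamma$ forces $\pi^{\gamma\beta}(s_\beta) = s_\gamma$, and since each $\pi^{\gamma\beta}$ is an isomorphism on $U^\gamma$, the coherent family is determined by its $\gamma$-component. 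This gives $T(W) \cong T_\gamma$ with restriction maps equal to the identity.

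With these values in hand, the Čech cochain complex $C^\bullet(\mathcal{U}_{\mathcal{A}};T)$ becomes isomorphic to the ordered simplicial cochain complex, with constant coefficients $T_\gamma$, of the full simplex on the vertex set $U^\gamma$: every cochain vanishes on tuples with a vertex outside $U^\gamma$, while on tuples lying entirely in $U^\gamma$ (whose intersection automatically contains $\gamma$, hence is nonempty) the coefficient is $T_\gamma$ and all restrictions are identities, so $\delta$ reduces to the usual alternating-sum differential. I would then conclude with the cone contraction based at the vertex $\gamma \in U^\gamma$: the operator $(hc)(\alpha_0, \ldots, \alpha_{n-1}) = c(\gamma, \alpha_0, \ldots, \alpha_{n-1})$ satisfies $\delta h + h \delta = \mathrm{id}$ in degrees $\geq 1$, yielding acyclicity, while inspection of $\delta^1_0$ shows $H^0 = T_\gamma$, matching the global sections $T(X_{\mathcal{A}}) = T_\gamma$.

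The main obstacle is the middle step, the inverse-limit computation of $T(W)$, where one must check that the \emph{full} coherence condition, ranging over all common successors $\delta$ of pairs $\beta, \beta' \in W$ and not merely over $\gamma$, is consistent and collapses to a single copy of $T_\gamma$; this uses the functoriality $\pi^{\gamma\delta} \circ \pi^{\delta\beta} = \pi^{\gamma\beta}$ together with the fact that $\pi^{\gamma\delta}$ is an isomorphism when $\delta \in U^\gamma$ and maps into $T_\delta = 0$ otherwise. Everything else, namely the terminality of $\mathcal{U}_{\mathcal{A}}$ in the refinement preorder and the cone homotopy, is formal and requires no finiteness or cardinality restriction on $U^\gamma$.
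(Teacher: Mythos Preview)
Your proof is correct and follows essentially the same route as the paper's: reduce to the canonical covering $\mathcal{U}_{\mathcal{A}}$, identify all nonzero values of $T$ on intersections with $T_\gamma$, and then use the cone contraction at the vertex $\gamma$ (the paper phrases this last step as ``cocycle implies coboundary'' via $\phi(\beta_1,\ldots,\beta_n)=c(\gamma,\beta_1,\ldots,\beta_n)$, which is exactly your homotopy $h$). Your treatment is in fact more careful than the paper's in making explicit the inverse-limit computation $T(W)\cong T_\gamma$ and the key observation that $W\cap U^\gamma\neq\emptyset$ forces $\gamma\in W$.
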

\begin{proof}
It is sufficient to prove the acyclicity for the covering by the $\{U_\alpha\}_{\alpha\in \mathcal{A}}$. Every space $T_\alpha$ is zero except if $\alpha\rightarrow \gamma$, then we can consider that every cochain
takes its value in $T_\gamma$, whatever being its degree. Considering a cochain $c$ of degree $n$, if it is a cocycle, for any
family $\alpha_1,...,\alpha_{n+1}$ in $\mathcal{A}$, we have, in $T_\gamma$:
\begin{equation}
c(\alpha_1,...,\alpha_{n+1})=\sum_{k=1}^{n+1}(-1)^{k+1}c(\gamma,\alpha_1,...,\widehat{\alpha_k},...,\alpha_{n+1}),
\end{equation}
which tells that $c$ is equal to $\delta \phi$, where $\phi$ is the $(n-1)$-cochain defined by
\begin{equation}
\forall \beta_1,...,\beta_n\in \mathcal{A},\quad \phi(\beta_1,...,\beta_n)=c(\gamma,\beta_1,...,\beta_{n}).
\end{equation}
This establishes the lemma.
\end{proof}

{Remark \ref{rmk:on_vanishing_deg_2}  also holds for this result.}\\

\indent In the following section we will need a variant of the lemma \eqref{lem5}, concerning the relative cohomology.  Suppose that $\mathcal{A}$ is a sub-poset of $\mathcal{B}$, and that we have a presheaf $T$ on $X_{\mathcal B}$ (i.e. for the lower $A$-topology {on} $\mathcal{B}$), which is supported on a
set $U^{\gamma}$ for $\gamma\in \mathcal{B}$, such that every morphism $\pi^{\beta\alpha}$ with $\alpha \to \beta \to \gamma$ is an isomorphism. Then we consider the sheaf $S$ over $\mathcal{A}$, obtained by restriction.

\noindent {We assume that both $\mathcal{A}$ and $\mathcal{B}$ are closed by finite coproducts.}

\begin{lemma}\label{lem5bis}
Under the above hypotheses,
 $\forall n\geq 1, H^{n}(\mathcal{B},\mathcal{A};T,S)=0$ .
\end{lemma}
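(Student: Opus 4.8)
The plan is to deduce the vanishing from the long exact sequence of the pair, after treating the cohomologies of $T$ and of $S$ separately. First I would set up the short exact sequence of \v{C}ech cochain complexes
\[
0 \to C^\bullet(\mathcal B,\mathcal A;T,S)\to C^\bullet(X_{\mathcal B};T)\xrightarrow{\;r\;} C^\bullet(X_{\mathcal A};S)\to 0,
\]
where the relative complex is the kernel of the restriction $r$. Surjectivity of $r$ comes from extension by zero: an $\mathcal A$-cochain is extended by $0$ on every tuple having an index outside $\mathcal A$, while on a tuple $(\alpha_0,\dots,\alpha_n)$ of support elements of $\mathcal A$ the restriction map $T(U_{\alpha_0}^{\mathcal B}\cap\cdots)\to S(U_{\alpha_0}^{\mathcal A}\cap\cdots)$ is an isomorphism, so the value lifts. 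The associated long exact sequence
\[
\cdots\to H^{n}(\mathcal B,\mathcal A)\to H^{n}(X_{\mathcal B};T)\xrightarrow{\,r^\ast\,} H^{n}(X_{\mathcal A};S)\to H^{n+1}(\mathcal B,\mathcal A)\to\cdots
\]
then reduces the lemma to three facts: $H^{n}(X_{\mathcal B};T)=0$ and $H^{n}(X_{\mathcal A};S)=0$ for all $n\ge 1$, and surjectivity of $r^\ast$ in degree $0$. Indeed, for $n\ge 2$ the group $H^n(\mathcal B,\mathcal A)$ is squeezed between $H^{n-1}(X_{\mathcal A};S)=0$ and $H^n(X_{\mathcal B};T)=0$, while $H^1(\mathcal B,\mathcal A)=\coker\!\big(r^\ast\colon H^0(X_{\mathcal B};T)\to H^0(X_{\mathcal A};S)\big)$.

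The first fact is exactly Lemma \ref{lem5} applied to the poset $\mathcal B$ itself: $T$ is supported on the principal open $U^\gamma$ with $\gamma\in\mathcal B$, and all its transition maps down to $\gamma$ are isomorphisms, so $T$ is acyclic on $X_{\mathcal B}$ with $H^0(X_{\mathcal B};T)\cong T_\gamma$. For the second fact, $S=T|_{\mathcal A}$ is supported on $U^\gamma\cap\mathcal A$. When $\gamma\in\mathcal A$ this is the principal open $U^\gamma_{\mathcal A}$ and Lemma \ref{lem5} applies verbatim. In general $\gamma$ need not lie in $\mathcal A$, and here the hypothesis that $\mathcal A$ is closed under finite coproducts is decisive: for $\alpha,\alpha'$ above $\gamma$ in $\mathcal A$ the coproduct $\alpha\vee\alpha'$ is again above $\gamma$, so $U^\gamma\cap\mathcal A$ is closed under finite coproducts. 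Consequently every finite family $U_{\alpha_0},\dots,U_{\alpha_n}$ of canonical opens attached to support elements has nonempty intersection $U_{\alpha_0\vee\cdots\vee\alpha_n}$, and the \v{C}ech nerve of $S$ is a full simplex on the support. Identifying all the pairwise isomorphic groups $S(U_{\alpha_0}\cap\cdots)\cong T_\gamma$ through the iso transition maps, the cone over any fixed support element $\alpha_\ast$, namely $\phi(\beta_1,\dots,\beta_n)=c(\alpha_\ast,\beta_1,\dots,\beta_n)$, furnishes a contracting homotopy in degrees $\ge 1$, exactly as in the proof of Lemma \ref{lem5} (which is the special case $\alpha_\ast=\gamma$). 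This yields $H^{n}(X_{\mathcal A};S)=0$ for $n\ge 1$ and $H^0(X_{\mathcal A};S)\cong T_\gamma$ (and $S=0$, trivially acyclic, when the support is empty).

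For the third fact I would observe that $r^\ast$ in degree $0$ sends the global section of $T$ determined by $t\in T_\gamma$ to the global section of $S$ determined by the same $t$; under the identifications $H^0(X_{\mathcal B};T)\cong T_\gamma\cong H^0(X_{\mathcal A};S)$ it is the identity, hence surjective (and when the support of $S$ is empty the target is $0$ and surjectivity is automatic). Feeding the three facts into the long exact sequence gives $H^n(\mathcal B,\mathcal A;T,S)=0$ for every $n\ge 1$, as claimed.

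The main obstacle is the second fact when $\gamma\notin\mathcal A$: the support of $S$ is then not a principal open, so Lemma \ref{lem5} cannot be invoked directly and there is no canonical apex for the cone. The resolution is precisely the closure of $\mathcal A$ under finite coproducts, which makes the support a downward-directed meet-closed set, turns its nerve into a contractible full simplex, and lets \emph{any} support element serve as apex. A secondary point needing care is the compatibility of the coproducts computed in $\mathcal A$ and in $\mathcal B$, which is what guarantees that the restriction maps $T(\cap U^{\mathcal B})\to T(\cap U^{\mathcal A})$ are isomorphisms and hence that $r$ is surjective on cochains; this holds because $\mathcal A\hookrightarrow\mathcal B$ is a full sub-poset containing the relevant coproducts, so the same object satisfies the universal property in both.
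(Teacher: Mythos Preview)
Your argument is correct but organized quite differently from the paper. The paper works \emph{directly} on the relative complex: given a relative cocycle $c$, it writes down the cone $\phi(\beta_1,\dots,\beta_n)=c(\alpha,\beta_1,\dots,\beta_n)$ for a single well-chosen apex $\alpha$ (namely $\alpha=\gamma$ when $U^{\gamma}\cap\mathcal A=\emptyset$, and any $\alpha\in\mathcal A\cap U^{\gamma}$ otherwise), and checks in each case that $\phi$ is itself a \emph{relative} cochain, so that $c=\delta\phi$ inside $C^{\bullet}(\mathcal B,\mathcal A;T,S)$. That is one short computation.

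You instead route through the long exact sequence of the pair and reduce to three absolute statements (acyclicity of $T$ on $\mathcal B$, acyclicity of $S$ on $\mathcal A$, and the degree-zero surjection), each of which you establish with the same cone device. This is longer but more modular: it displays the lemma as a formal consequence of Lemma~\ref{lem5} for $T$, a mild extension of Lemma~\ref{lem5} for $S$ (with apex $\alpha_*\in U^{\gamma}\cap\mathcal A$ replacing $\gamma$), and the identification $H^0\cong T_\gamma$ on both sides. The paper's direct argument is more economical and never needs to discuss the absolute cohomology of $S$; your approach makes the role of the closure of $\mathcal A$ under coproducts more visible, since that is exactly what guarantees the nerve over the support in $\mathcal A$ is a full simplex and hence that \emph{any} support element can serve as apex. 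One small caution: your final remark that the coproducts computed in $\mathcal A$ and in $\mathcal B$ agree is stronger than what you use and not true in general for a full sub-poset; all your argument actually needs, and what does hold, is that the $\mathcal A$-coproduct of support elements exists and remains in $U^{\gamma}$.
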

See Appendix \ref{appendix3:cech} for the definition of  relative cohomology.
\begin{proof}
It is sufficient to prove the result for
the cohomology of the covering by the open sets $\{U_\beta\}_{\beta\in \mathcal{B}}$, and their traces on $\mathcal{A}$.
By definition, a relative cochain $c\in C^{n}(\mathcal{B},\mathcal{A};T,S)$ takes the value
$0$ on every family of $n+1$ elements of $\mathcal{A}$. If it is a cocycle, for any
family $\alpha,\alpha_1,...,\alpha_{n+1}$ of $n+2$ elements in $\mathcal{B}$, we have, in $T_\gamma$:
\begin{equation}
c(\alpha_1,...,\alpha_{n+1})=\sum_{k=1}^{n+1}(-1)^{k+1}c(\alpha,\alpha_1,...,\widehat{\alpha_k},...,\alpha_{n+1}),
\end{equation}
which tells that $c$ is equal to $\delta \phi$, where $\phi$ is the $(n-1)$-cochain defined by
\begin{equation}
\forall \beta_1,...,\beta_{n} \in \mathcal{B},\quad \phi(\beta_1,...,\beta_{n})=c(\alpha,\beta_1,...,\beta_{n}).
\end{equation}
If $U^{\gamma}$ has empty intersection with $\mathcal{A}$, taking $\alpha=\gamma$, we have $\phi\in C^{n-1}(\mathcal{B},\mathcal{A};T,S)$,
and $c=d\phi$. And if $\alpha$ belongs to $\mathcal{A}\cap U^{\gamma}$, the cochain
$\phi$ belongs to $C^{n-1}(\mathcal{B},\mathcal{A};T,S)$, and $c=d\phi$. This establishes the lemma.
\end{proof}

\section{Factorization of free sheaves}\label{sec:free-sheaves}

\subsection{Free presheaves and intersection properties}
In many applications to  Statistical Physics or Bayesian Learning, the presheaves that appear are free modules, generated
by subsets of a fixed set.

 A set $I$ is given (non-necessarily finite) and the poset $\mathcal{A}$ is a sub-poset (i.e. a subcategory) of the poset $(\mathcal{P}_f(I),\to)$
of finite subsets of $I$, ordered in such a way that $A\to B$ iff $B\subseteq A$. The poset $\mathcal{A}$ is automatically of locally finite dimension. The pair $(\mathcal{A},I)$ is named
an \emph{hypergraph}. We consider a covariant functor (a.k.a. copresheaf) of sets  $E$ on $\mathcal A$,
such that, for every $\alpha\in \mathcal{A}$, the set $E_\alpha=E(\alpha)$ can be identified with the cartesian product  $\prod_{i\in \alpha} E_i$
by surjective maps $\pi^{i\alpha}=E(\alpha\to i)$. By naturality, all the  maps $\pi^{\beta\alpha}:E_\alpha\rightarrow E_\beta$ are surjective.  If the empty set $\emptyset$ belongs to $\mathcal{A}$, the set $E_{\emptyset}$ is a singleton $*=\{\emptyset\}$. In this case, for every element $\alpha\in \mathcal{A}$, there exists a unique map $\pi^{\emptyset\alpha}:E_\alpha\rightarrow E_\emptyset$.

 Note that $E$ is a sheaf of sets for the lower $A$-topology on $\mathcal{A}$, and for every arrow $\alpha\rightarrow\beta$, the map $\pi^{\beta\alpha}$ is the restriction of sections from the open set $U_\alpha$ to the open set $U_\beta$.

 A commutative field $\mathbb{K}$ of any characteristic is given. For every $\alpha\in \mathcal{A}$, we define $V_\alpha$ as the space of all functions from $E_\alpha$ to $\mathbb{K}$. We say that $V$ is the \emph{free
presheaf} generated by $E$.  If $\emptyset\in \mathcal{A}$, the space $V_\emptyset$ is canonically isomorphic to $\mathbb{K}$.  If $\alpha\rightarrow\beta$, i.e. $\beta\subseteq \alpha$, we get a natural application $j_{\alpha\beta}:V_\beta\rightarrow V_\alpha$, which is linear and injective.  {Therefore $V$ is a particular case of injective presheaf over $\mathcal{A}$.} As before, $V_{\alpha\beta}$ designates the image of $j_{\alpha\beta}$ in $V_\alpha$. Using the projection $\pi^{\beta\alpha}$ we can identify $V_{\alpha\beta}$ with the space of numerical functions of $x_\alpha$ that depend only on the variables $x_\beta$, these functions are named the \emph{cylindrical functions with respect to $\pi^{\beta\alpha}$.}

\begin{definition}[Reduced functor]
The sub-functor of constants $\mathbb{K}_\mathcal{A}$ maps each $\alpha \in \mathcal A$ to  the one dimensional
vector subspace $\mathbb{K}_\alpha$ of constant functions, embedded in $V_\alpha$. The \emph{reduced functor} (or reduced free presheaf) $\overline{V}_\alpha;\alpha\in \mathcal{A}$ is made of the quotient vector spaces $V_\alpha/\mathbb{K}_\alpha$.
\end{definition}

If $\emptyset\in \mathcal{A}$, for every $\alpha\in \mathcal{A}$, we have $\mathbb{K}_\alpha=V_{\alpha\emptyset}$.

\begin{definition}[Intersection property]
The hypergraph $(\mathcal{A},I)$ satisfies the \emph{strong (resp. weak) intersection proprety}, if, for every pair $(\alpha,\alpha')$ in $\mathcal{A}$ (resp. every pair having non-empty intersection in $\mathcal{P}(I)$), the intersection $\alpha\cap \alpha'$ belongs to $\mathcal{A}$.
\end{definition}

\begin{remark} If $\mathcal{A}$ satisfies the strong intersection property, all the coproducts $\alpha\vee\alpha'$ exist; if $\mathcal{A}$ satisfies the weak intersection property, the coproducts exist conditionally, i.e. $\alpha\vee\alpha'$ exists as soon as $\alpha$ and $\alpha'$ have a common majorant (under the relation $\to$).
\end{remark}

If $\mathcal{A}$ has non-intersecting elements, the strong intersection property implies that the empty set $\emptyset$ belongs to $\mathcal{A}$, then $\mathcal{A}$ possesses a unique final element, that is $\emptyset$.
If $\mathcal{A}$ satisfies the weak intersection property, it possesses conditional coproducts (here intersections) in the categorical sense of Section \ref{sec:posets_alexandroff}.

\begin{proposition}\label{prop3:strong_intersection_property}
If $\mathcal{A}$ has the strong intersection property, the condition \ref{condition_G} is satisfied by the free presheaf $V$.
\end{proposition}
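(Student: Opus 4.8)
The plan is to exploit the product structure of the free presheaf directly, using partial substitution operators in place of the full interaction decomposition. Fix $\alpha,\beta\in\mathcal A$ with $\alpha\to\beta$, that is $\beta\subseteq\alpha$. Recall that $V_{\alpha\gamma}=j_{\alpha\gamma}(V_\gamma)$ is exactly the subspace of functions on $E_\alpha\cong\prod_{i\in\alpha}E_i$ that factor through $\pi^{\gamma\alpha}$, i.e.\ depend only on the variables $x_\gamma$. In these terms, the left-hand side of \ref{condition_G} is spanned by functions depending on $x_\gamma$ for $\gamma\in\mathcal A$ with $\gamma\subsetneq\alpha$ and $\beta\not\subseteq\gamma$, while the right-hand side is spanned by functions depending on $x_\gamma$ for $\gamma\in\mathcal A$ with $\gamma\subsetneq\beta$.

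First I would choose, for each $i\in\alpha\setminus\beta$, a base point $e_i^{0}\in E_i$ (harmless, since an empty $E_i$ forces $V_\alpha=0$), and define the linear \emph{conditioning operator} $A_\beta\colon V_\alpha\to V_\alpha$ by substituting $e_i^0$ into the coordinate $i$ for every $i\in\alpha\setminus\beta$; concretely $A_\beta=\prod_{i\in\alpha\setminus\beta}P_i$, where $P_i$ replaces the $i$-th argument by $e_i^0$. This operator has three properties that drive the argument: (1) it restricts to the identity on $V_{\alpha\beta}$, since a function of $x_\beta$ alone is unaffected by fixing the remaining coordinates; (2) for any $\gamma$ with $\gamma\subseteq\alpha$ it maps $V_{\alpha\gamma}$ into $V_{\alpha,\gamma\cap\beta}$, because after fixing the coordinates outside $\beta$ a function of $x_\gamma$ depends only on the still-free variables $x_{\gamma\cap\beta}$; and (3) by the strong intersection property $\gamma\cap\beta$ again belongs to $\mathcal A$.

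Then I would conclude as follows. Take $f$ in the intersection on the left of \ref{condition_G} and write $f=\sum_k g_k$ with $g_k\in V_{\alpha\gamma_k}$, where each $\gamma_k\in\mathcal A$ satisfies $\gamma_k\subsetneq\alpha$ and $\beta\not\subseteq\gamma_k$. Applying $A_\beta$ and using property (1) gives $f=A_\beta f=\sum_k A_\beta g_k$. By property (2), $A_\beta g_k\in V_{\alpha,\gamma_k\cap\beta}$, and the condition $\beta\not\subseteq\gamma_k$ guarantees an index $i\in\beta\setminus\gamma_k$, so $\gamma_k\cap\beta\subsetneq\beta\subseteq\alpha$; combined with property (3) this shows $\gamma_k\cap\beta$ is one of the indices appearing in the right-hand sum of \ref{condition_G} (here $\gamma_k\cap\beta\subsetneq\beta$ automatically yields $\gamma_k\cap\beta\subsetneq\alpha$). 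Hence $f\in\sum_k V_{\alpha,\gamma_k\cap\beta}$ lies in the right-hand side, which is precisely \ref{condition_G}.

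The only genuinely delicate point is the verification of property (2), namely that substitution of the base points sends a cylindrical function relative to $\pi^{\gamma\alpha}$ to a cylindrical function relative to $\pi^{(\gamma\cap\beta)\alpha}$; everything else is bookkeeping about the order relation. I would note that this argument needs no finiteness of the fibers $E_i$, no hypothesis on $\mathbb K$, and never passes to the reduced functor, so it applies verbatim to the free presheaf $V$ as stated.
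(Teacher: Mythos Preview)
Your proof is correct and follows essentially the same route as the paper: both arguments fix base points $e_i^0$ for $i\in\alpha\setminus\beta$, substitute them into the decomposition $f=\sum_k g_k$, and observe that each summand becomes cylindrical over $\gamma_k\cap\beta\subsetneq\beta$, which lies in $\mathcal A$ by the strong intersection property. Your presentation via the conditioning operator $A_\beta$ with its three properties is slightly more streamlined than the paper's coordinate-level computation, but the underlying idea is identical.
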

\begin{proof}
Consider $\alpha\to \beta$ in $\mathcal A$ (i.e. $\beta \subseteq \alpha$), and a vector $v$ in $V_{\alpha\beta}$ that satisfies
\begin{equation}
v=\sum_{\substack{\gamma: \alpha\rightarrow \gamma, \gamma\neq\alpha,  \\
\gamma  \not\to \beta }} v_\gamma,
\end{equation}
for some $v_\gamma\in V_{\alpha\gamma}$.

The above decomposition tells that for every $x_\beta\in E_\beta$, and for any collection of elements $\{y_j\}_{j\in \alpha\backslash\beta}$, where  $y_j\in E_j$, we have
\begin{equation}
v(x_\beta,y_{\alpha\setminus\beta})= \sum_{\substack{\gamma: \alpha\rightarrow \gamma, \gamma\neq\alpha,  \\
\gamma \not\to \beta }} v_\gamma(x_\gamma);
\end{equation}
where on the right, the components of $x_\gamma$ are $x_i$ with $i\in \beta\cap\gamma$ and $y_j$ with $j\in (\alpha\setminus\beta)\cap\gamma$.\\
\indent For each index $k\in \alpha\setminus\beta$, we choose a fixed $y^{0}_k$, and replace everywhere in the formula the variable $x_k$
by this value. The formula continues to hold true. In the expression
$v_\gamma(x_\gamma)$, the variables $x_i$ that do not belong to $\beta\cap\gamma$, are
constants $y^{0}_k; k\in \alpha\backslash\beta$.  Moreover the intersection
of $\beta$ and $\gamma$ is a strict subset of $\beta$, because $\beta$ is assumed to be not
included in $\gamma$.\\
This gives
\begin{equation}
v(x_\beta, y^{0}_{\alpha\setminus\beta})=\sum_{\substack{\gamma: \alpha\rightarrow \gamma, \gamma\neq\alpha, \\
\beta\to \beta\cap \gamma, \beta\neq\beta\cap\gamma }} v^{0}_{\beta\cap\gamma}(x_{\beta\cap \gamma}).
\end{equation}
And, for all possible $\omega\in \mathcal{A}$, $\omega\subset\beta$, $\beta\neq\omega$, if we bring together the $\gamma$ such that
$\alpha\rightarrow \gamma, \gamma\neq\alpha,
\beta\cap \gamma=\omega$, this gives
\begin{equation}
v(x_\beta, y^{0}_{\alpha\setminus\beta})=\sum_{\substack{\omega: \alpha\rightarrow \omega, \omega\neq\alpha,  \\
\beta\to \omega, \beta\neq\omega }} w_{\omega}(x_\omega).
\end{equation}
Which is the expected result.
\end{proof}

\begin{remark}
Without the strong intersection property the result is false. Take for instance, $I=\{ i,j\}$, $\mathcal{A}=\{ i;j;\alpha=(i,j)\}$,
a non-zero constant function belongs to $V_{\alpha i}$, but cannot belong to the image of a strict subset of $\{ j\}$.
\end{remark}

\begin{proposition}\label{prop4:weak_IP_reduced_V}
If $\mathcal{A}$ has the weak intersection property, the condition \ref{condition_G} is satisfied by the reduced functor $\overline{V}$.
\end{proposition}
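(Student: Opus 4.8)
The plan is to mirror the proof of Proposition \ref{prop3:strong_intersection_property}, adapting it to the quotient $\overline{V}_\alpha = V_\alpha/\mathbb{K}_\alpha$ and exploiting the fact that constant functions are killed in the reduced functor. Recall that in this hypergraph setting $\alpha \to \gamma$ means $\gamma \subseteq \alpha$, that $\gamma \not\to \beta$ means $\beta \not\subseteq \gamma$, and that $\overline{V}_{\alpha\gamma} = V_{\alpha\gamma}/\mathbb{K}_\alpha$ since every constant is a cylindrical function, so in particular $\mathbb{K}_\alpha \subseteq V_{\alpha\beta}$ for every $\beta$. Fix $\alpha \to \beta$ and take a class $\bar v \in \overline{V}_{\alpha\beta}$ lying in $\sum_{\gamma} \overline{V}_{\alpha\gamma}$, the sum ranging over $\gamma$ with $\alpha \xrightarrow{\neq} \gamma$ and $\gamma \not\to \beta$. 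First I would lift this relation to $V_\alpha$: choosing representatives $v \in V_{\alpha\beta}$ of $\bar v$ and $v_\gamma \in V_{\alpha\gamma}$ of the components, the difference $v - \sum_\gamma v_\gamma$ is a constant $c \in \mathbb{K}_\alpha$; replacing $v$ by $v - c$ (still in $V_{\alpha\beta}$ and still representing $\bar v$) yields an honest equality $v = \sum_\gamma v_\gamma$ in $V_\alpha$.

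Next I would run the variable-fixing argument verbatim from Proposition \ref{prop3:strong_intersection_property}. Because $v \in V_{\alpha\beta}$ depends only on $x_\beta$, substituting fixed values $y^0_k$ for the coordinates $k \in \alpha \setminus \beta$ leaves $v$ unchanged while collapsing each $v_\gamma$ to a function $v^0_{\beta\cap\gamma}$ of $x_{\beta\cap\gamma}$ alone; since $\beta \not\subseteq \gamma$, one has $\beta\cap\gamma \subsetneq \beta$. Grouping the resulting terms according to the value $\omega = \beta\cap\gamma$, I obtain
\begin{equation}
v(x_\beta) = \sum_{\omega \subsetneq \beta} w_\omega(x_\omega),
\end{equation}
where $w_\omega$ collects the contributions of all $\gamma$ with $\beta\cap\gamma = \omega$ and depends only on $x_\omega$.

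Here is where the weak intersection property and the passage to the reduced functor enter, and this is the step that requires care. For each $\omega = \beta\cap\gamma$ appearing with $\omega \neq \emptyset$, the weak intersection property guarantees $\omega \in \mathcal{A}$, and then $\emptyset \neq \omega \subsetneq \beta \subseteq \alpha$ gives $\alpha \xrightarrow{\neq} \omega$ and $\beta \xrightarrow{\neq} \omega$, so that the class $\bar w_\omega \in \overline{V}_{\alpha\omega}$ contributes to the right-hand side of condition \ref{condition_G}. The remaining term, corresponding to $\omega = \emptyset$, is a function of no variable, hence a constant in $\mathbb{K}_\alpha$; this is exactly the term whose index $\emptyset$ need not belong to $\mathcal{A}$, but it becomes zero in $\overline{V}_\alpha$. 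Passing to classes I therefore get $\bar v = \sum_{\emptyset \neq \omega \subsetneq \beta} \bar w_\omega \in \sum_{\gamma:\, \alpha \xrightarrow{\neq}\gamma,\, \beta \xrightarrow{\neq}\gamma} \overline{V}_{\alpha\gamma}$, which is condition \ref{condition_G}. The main obstacle is precisely the bookkeeping around the empty intersection: under the strong intersection property one has $\emptyset \in \mathcal{A}$ and the $\omega = \emptyset$ term is legitimate, whereas under the merely weak property it may fail to belong to $\mathcal{A}$, and the whole point of reducing is that quotienting by the constants $\mathbb{K}_\alpha$ removes this obstruction. I would also record the routine compatibility of the lift/quotient manipulations with the cylindrical identifications, which follows from $\mathbb{K}_\alpha \subseteq V_{\alpha\beta}$.
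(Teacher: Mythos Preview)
Your proposal is correct and follows essentially the same approach as the paper: the paper's proof simply says to repeat the argument of Proposition~\ref{prop3:strong_intersection_property} while distinguishing whether $\beta\cap\gamma$ is empty or not, observing that the empty case yields a constant. You have spelled out in addition the (routine) lifting of the relation from $\overline V_\alpha$ to $V_\alpha$ by absorbing the constant discrepancy into $v$, which the paper leaves implicit, but the substance is identical.
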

\begin{proof}
Repeat the proof of Proposition \ref{prop3:strong_intersection_property}, but distinguish the cases where $\beta\cap\gamma$ is empty or not. When it is empty
the respective function $v_\beta$ of $(x_\gamma,y_{\gamma'})$ belongs to the constants.
\end{proof}

 Now remind that, by construction, the poset $\mathcal{A}$ is of locally finite dimension (it is even locally finite), then the following proposition results directly from the prop. \ref{prop3:strong_intersection_property}
(resp. \ref{prop4:weak_IP_reduced_V})
and the Theorem \ref{thm2:extrafine_and_conditionG}.

\begin{theorem}\label{thm3:IP_implies_extrafine}
If $\mathcal{A}$ has  the strong (resp. weak) intersection property, the {free presheaf} $V$ (resp. $\overline{V}$) {has an interaction decomposition.}
\end{theorem}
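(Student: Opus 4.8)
The plan is to deduce the theorem directly by chaining two results already established in the excerpt, since the statement is deliberately set up as a corollary. The essential observation is that the theorem makes two hypothesis-specific claims, and each reduces to verifying condition \ref{condition_G} followed by an application of Theorem \ref{thm2:extrafine_and_conditionG}(1). First I would recall the standing fact, noted in Section \ref{sec:free-sheaves}, that because $\mathcal{A}$ is a sub-poset of $(\mathcal{P}_f(I),\to)$, it is automatically of locally finite dimension in the lower direction: any strictly descending chain of finite subsets must terminate, and the dimension of $\alpha$ is bounded by its cardinality $|\alpha|$. This is precisely the finiteness hypothesis required to invoke Theorem \ref{thm2:extrafine_and_conditionG}(1).

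For the strong intersection case, I would argue as follows. By Proposition \ref{prop3:strong_intersection_property}, the strong intersection property guarantees that the free presheaf $V$ satisfies condition \ref{condition_G}. Combining this with the local finite dimensionality of $\mathcal{A}$ just recalled, Theorem \ref{thm2:extrafine_and_conditionG}(1) applies verbatim and yields an interaction decomposition of $V$. For the weak intersection case the reasoning is identical with $V$ replaced by $\overline{V}$: Proposition \ref{prop4:weak_IP_reduced_V} establishes that $\overline{V}$ satisfies condition \ref{condition_G} under the weak intersection property, and the same finiteness of $\mathcal{A}$ lets Theorem \ref{thm2:extrafine_and_conditionG}(1) produce an interaction decomposition of $\overline{V}$.

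The one point that requires a moment of care, and which I regard as the only genuine (if minor) obstacle, is confirming that Theorem \ref{thm2:extrafine_and_conditionG} applies to $\overline{V}$ exactly as it does to $V$. This amounts to checking that $\overline{V}$ is itself a \emph{linear injective presheaf}: for each arrow $\alpha \to \beta$ the induced map $\overline{V}_\beta = V_\beta/\mathbb{K}_\beta \to V_\alpha/\mathbb{K}_\alpha = \overline{V}_\alpha$ is well-defined and injective. Well-definedness is immediate since $j_{\alpha\beta}$ carries $\mathbb{K}_\beta$ into $\mathbb{K}_\alpha$ (constants restrict to constants), and injectivity follows because $j_{\alpha\beta}^{-1}(\mathbb{K}_\alpha) = \mathbb{K}_\beta$, as a cylindrical function over $\beta$ that becomes constant on $E_\alpha$ must already be constant on $E_\beta$ given the surjectivity of $\pi^{\beta\alpha}$. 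Once this is verified, $\overline{V}$ is an injective presheaf on the same poset $\mathcal{A}$ of locally finite dimension, and the cited theorem produces the interaction decomposition without further modification. I would therefore keep the proof to a few lines, stating explicitly that the result follows from Propositions \ref{prop3:strong_intersection_property} and \ref{prop4:weak_IP_reduced_V} together with Theorem \ref{thm2:extrafine_and_conditionG}, after noting the automatic local finite dimensionality of hypergraph posets.
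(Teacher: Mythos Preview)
Your proposal is correct and matches the paper's own proof essentially line for line: the paper simply observes that $\mathcal{A}$, being a sub-poset of $\mathcal{P}_f(I)$, is of locally finite dimension, and then invokes Proposition~\ref{prop3:strong_intersection_property} (resp.\ Proposition~\ref{prop4:weak_IP_reduced_V}) together with Theorem~\ref{thm2:extrafine_and_conditionG}. Your extra verification that $\overline{V}$ is itself a linear injective presheaf is a detail the paper leaves implicit but is indeed needed for the argument to go through.
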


Theorem \ref{thm3:IP_implies_extrafine} generalizes a theorem of existence of an interaction decomposition for factor spaces that, under different forms, has been known for  long time in probability theory, but only for finite posets and finite dimensional vector spaces, cf. \cite{Kellerer1964,Matus1988,Speed,Lauritzen}.\\

As in the preceding section, denote by $V'_\alpha$ the sum of the $V_{\alpha\beta}$ over $\beta\subsetneq \alpha$, (resp.
$\overline{V}'_\alpha$ the sum of the $\overline{V}_{\alpha\beta}$ over $\beta\subsetneq \alpha$)
and take a
supplementary subspace $S_\alpha$ of $V'_\alpha$ in $V_\alpha$ (resp. $\overline{S}_\alpha$ of $\overline{V}'_\alpha$ in $\overline{V}_\alpha$). The \emph{interaction decomposition} gives
\begin{equation}
\forall\alpha\in \mathcal{A},\quad V_\alpha=\bigoplus_{\beta\subseteq \alpha}S_\beta,
\end{equation}
resp.
\begin{equation}
\forall\alpha\in \mathcal{A},\quad \overline{V}_\alpha=\bigoplus_{\beta\subseteq \alpha}\overline{S}_\beta.
\end{equation}

\subsection{Duality: Free copresheaves}

 Note $F_\alpha=\mathbb{K}^{(E_\alpha)}$ the space of functions with finite supports, which can be
seen as the vector spaces freely generated by the set $E_\alpha$ over the field $\mathbb{K}$. Its dual space is $V_\alpha=\mathbb{K}^{E_\alpha}$
and the transpose of the natural map $\pi^{\beta\alpha}:F_\alpha\rightarrow F_\beta$ is $j_{\alpha\beta}$. The vector spaces $F_\alpha$ and the maps $\pi^{\beta\alpha}$
define a covariant functor (i.e. a copresheaf) over $\mathcal{A}$ (resp. a sheaf on $X_{\mathcal A}$)  named the \emph{free copresheaf} (resp. the \emph{free sheaf}) generated by $E$.

We can apply Corollary \ref{cor1:cohomology_predual} in the
preceding section to get the following result.

\begin{proposition}\label{prop5}
When $\mathcal{A}$ satisfies the strong intersection property {and the finiteness condition $(a^{*})$,} $F$ is acyclic and $H^{0}(X_{\mathcal A},F)\cong \bigoplus_{\alpha\in \mathcal{A}}S_\alpha^*$.
\end{proposition}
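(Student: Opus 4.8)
The plan is to verify that the free copresheaf $F$ fits exactly into the duality framework of the preceding subsection and satisfies every hypothesis of Corollary \ref{cor1:cohomology_predual}, then to read off the two conclusions and identify the stalks $T_\alpha$ with $S_\alpha^{*}$.

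First I would set up the predual/dual pairing. By construction $F_\alpha=\mathbb{K}^{(E_\alpha)}$ is the free vector space on $E_\alpha$, whose algebraic dual is $V_\alpha=\mathbb{K}^{E_\alpha}$; moreover each structure map $\pi^{\beta\alpha}\colon F_\alpha\to F_\beta$ is surjective, being induced by the surjection of sets $E_\alpha\to E_\beta$, and its transpose is precisely $j_{\alpha\beta}\colon V_\beta\to V_\alpha$. Thus the covariant functor $F$ plays exactly the role demanded at the start of the Duality subsection: it is a copresheaf with surjective arrows such that $V_\alpha=F_\alpha^{*}$ and $j_{\alpha\beta}={}^{t}\pi^{\beta\alpha}$.

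Next I would check the three structural hypotheses. In an hypergraph the coproduct coincides with the set-intersection, so the strong intersection property ($\alpha\cap\alpha'\in\mathcal{A}$ for all $\alpha,\alpha'$) means that \emph{all} coproducts exist in $\mathcal{A}$, hence a fortiori conditionally, as recorded in the remark following the definition of the intersection property. By Theorem \ref{thm3:IP_implies_extrafine} the same strong intersection property yields an interaction decomposition of $V$, which is the standing assumption of the whole Duality subsection. Finally, condition $(a^{*})$ is assumed in the statement. Therefore every hypothesis of Corollary \ref{cor1:cohomology_predual} holds. Applying it gives at once that the sheaf induced by $F$ on $X_{\mathcal A}$ is acyclic and that $H^{0}(X_{\mathcal A},F)\cong\bigoplus_{\alpha\in\mathcal{A}}T_\alpha$. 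To finish, I would invoke the computation made inside the proof of that corollary, where $T^{\alpha}(\beta)=(j_{\beta\alpha}S_\alpha)^{*}$ whenever $\beta\to\alpha$ and is zero otherwise; taking $\beta=\alpha$ and using $j_{\alpha\alpha}=\id$ gives $T_\alpha=S_\alpha^{*}$, so that $H^{0}(X_{\mathcal A},F)\cong\bigoplus_{\alpha\in\mathcal{A}}S_\alpha^{*}$.

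The step I would watch most carefully is this final identification in the possibly infinite-dimensional setting: when $E_\alpha$ is infinite the spaces $V_\alpha$, $V'_\alpha$ and $S_\alpha$ are all infinite-dimensional, so one must be sure that the transposed projectors ${}^{t}e_\alpha$, restricted to the finite-support space $F$, genuinely cut out stalks isomorphic to the full algebraic dual $S_\alpha^{*}$ rather than some proper subspace of it. Since Corollary \ref{cor1:cohomology_predual} already supplies both the acyclicity and the isomorphism $H^{0}\cong\bigoplus_\alpha T_\alpha$ with $T_\alpha$ the relevant stalk, the content of Proposition \ref{prop5} reduces to confirming these hypotheses and recording the identification $T_\alpha\cong S_\alpha^{*}$; no further cohomological computation is required.
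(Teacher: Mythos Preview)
Your proposal is correct and follows exactly the paper's approach: the proposition is stated in the paper as a direct application of Corollary~\ref{cor1:cohomology_predual}, and you have spelled out precisely why its hypotheses (interaction decomposition of $V$ via Theorem~\ref{thm3:IP_implies_extrafine}, conditional coproducts from the strong intersection property, and the assumed $(a^{*})$) are satisfied, together with the identification $T_\alpha\cong S_\alpha^{*}$ already recorded in the proof of that corollary. Your caveat about the infinite-dimensional case is reasonable, but the paper invokes the same identification without further comment.
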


\noindent Respectively, denote by $\overline{F}_\alpha$ the subspace of $F_\alpha=\mathbb{K}^{(E_\alpha)}$ defined by annihilating the sum of
the coordinates in the canonical basis. Its dual space is $\overline{V}_\alpha=\mathbb{K}^{E_\alpha}/\mathbb{K}_\alpha$. The transpose of the natural map $\pi^{\beta\alpha}:\overline{F}_\alpha\rightarrow \overline{F}_\beta$ is again $j_{\alpha\beta}$. This forms a sheaf over $X_\mathcal{A}$, named
the \emph{restricted free sheaf} generated by $E$. As before, we obtain the following.

\begin{proposition}\label{prop6}
When $\mathcal{A}$ satisfies the weak intersection property {and the finiteness condition $(a^{*})$,} the sheaf $\overline{F}$ over $X_\mathcal{A}$ is acyclic and $H^{0}(X_{\mathcal A},\overline{F})\cong \bigoplus_{\alpha\in \mathcal{A}}\overline{S}_\alpha^*$.
\end{proposition}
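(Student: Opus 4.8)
The plan is to run the argument of Proposition \ref{prop5} verbatim, substituting the reduced objects for the unreduced ones: the reduced functor $\overline V$ in place of $V$, the restricted free sheaf $\overline F$ in place of $F$, and the reduced interaction subspaces $\overline S_\alpha$ in place of $S_\alpha$. First I would note that, since $\mathcal A$ has the weak intersection property, Theorem \ref{thm3:IP_implies_extrafine} supplies an interaction decomposition $\overline V_\alpha = \bigoplus_{\beta \subseteq \alpha} \overline S_\beta$ of the reduced functor, with associated orthogonal projectors $e_\alpha$ onto the interaction subspaces $\overline S_\alpha$; this is exactly where the distinction between weak and strong intersection enters, through Proposition \ref{prop4:weak_IP_reduced_V}.

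Next I would check that $\overline F$ is a legitimate predual for $\overline V$ in the sense required by the duality subsection. By construction $\overline V_\alpha = \overline F_\alpha^{*}$ and $j_{\alpha\beta} = {}^{t}\pi^{\beta\alpha}$, so the only point to verify is that the structure maps $\pi^{\beta\alpha} \colon \overline F_\alpha \to \overline F_\beta$ are surjective. This holds because pushing a finitely supported function forward along the surjection $E_\alpha \to E_\beta$ (summation over fibres) preserves the total sum of coordinates, hence maps the sum-zero hyperplane $\overline F_\alpha$ onto $\overline F_\beta$. With surjectivity in hand, the duality lemma applies: the transposed projectors $e_\alpha^{*} = {}^{t}e_\alpha|_{\overline F}$ are orthogonal projectors summing (in the finite sense) to $\id_{\overline F}$, and they decompose $\overline F$ as a direct sum of the images $T^\alpha = \im e_\alpha^{*}$.

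I would then invoke Corollary \ref{cor1:cohomology_predual}. Its two hypotheses hold in our situation: the weak intersection property yields conditional finite coproducts in $\mathcal A$ (the coproducts being the intersections of finite subsets of $I$), and $(a^{*})$ is assumed. The corollary therefore gives the acyclicity of $\overline F$ on $X_{\mathcal A}$ together with $H^{0}(X_{\mathcal A},\overline F) \cong \bigoplus_{\alpha} H^{0}(X_{\mathcal A}, T^{\alpha}) = \bigoplus_{\alpha} T_\alpha$. Finally, exactly as in Proposition \ref{prop5}, the stalk $T_\alpha$ is identified with $\overline S_\alpha^{*}$: since $e_\alpha$ is the projection onto $\overline S_\alpha$ parallel to the remaining summands, its transpose restricted to $\overline F_\alpha$ has image canonically dual to $\overline S_\alpha$. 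This produces the stated formula $H^{0}(X_{\mathcal A},\overline F) \cong \bigoplus_{\alpha \in \mathcal A} \overline S_\alpha^{*}$.

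I expect no serious obstacle, since the proposition is deliberately set up as a replica of Proposition \ref{prop5}; the one place demanding attention is the predual verification, namely that passing to the sum-zero hyperplanes $\overline F_\alpha$ is compatible both with the pushforward maps and with the transposed projectors, so that the abstract machinery of Corollary \ref{cor1:cohomology_predual} transfers without change. The weakening from the strong to the weak intersection property affects only the reduction by constants, which is precisely what Proposition \ref{prop4:weak_IP_reduced_V} and Theorem \ref{thm3:IP_implies_extrafine} are designed to absorb.
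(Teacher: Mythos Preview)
Your proposal is correct and follows essentially the same route as the paper: the paper simply says ``As before, we obtain the following'' after Proposition \ref{prop5}, meaning one applies Corollary \ref{cor1:cohomology_predual} with $\overline V$ in place of $V$ and $\overline F$ in place of $F$, using Theorem \ref{thm3:IP_implies_extrafine} (via Proposition \ref{prop4:weak_IP_reduced_V}) for the interaction decomposition of $\overline V$. Your added verification that $\overline F$ is a genuine predual of $\overline V$ (surjectivity of the pushforwards on the sum-zero hyperplanes) and the identification $T_\alpha\cong \overline S_\alpha^{*}$ make explicit what the paper leaves implicit.
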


 In the case of finite sets $\{E_i\}_{i\in I}$ and $I$ finite, this result was established in H.G. Kellerer \cite{Kellerer1964}. See also \cite{Matus1988} and Appendix \ref{appendix5:finite_proba_functors} below.

\begin{theorem}\label{thm4}
 If the hypergraph $(\mathcal{A},I)$ satisfies the weak intersection hypothesis, {and the finiteness condition $(a^{*})$,}
for any covariant functor of sets $E$ on the category $\mathcal{A}$, the \v{C}ech cohomology $H^{*}(X_\mathcal{A};F)$ of the induced free sheaf $F$ is naturally
isomorphic to the sum of $H^\bullet(X_\mathcal{A};\overline{F})$ which is concentrated in degree zero, and of the full
\v{C}ech cohomology (with trivial coefficients $\mathbb{K}$) of the topological space $X_{\mathcal A}$ (i.e. the poset $\mathcal A$ equipped with the lower Alexandrov topology).
\end{theorem}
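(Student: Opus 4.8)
The plan is to relate $F$, its reduced version $\overline{F}$, and the constant functor $\mathbb{K}_\mathcal{A}$ by a short exact sequence of functors on $\mathcal{A}$, to show that over the finest covering $\mathcal{U}_\mathcal{A}$ this induces a short exact sequence of \v{C}ech cochain complexes, and then to read off the result from the associated long exact sequence together with the acyclicity of $\overline{F}$ established in Proposition \ref{prop6}.

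First I would construct the \emph{augmentation} $\epsilon : F \to \mathbb{K}_\mathcal{A}$, where $\mathbb{K}_\mathcal{A}$ is the constant functor $\alpha \mapsto \mathbb{K}$ (all transition maps the identity), whose associated sheaf on $X_\mathcal{A}$ is the constant sheaf. For each $\alpha$ set $\epsilon_\alpha : F_\alpha = \mathbb{K}^{(E_\alpha)} \to \mathbb{K}$, $f \mapsto \sum_{x\in E_\alpha} f(x)$, a finite sum. Since each $\pi^{\beta\alpha}$ is the pushforward of finitely supported functions along the surjection $E_\alpha \to E_\beta$, one has $\sum_y (\pi^{\beta\alpha} f)(y) = \sum_x f(x)$, so $\epsilon$ is a natural transformation. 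By definition of the reduced functor, $\Ker \epsilon_\alpha = \overline{F}_\alpha$, and $\epsilon_\alpha$ is surjective whenever $E_\alpha \neq \emptyset$ (send a Dirac mass to $1$). This yields a short exact sequence of functors
\begin{equation}
0 \to \overline{F} \to F \xrightarrow{\epsilon} \mathbb{K}_\mathcal{A} \to 0.
\end{equation}
There is in general no natural splitting on the $F$-side, since the constant function $1$ is not finitely supported when $E_\alpha$ is infinite; this is exactly why a long exact sequence, rather than a direct splitting, is needed.

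Next I would compute all three cohomologies on the canonical covering $\mathcal{U}_\mathcal{A} = \{U_\alpha\}_{\alpha\in\mathcal{A}}$, which is cofinal, so that $H^*(X_\mathcal{A};-) = H^*(\mathcal{U}_\mathcal{A};-)$ for each of $\overline{F}$, $F$, $\mathbb{K}_\mathcal{A}$. Here the weak intersection property is decisive: whenever a finite intersection $U_{\alpha_0}\cap\cdots\cap U_{\alpha_n}$ is non-empty it equals the single basic open $U_\gamma$ with $\gamma = \alpha_0 \vee \cdots \vee \alpha_n = \alpha_0 \cap \cdots \cap \alpha_n$, as recalled in Section \ref{sec:posets_alexandroff}. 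Hence the sections of the three sheaves over any such intersection are $\overline{F}_\gamma$, $F_\gamma$ and $\mathbb{K}$ (the last because $U_\gamma$ is connected, $\gamma$ lying in the closure of each of its points), and evaluating the sequence above at $\gamma$ gives the exact sequence $0 \to \overline{F}_\gamma \to F_\gamma \to \mathbb{K} \to 0$. Taking products over $K_n(\mathcal{U}_\mathcal{A})$ and using naturality of $\delta$, we obtain a short exact sequence of cochain complexes
\begin{equation}
0 \to C^\bullet(\mathcal{U}_\mathcal{A}; \overline{F}) \to C^\bullet(\mathcal{U}_\mathcal{A}; F) \to C^\bullet(\mathcal{U}_\mathcal{A}; \mathbb{K}_\mathcal{A}) \to 0.
\end{equation}

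Finally I would read off the long exact sequence in cohomology. By Proposition \ref{prop6}, $\overline{F}$ is acyclic, i.e. $H^n(X_\mathcal{A};\overline{F}) = 0$ for $n \geq 1$. The connecting maps therefore vanish in positive degrees, and the long exact sequence gives natural (in $E$) isomorphisms $H^n(X_\mathcal{A}; F) \cong H^n(\mathcal{U}_\mathcal{A}; \mathbb{K}_\mathcal{A}) = H^n(X_\mathcal{A};\mathbb{K})$ for every $n \geq 1$, together with a short exact sequence $0 \to H^0(X_\mathcal{A};\overline{F}) \to H^0(X_\mathcal{A}; F) \to H^0(X_\mathcal{A};\mathbb{K}) \to 0$ in degree zero, which splits as these are $\mathbb{K}$-vector spaces. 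Since $C^\bullet(\mathcal{U}_\mathcal{A}; \mathbb{K}_\mathcal{A})$ is precisely the \v{C}ech complex of $X_\mathcal{A}$ with constant coefficients $\mathbb{K}$, this is the asserted decomposition $H^*(X_\mathcal{A}; F) \cong H^*(X_\mathcal{A};\overline{F}) \oplus H^*(X_\mathcal{A};\mathbb{K})$ with the first summand concentrated in degree zero. The one point requiring care — and the reason the weak intersection hypothesis cannot be dropped — is the degreewise exactness of the complexes, which relies on every non-empty intersection in $\mathcal{U}_\mathcal{A}$ reducing to a single basic open $U_\gamma$ on which $\epsilon$ is surjective; without it, the sections of $\mathbb{K}_\mathcal{A}$ and the surjectivity of $\epsilon$ over intersections would no longer be controlled.
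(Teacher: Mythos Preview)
Your argument is correct, but it takes a different route from the paper. The paper proceeds by asserting a direct-sum decomposition of functors
\[
F \;\cong\; \overline{F}\,\oplus\,\mathbb{K}_\mathcal{A},
\]
which immediately splits the \v{C}ech complex and hence the cohomology. You instead use the short exact sequence $0 \to \overline{F} \to F \xrightarrow{\epsilon} \mathbb{K}_\mathcal{A} \to 0$ and the associated long exact sequence, combined with the acyclicity of $\overline{F}$ from Proposition~\ref{prop6}. That is perfectly valid; your use of the weak intersection property to guarantee that every nonempty $U_{\alpha_0}\cap\cdots\cap U_{\alpha_n}$ is a basic open $U_\gamma$ (so that $\epsilon$ is surjective on sections over each simplex) is exactly the point that makes the sequence of \v{C}ech complexes exact.

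One remark, however: your assertion that there is ``in general no natural splitting'' is not accurate in the hypergraph setting at hand. While it is true that the constant function $1$ fails to lie in $\mathbb{K}^{(E_\alpha)}$ when $E_\alpha$ is infinite, one can split $\epsilon$ \emph{naturally} via Dirac masses: choose once and for all $x_i\in E_i$ for each $i\in I$, set $x_\alpha=(x_i)_{i\in\alpha}\in E_\alpha$, and define $\sigma_\alpha:\mathbb{K}\to F_\alpha$ by $\lambda\mapsto \lambda\,\delta_{x_\alpha}$. Since $\pi^{\beta\alpha}(\delta_{x_\alpha})=\delta_{x_\beta}$, this is a natural transformation and a section of $\epsilon$. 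This is presumably what the paper has in mind with its one-line proof, and it yields the direct-sum decomposition at the level of functors, hence of cochain complexes, without invoking a long exact sequence. Your approach buys robustness (it would work for any short exact sequence with an acyclic kernel), while the paper's buys brevity and immediate naturality of the splitting in all degrees.
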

\begin{proof}
The sheaf $F$ over $\mathcal{A}$ is decomposed into the sum of the sheaf $\overline{F}$ and the constant sheaf $\mathbb{K}_A$; this
induces a decomposition in direct sum of the cochain complexes. One of them gives  gives $H^\bullet(X_\mathcal{A};\overline{F})$, which is concentrated in degree zero as
just said by the preceding proposition, whereas
the other one gives the standard \v{C}ech cohomology of $\mathcal{A}$.
\end{proof}

 When $\mathcal{A}$ is the poset of a finite simplicial complex, it satisfies the weak intersection property, and the standard \v{C}ech cohomology (with constant coefficients) on $X_{\mathcal A}$ is isomorphic to the
singular or simplicial cohomology with coefficients in $\mathbb{K}$. See the introduction to Section \ref{sec:nerves-comparison}.

\subsection{Relative cohomology and marginal theorem}  In addition to $\mathcal{A}\subseteq \mathcal{P}_f(I)$, consider another poset $\mathcal{B}$ satisfying the
same kind of hypotheses, with respect to a set $J$, i.e. $\mathcal{B}\subseteq \mathcal{P}_f(J)$.

\begin{definition}
A \emph{strict morphism}
from $(\mathcal{A},I)$ to $(\mathcal{B},J)$ is the pair $(f,f_I)$ of a functor (i.e. an increasing map) $f:\mathcal{A}\rightarrow \mathcal{B}$, and a map $f_I:I\rightarrow J$,
such that $\forall i\in I$ and all  $\alpha \in \mathcal{A}$ such that $i\in \alpha$, one has  $f_I(i)\in f(\alpha)\subseteq J$. For simplicity, we will denote $f_I=f$.
\end{definition}

 As before, let $E$ be the sheaf of sets over $\mathcal{A}$ given by products of the sets $\{E_i\}_{i\in I}$ i.e. such that $\alpha \mapsto E_\alpha \cong \prod_{i\in\alpha} E_i$; we call the $E_i$ \emph{basic sets}. Consider a strict morphism $f:\mathcal{A}\rightarrow \mathcal{B}$. For every $j\in J$, let us define $E'_j$ as the product
of the $E_i$ for $i\in I$ such that $f(i)=j$.

\begin{proposition}\label{prop7}
The direct image $f_*E$ over $\mathcal{B}$ is given by the products of the basic sets $\{E'_j\}_{j\in J}$.
\end{proposition}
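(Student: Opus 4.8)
The plan is to unwind the definitions and verify the claimed product formula for the direct image $f_*E$ by computing its value on each basic open set $U_\beta$ of $X_{\mathcal B}$, for $\beta \in \mathcal B$. Recall from the functoriality remark in Section \ref{sec:posets_alexandroff} that $(f_*E)(\beta) = (f_*E)(U_\beta) = E(f^{-1}(U_\beta))$, where $f^{-1}(U_\beta)$ is the open set $\{\alpha \in \mathcal A \mid f(\alpha) \subseteq \beta\}$ (equivalently $\beta \to f(\alpha)$, i.e. $f(\alpha) \to \beta$ in the arrow convention, meaning $\beta \subseteq f(\alpha)$ is \emph{not} what we want; since $A \to B$ iff $B \subseteq A$, the condition $f(\alpha) \subseteq \beta$ reads $\beta \to f(\alpha)$). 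Since $E$ is a sheaf of sets on $X_{\mathcal A}$, its value on this open set is the inverse limit $\varprojlim_{\alpha : f(\alpha) \subseteq \beta} E_\alpha$, by Proposition \ref{prop:functors_are_sheaves}.

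First I would make explicit what the target product $E'_\beta := \prod_{j \in \beta} E'_j = \prod_{j \in \beta}\prod_{i : f(i) = j} E_i$ is: it is the product of all $E_i$ over indices $i \in I$ with $f(i) \in \beta$. I would then identify the candidate maximal element of the indexing set $\{\alpha \in \mathcal A \mid f(\alpha)\subseteq \beta\}$ governing the limit. The natural guess is that the preimage computation collapses to $E_{\alpha_0}$ for a cofinal/terminal $\alpha$, or more carefully, that the compatible families in the limit are exactly tuples indexed by $\{i \in I : f(i) \in \beta\}$. The key step is to show that a coherent collection $(s_\alpha)_{\alpha : f(\alpha)\subseteq\beta}$, with $s_\alpha \in E_\alpha \cong \prod_{i\in\alpha}E_i$, is uniquely determined by, and freely prescribable as, a tuple $(x_i)$ ranging over $i$ with $f(i)\in\beta$. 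Concretely, for each such $i$, the singleton $\{i\}$ lies in $\mathcal A$ (as $\mathcal A \subseteq \mathcal P_f(I)$ and strictness guarantees $f(i)\in f(\{i\})$, hence $f(\{i\})\subseteq\beta$ forces admissibility once one checks $f(\{i\}) = \{f(i)\}$ or at least $\subseteq\beta$), and evaluation $s \mapsto \pi^{i\alpha}(s_\alpha) = x_i$ is the component; the surjectivity and naturality of the projections $\pi^{\beta\alpha}$ ensure the coherence conditions reduce precisely to agreement of these coordinates.

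I expect the main obstacle to be the careful bookkeeping of the index $f(i) \in \beta$ versus $f(\alpha) \subseteq \beta$, and in particular verifying that the set of $i$ contributing coordinates is exactly $\{i \in I : f(i) \in \beta\}$ rather than something larger or smaller. The strictness hypothesis — that $i \in \alpha$ implies $f(i) \in f(\alpha)$ — is what guarantees that if $f(\alpha) \subseteq \beta$ then every coordinate $i \in \alpha$ satisfies $f(i) \in \beta$, so no stray coordinates outside $\beta$ appear. Conversely I must check that every $i$ with $f(i)\in\beta$ is actually \emph{reached} by some admissible $\alpha$, i.e. that the singleton or some small set containing $i$ maps into $\beta$; this is where strictness and the definition of $E'_j$ interlock. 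One has to be attentive to whether all such singletons lie in $\mathcal A$ and whether $\beta$ itself (or elements below it) admit preimages — the cleanest route is to argue directly at the level of coordinates.

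Finally I would assemble the bijection: the map sending $(s_\alpha)_\alpha \in \varprojlim E_\alpha$ to $(x_i)_{i : f(i)\in\beta} \in \prod_{j\in\beta} E'_j$ is well-defined by the coherence of the family, injective because the $E_\alpha$ are products whose coordinates are recovered this way, and surjective because any prescribed tuple extends to a compatible family via the product structure $E_\alpha \cong \prod_{i\in\alpha}E_i$ and the surjectivity of the $\pi^{\beta\alpha}$. Naturality in $\beta$ (compatibility with the restriction maps of $f_*E$, which are the marginalizations $\pi^{\beta'\beta}$) then follows since both sides are given by honest projections of product sets, so the isomorphism is an isomorphism of sheaves of sets and not merely a pointwise bijection. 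This establishes that $f_*E$ is the free sheaf generated by the basic sets $\{E'_j\}_{j\in J}$, as claimed.
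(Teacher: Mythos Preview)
Your approach is essentially the same as the paper's: both compute $(f_*E)(\beta)$ as the inverse limit $\varprojlim_{\alpha:f(\alpha)\subseteq\beta}E_\alpha$, use the product structure $E_\alpha\cong\prod_{i\in\alpha}E_i$, and identify a compatible family with a tuple $(x_i)_{i:f(i)\in\beta}$ via the strictness condition. The paper's proof is a terse version of exactly this argument; your concern about whether the singletons $\{i\}$ lie in $\mathcal A$ is a valid technical point that the paper tacitly assumes (the maps $\pi^{i\alpha}=E(\alpha\to i)$ in the setup of Section~\ref{sec:free-sheaves} presuppose it), but the underlying strategy is identical.
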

\begin{proof}
For $\beta\in \mathcal{B}$, the set $f_*E(U_\beta)$ (in the lower $A$-topology) is the subset of
the product of the $E_\alpha$ over $\alpha\in f^{-1}(\beta)$ formed by the families $s_\alpha; f(\alpha)\subseteq \beta$,
that are compatible on the intersections $U_\alpha\cap U_{\alpha'}$. Each set $E_\alpha$ is the product of the sets $E_i;i\in \alpha$,
and the compatibility condition tells that for any pair $\alpha,\alpha'$ with $f(\alpha)\subseteq \beta$ and $f(\alpha')\subseteq \beta$,
the restriction of $s_\alpha$ and $s_{\alpha'}$ to their common terminal points coincide. This implies that $E(f^{-1}(U_\beta))$
is the product of the $E_i;i\in I$ such that $f(i)\in \beta$, then it is the product of the $E'_j$ for $j\in \beta$.
\end{proof}

 In particular, $E'_j$ coincides with the set $(f_*E)_j$ which corresponds to the direct image of sheaves.

\begin{definition}
A \emph{simplicial morphism} from $\mathcal{A}$ to $\mathcal{B}$ is a strict morphism $f:\mathcal{A}\rightarrow \mathcal{B}$,
such that $\forall \alpha\in \mathcal{A}$, the restriction of $f_I$ to the set $\alpha\in \mathcal{P}_f(I)$ is surjective onto the set
$f(\alpha)\in \mathcal{P}_f(J)$.
\end{definition}

\begin{proposition}
Let $f:\mathcal{A}\rightarrow \mathcal{B}$ be injective and simplicial, and let $F'$ be a sheaf on $\mathcal{B}$,
given by products of the basic sets $E'_j;j\in J$. The inverse image $f^{*}F'$ over $\mathcal{A}$ is given by the
products of the basic sets $\{E_i=E'_{f(i)}\}_{i\in I}$.
\end{proposition}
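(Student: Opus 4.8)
The plan is to reduce the statement to the objectwise inverse-image formula $(f^{*}F')(\alpha) = F'(f(\alpha))$ recorded in the functoriality remark above, and then to rewrite the resulting product. Since $F'$ is the sheaf determined by the basic sets $\{E'_j\}_{j\in J}$, one has $F'(f(\alpha)) \cong \prod_{j\in f(\alpha)} E'_j$, so the whole content is to produce, for each $\alpha\in\mathcal A$, a natural identification $\prod_{j\in f(\alpha)} E'_j \cong \prod_{i\in\alpha} E'_{f(i)}$; the right-hand side is by definition $\prod_{i\in\alpha} E_i$, the value at $\alpha$ of the sheaf generated by the basic sets $\{E_i = E'_{f(i)}\}_{i\in I}$.

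First I would check that the restriction $f|_\alpha \colon \alpha \to f(\alpha)$ of the vertex map is a bijection (here I use the stated convention $f = f_I$). That $f$ is simplicial means precisely that $f|_\alpha$ is surjective onto $f(\alpha)$, while injectivity of $f$ makes $f|_\alpha$ injective; together they give $f|_\alpha \colon \alpha \xrightarrow{\sim} f(\alpha)$. This bijection simply reindexes the product, yielding $\prod_{j\in f(\alpha)} E'_j \cong \prod_{i\in\alpha} E'_{f(i)} = \prod_{i\in\alpha} E_i$, as wanted at the level of sets.

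It then remains to verify that this family of bijections is compatible with the restriction maps, so that the isomorphism is one of sheaves and not merely of underlying sets. For an arrow $\alpha \to \alpha'$ in $\mathcal A$, i.e. $\alpha'\subseteq\alpha$, the restriction map of $f^{*}F'$ is $F'(f(\alpha)) \to F'(f(\alpha'))$, which is the coordinate projection $\prod_{j\in f(\alpha)} E'_j \to \prod_{j\in f(\alpha')} E'_j$ because $f(\alpha')\subseteq f(\alpha)$. Since $f|_{\alpha'}$ is the restriction of $f|_\alpha$, the bijections above intertwine this projection with the projection $\prod_{i\in\alpha} E_i \to \prod_{i\in\alpha'} E_i$, which is exactly the structure map of the sheaf generated by $\{E_i\}$; naturality in $\alpha$ follows.

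The one genuinely load-bearing point is the bijectivity of $f|_\alpha$, and with it the precise reading of the hypothesis: \emph{injective} must be understood as injectivity of the vertex map $f_I$, not merely injectivity of $f$ on the objects of $\mathcal A$. A single maximal cell $\alpha$ on which $f_I$ glues two vertices already destroys the product count — the left-hand side would then have strictly fewer factors than the right — while leaving $f$ injective on objects. Once this bijection is secured, everything else is a reindexing of a product together with the routine naturality check above.
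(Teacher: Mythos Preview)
Your proof is correct and follows essentially the same approach as the paper's: compute $(f^{*}F')(\alpha)=F'(f(\alpha))=\prod_{j\in f(\alpha)}E'_j$ and then reindex via the bijection $f|_\alpha:\alpha\to f(\alpha)$ guaranteed by the simplicial and injective hypotheses. The paper's version is simply terser, omitting your explicit naturality check and your careful remark on the intended reading of ``injective''.
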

\begin{proof}
For $\alpha\in \mathcal{A}$, by definition of $f^{-1}F'$ (which coincide with $f^{*}F'$ in the case of posets),
$(f^{*}E')_\alpha=E'_{f(\alpha)}$ is the product of the sets $E'_j;j\in f(\alpha)$, and this product coincide with the product of the
sets $E'_{f(i)}$ for $i\in \alpha$ because $f$ is simplicial and injective.
\end{proof}

 In the following result, we consider the \emph{restricted subsheaves} $\overline{F}$ and $\overline{F}'$, and we assume that both
$A$ and $B$ verify the weak intersection property.

\begin{theorem}\label{thm5}
Let $J:\mathcal{A}\rightarrow \mathcal{B}$ be an inclusion of posets, strict and simplicial. If $\overline{F}'$ is a restricted free copresheaf
over $\mathcal{B}$, then the inverse image $\overline{F}=J^{*}\overline{F}'$  over $\mathcal{A}$ is restricted, and we have a natural surjection from $\overline{F}'$ to $J_*\overline{F}$, and the induced
natural map in cohomology $J^{*}:H^{0}(\mathcal{B};\overline{F}')\rightarrow H^{0}(\mathcal{A};\overline{F})$ is surjective.
\end{theorem}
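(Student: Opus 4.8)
The plan is to treat the three assertions in order, reducing the cohomological surjectivity to the relative-cohomology Lemma \ref{lem5bis}.

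That $\overline{F}=J^{*}\overline{F}'$ is restricted is essentially formal. Because $J$ is strict, simplicial and injective, $f$ restricts to a bijection of $\alpha$ onto $f(\alpha)$, so $E_\alpha\cong\prod_{i\in\alpha}E'_{f(i)}\cong E'_{f(\alpha)}$ and, by the preceding proposition on inverse images, $J^{*}\overline{F}'$ is the free sheaf on the basic sets $E_i=E'_{f(i)}$. The augmentation condition cutting out the restricted subsheaf is preserved by the identification $\overline{F}_\alpha\cong\overline{F}'_{f(\alpha)}$, so $\overline{F}$ is indeed restricted. For the second assertion I would take the map $\overline{F}'\to J_{*}J^{*}\overline{F}'=J_{*}\overline{F}$ to be the unit of the adjunction $J^{*}\dashv J_{*}$; concretely it sends $t\in\overline{F}'_\beta$ to the coherent family $(\pi^{f(\alpha)\beta}(t))_{\alpha:\,f(\alpha)\subseteq\beta}$ of its marginals, which is an element of $(J_{*}\overline{F})(\beta)=\overline{F}(J^{-1}(U_\beta))$. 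Surjectivity here reduces to a local extension problem over the star $U_\beta$: restricting the interaction decomposition of $\overline{F}'$ to $U_\beta$, a coherent family over $J^{-1}(U_\beta)$ only prescribes the interaction components attached to the faces $f(\gamma)\subseteq\beta$, and extending the remaining components by zero yields a preimage $t$.

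The heart of the statement is the surjectivity of $J^{*}$ on $H^{0}$. I would first identify this map with the restriction map in the long exact sequence of the pair $(\mathcal{B},\mathcal{A})$, using that $H^{0}(\mathcal{A};\overline{F})=H^{0}(\mathcal{B};J_{*}\overline{F})$ since the direct image preserves global sections. That exact sequence
\[
H^{0}(\mathcal{B};\overline{F}')\xrightarrow{\;J^{*}\;}H^{0}(\mathcal{A};\overline{F})\longrightarrow H^{1}(\mathcal{B},\mathcal{A};\overline{F}',\overline{F})
\]
shows that it is enough to prove $H^{1}(\mathcal{B},\mathcal{A};\overline{F}',\overline{F})=0$. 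To obtain this vanishing I would use the weak intersection property of $\mathcal{B}$, which by Theorem \ref{thm3:IP_implies_extrafine} and the duality construction yields a decomposition $\overline{F}'=\bigoplus_{\delta\in\mathcal{B}}\overline{T}'^{\delta}$ into subsheaves, each $\overline{T}'^{\delta}$ supported on the star $U^{\delta}$ and having isomorphisms as transition maps inside $U^{\delta}$. Identifying $\mathcal{A}$ with the sub-poset $f(\mathcal{A})\subseteq\mathcal{B}$, the sheaf $\overline{F}=J^{*}\overline{F}'$ is the restriction of $\overline{F}'$ to $f(\mathcal{A})$, and the decomposition restricts termwise. Since on each simplex of the canonical covering the relevant stalk is a finite direct sum, the relative cochain complex of $\overline{F}'$ is the product over $\delta$ of those of the $\overline{T}'^{\delta}$, and cohomology commutes with products; thus it suffices that $H^{n}(\mathcal{B},\mathcal{A};\overline{T}'^{\delta},J^{*}\overline{T}'^{\delta})=0$ for every $\delta$ and every $n\geq 1$, which is exactly Lemma \ref{lem5bis} applied with $\gamma=\delta$.

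The main obstacle is precisely this vanishing of the relative first cohomology, where the whole weight of the argument lies; the remainder is bookkeeping. Two points require care. First, Lemma \ref{lem5bis} asks that $\mathcal{A}$ and $\mathcal{B}$ be closed under finite coproducts, whereas the weak intersection property only guarantees them conditionally; for the restricted sheaves this is harmless, since a missing (empty-intersection) coproduct corresponds to a stalk equal to the constants, which has been quotiented to zero. Second, one must check that the termwise relative cohomology really assembles correctly, which is clean here because the finiteness of the decomposition on each simplex turns the sum into a product, and products are exact, so cohomology passes through them; notice in particular that this route needs only the weak intersection property and not any further finiteness such as condition $(a^{*})$, matching the hypotheses of the theorem.
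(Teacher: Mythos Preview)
Your proof is correct and follows essentially the same route as the paper: reduce via the long exact sequence of the pair $(\mathcal{B},\mathcal{A})$ to the vanishing of $H^{1}(\mathcal{B},\mathcal{A};\overline{F}',\overline{F})$, decompose $\overline{F}'$ and its restriction into the star-supported summands $T^{\delta}$, and apply Lemma~\ref{lem5bis} termwise. The only cosmetic difference is in handling the closure-under-coproducts hypothesis of Lemma~\ref{lem5bis}: the paper passes to the completions $\overline{\mathcal{A}},\overline{\mathcal{B}}$ (adjoining $\emptyset$) and then invokes the natural isomorphism of restricted-sheaf cohomology between a poset and its completion, whereas you observe directly that the missing empty-intersection stalk vanishes for restricted sheaves---two phrasings of the same fact.
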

\begin{proof}
Along $\mathcal{A}$, the stalk of $\overline{F}'$ and $J_*\overline{F}$ coincide. From Theorem \ref{thm2:extrafine_and_conditionG} and the long exact sequence in \v{C}ech cohomology (Appendix \ref{appendix3:cech}), we get the following exact sequence:
\begin{equation}
0\rightarrow  H^{0}(\mathcal{B},\mathcal{A};\overline{F}',\overline{F})\rightarrow H^{0}(\mathcal{B};\overline{F}')\rightarrow H^{0}(\mathcal{A};\overline{F})\rightarrow H^{1}(\mathcal{B},\mathcal{A};\overline{F}',\overline{F})\rightarrow 0.
\end{equation}
Then the theorem is equivalent to the vanishing of $H^{1}(\mathcal{B},\mathcal{A};\overline{F}',\overline{F})=0$. To prove the latter, we proceed
as in the proof of Corollary \ref{cor1:cohomology_predual}: we decompose $\overline{F}'$ over $\overline{B}$ and then $\overline{F}$ accordingly over $\overline{A}$ in direct sums of sheaves $T^{\beta};\beta\in \overline{\mathcal{B}}$ and $S^{\alpha};\alpha\in \overline{\mathcal{A}}$ respectively, which satisfy the hypotheses of Lemma \ref{lem5bis}. Then we conclude
by applying Lemma \ref{lem5bis} and the natural isomorphism in cohomology between $A$ (resp. $B$) and $\overline{A}$ (resp. $\overline{B}$) for the restricted sheaves.
\end{proof}
In the context of finite probabilities, {quotienting $F$ to obtain $\overline F$} corresponds to the tangent equation of the probability restriction of sum $1$, and {the surjection of} Theorem \ref{thm5} is equivalent to a result of H.G. Kellerer \cite{Kellerer1964}.

In the  {Appendix \ref{appendix5:finite_proba_functors}}, based on the preceding sections, we prove the following index formula, which generalizes the result of Kellerer \cite{Kellerer1964} and Mat{\'u}{\v{s}} \cite{Matus1988}.

\begin{theorem}\label{thm6}
If the poset $\mathcal{A}$ is finite and satisfies the weak intersection property, and if the $\{E_i\}_{i\in I}$ are finite sets, then
\begin{equation}
\chi(\mathcal{A};F)=\sum_{k=0}^\infty (-1)^{k}\dim_{\mathbb{K}}H^{k}(\mathcal{U}_{\mathcal{A}};{F})=\sum_{\alpha, \beta\in \mathcal{A}}\mu_{\alpha\beta}N_\beta;
\end{equation}
where $\mu_{\alpha, \beta}$ is the M\"obius function of $\mathcal{A}$, and, for each $\alpha\in \mathcal{A}$, $N_\alpha$ denotes the cardinality of $E_\alpha$.
\end{theorem}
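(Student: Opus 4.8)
The plan is to compute the Euler characteristic directly at the cochain level, exploiting that $\mathcal{A}$ is finite. Since $\mathcal{A}$ is finite, the nerve of the covering $\mathcal{U}_{\mathcal{A}}$ is a finite abstract simplicial complex: its simplices are the subsets $\sigma=\{\alpha_0,\ldots,\alpha_n\}$ of $\mathcal{A}$ with $U_{\alpha_0}\cap\cdots\cap U_{\alpha_n}\neq\emptyset$, and $H^\bullet(\mathcal{U}_{\mathcal{A}};F)$ is computed by the associated finite-dimensional simplicial cochain complex with local coefficients $\sigma\mapsto F(U_\sigma)$ (the normalized subcomplex of the complex of sequences used in the definition, which carries the same cohomology). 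Because this complex is bounded with finite-dimensional terms, its Euler characteristic equals the alternating sum of cohomology dimensions, so
\[
\chi(\mathcal{A};F)=\sum_{\sigma}(-1)^{\dim\sigma}\dim_{\mathbb{K}}F(U_\sigma),
\]
the sum running over the non-degenerate simplices $\sigma$ of the nerve.

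First I would identify each term. By the weak intersection property, for a simplex $\sigma=\{\alpha_0,\ldots,\alpha_n\}$ the open set $U_{\alpha_0}\cap\cdots\cap U_{\alpha_n}$ equals $U_{\beta(\sigma)}$, where $\beta(\sigma)=\alpha_0\cap\cdots\cap\alpha_n\in\mathcal{A}$ (it is nonempty precisely because $\sigma$ is a simplex); and since $F$ is the free sheaf, $F(U_{\beta(\sigma)})=F_{\beta(\sigma)}=\mathbb{K}^{(E_{\beta(\sigma)})}$ has dimension $N_{\beta(\sigma)}$. Grouping the simplices by the value $\beta=\beta(\sigma)$ then gives
\[
\chi(\mathcal{A};F)=\sum_{\beta\in\mathcal{A}}N_\beta\, w(\beta),\qquad w(\beta):=\sum_{\sigma:\,\beta(\sigma)=\beta}(-1)^{\dim\sigma}.
\]

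The combinatorial heart is the evaluation of the weights $w(\beta)$. For fixed $\beta$, I would first sum over all simplices whose meet contains $\beta$, i.e. all $\sigma$ with every vertex in the principal filter ${\uparrow}\beta=\{\alpha\in\mathcal{A}:\beta\subseteq\alpha\}$. Any nonempty subset $S\subseteq{\uparrow}\beta$ is automatically a simplex, because $\beta\in\mathcal{A}$ witnesses $\bigcap_{\alpha\in S}U_\alpha\supseteq U_\beta\neq\emptyset$; hence the vertices of ${\uparrow}\beta$ span a full simplex, and the standard identity $\sum_{\emptyset\neq S\subseteq{\uparrow}\beta}(-1)^{|S|-1}=1$ yields $\sum_{\gamma\supseteq\beta}w(\gamma)=1$ for every $\beta\in\mathcal{A}$. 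Möbius inversion over the finite poset $\mathcal{A}$ then gives $w(\beta)=\sum_{\gamma\supseteq\beta}\mu(\beta,\gamma)$, and substituting back,
\[
\chi(\mathcal{A};F)=\sum_{\beta\in\mathcal{A}}N_\beta\sum_{\gamma\,\supseteq\,\beta}\mu(\beta,\gamma)=\sum_{\alpha,\beta\in\mathcal{A}}\mu_{\alpha\beta}N_\beta,
\]
with $\mu_{\alpha\beta}=\mu(\beta,\alpha)$ the Möbius function of $\mathcal{A}$ (supported on the arrows $\alpha\to\beta$, i.e. $\beta\subseteq\alpha$).

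The main obstacle I anticipate is bookkeeping rather than conceptual: one must (i) justify passing from the (unbounded) complex of sequences used to define $H^\bullet(\mathcal{U}_{\mathcal{A}};F)$ to the finite normalized simplicial complex, so that the cochain-level Euler characteristic is legitimate, and (ii) treat the meet $\beta(\sigma)$ carefully, in particular the borderline case $\beta=\emptyset$, which occurs only when $\emptyset\in\mathcal{A}$, where $\emptyset$ is the unique final object and the same full-simplex computation applies. Alternatively, one could route the computation through the decomposition $F\cong\overline{F}\oplus\mathbb{K}_{\mathcal{A}}$ underlying Theorem \ref{thm4} together with Proposition \ref{prop6}, reducing $\chi$ to $\sum_\alpha\dim\overline{S}_\alpha$ plus the Euler characteristic of $X_{\mathcal{A}}$ with constant coefficients; but the direct count above seems both shorter and more transparent.
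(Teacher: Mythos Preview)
Your argument is correct and follows a genuinely different route from the paper. The paper deduces Theorem \ref{thm6} from the structural results it has already built: it splits $F=\overline{F}\oplus\mathbb{K}_{\mathcal{A}}$, invokes Proposition \ref{prop6} (acyclicity of $\overline{F}$, coming from the interaction decomposition) together with Theorem \ref{thm4} to obtain $\chi(\mathcal{A};F)=\dim H^{0}(\mathcal{U}_{\mathcal{A}};\overline{F})+\chi(\mathcal{A})$, and then uses M\"obius inversion on the dimensions $\overline D_\alpha$ of the interaction pieces to get $\dim H^{0}(\overline{F})=\sum_{\alpha,\beta}\mu_{\alpha\beta}(N_\beta-1)$; adding $\chi(\mathcal{A})=\sum_{\alpha,\beta}\mu_{\alpha\beta}$ yields the formula. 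This is exactly the alternative you sketch in your last paragraph. Your direct cochain-level count bypasses all of that machinery: you never use extra-fineness, acyclicity, or the interaction decomposition, only the weak intersection property (to identify $U_\sigma=U_{\beta(\sigma)}$ with $\beta(\sigma)\in\mathcal{A}$) and elementary M\"obius inversion on the weights $w(\beta)$. The trade-off is that the paper's route simultaneously establishes the much stronger statement that $H^{k}(\mathcal{U}_{\mathcal{A}};\overline{F})=0$ for $k\geq 1$ and identifies each cohomology group, whereas your argument computes only the alternating sum. Your point (i) about passing to the normalized (alternating) \v{C}ech complex is the one place that needs a standard citation, but it is routine.
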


We will also prove that
\begin{equation}
\chi(\mathcal{A};{F})=\dim_{\mathbb{K}}H^{0}(\mathcal{U}_{\mathcal{A}};\overline{{ F}})+\chi(\mathcal{A});
\end{equation}
where $\chi(\mathcal{A})$ denotes the Euler characteristic of $\mathcal{A}$, in every possible sense: as a metric subspace of the simplex
$\mathcal{P}(I)$, as the lower or upper Hausdorff topological space in \v{C}ech cohomology, or as an abstract poset; this is also the Euler characteristic
of the nerve of the category $\mathcal{A}$.

\section{Nerves of categories and nerves of coverings}\label{sec:nerves-comparison}

Any \emph{contravariant} functor $G$ of abelian groups on a poset $\mathcal A$ produces a sheaf, also denoted by $G$, on the topological space $X^{\mathcal A}$, whose underlying set is $\Ob \mathcal A$, equipped with the upper A-topology. This is equivalent with the dual statement for covariant functors on $\mathcal{A}^{op}$. But $G$ is also an abelian object in the topos $\operatorname{PSh}(\mathcal A)$, cf. \cite{Artin1972-2}, \cite{Moerdijk2006}. And in the context of topos theory, it is customary to study another cohomology, that is the graded derived functor $H^\bullet(\mathcal A, -)$ of
\begin{equation}
\Gamma_{\mathcal A}(-) =  \Hom_{\operatorname{Ab}(\mathcal A)}(\mathbb Z, -)\cong \Hom_{\operatorname{PSh}(\mathcal A)} (\ast, -);
\end{equation}
cf. \cite{Weibel1994}. In the following lines, we give a more explicit and topological definition of this functor, according to \cite{Artin1972-2}, \cite{Moerdijk2006}.\\

The nerve of a small category $\mathcal C$ is the simplicial set whose $n$ simplices are sequences $c_0 \to \cdots \to c_n$ of composable arrows in $\mathcal C$, and whose face operators are
\begin{equation}
d_i(c_0 \overset{f_1}\to  \cdots \overset{f_n}\to c_n) = \begin{cases}
c_1 \to \cdots c_n & \text{if } i=0 \\
c_0 \to \cdots c_{i-1} \overset{f_{i+1}\circ f_i}\to c_{i+1} \to \cdots \to c_n & \text{if } 0 < i < n\\
c_0 \to \cdots \to c_{n-1} & \text{if } i=n
\end{cases}.
\end{equation}
For background and details, see Section \ref{appendix2:nerve} below. This permits to define a \emph{canonical cochain complex} $(C^n(A, G),d)$ whose cohomology is precisely $H^\bullet(\mathcal A,G)$, cf. \cite[Prop.~6.1]{Moerdijk2006}. This complex
comes from a  canonical projective resolution of the constant presheaf $\mathbb Z$ \cite[Ex.~V.2.3.6]{Artin1972-2}.\\

The $n$-cochains are
\begin{equation}\label{eq:complex-sheaf-cohomology}
C^n(\mathcal A,G) = \prod_{a_n \to \cdots \to a_0 \text{ in }\mathcal A} G(a_n) = \prod_{a_0 \to \cdots \to a_n \text{ in }\mathcal A^{op}} G(a_n)
\end{equation}
and the coboundary $\delta: C^{n-1}(\mathcal A,G)\to C^n(\mathcal A,G)$ is given by
\begin{equation}\label{eq:coboundary-sheaf-cohomology}
(\delta g)_{a_0\to \cdots \to a_n} = \sum_{i=0}^{n-1} (-1)^i g_{d_i(a_0 \to \cdots a_n)} + (-1)^n G(\varphi_n)g_{d_n(a_0\to \cdots a_n)},
\end{equation}
where $\varphi_n$ is the $\mathcal A$-morphism from $a_{n}$ to $a_{n-1}$ in the sequence $a_n\to \cdots \to a_0$.

\begin{remark}
This complex and its analog for a covariant homology were rediscovered by O. Peltre in the context of his doctoral work \cite{OP-phd}, which gives a homological interpretation of the generalized Belief Propagation algorithm \cite{Yedidia2005}, which is applied in statistical physics,  bayesian learning and decoding processes. One of the initial motivations behind the present article was to understand better the connections of it with \v{C}ech cohomology and sheaf cohomology.
\end{remark}

\begin{example}\label{acyclicityfactorinteractiondecomposition}
Consider a sheaf $S^{\gamma}$ for $\gamma\in \mathcal{A}$ of abelian groups over $\mathcal{A}$, which is zero outside $U^{\gamma}$ and such
that for any arrow $\alpha\rightarrow\beta$ in $U^{\gamma}$ the morphism $j_{\alpha\beta}$ is an isomorphism of group. This was the situation
for the factors of an interaction decomposition. Then the above cohomology is null in degree $\geq 1$, and equal to zero or to {$S^{\gamma}(\gamma)$,}
according to the fact that $\gamma$ has or not a successor in $\mathcal{A}$. The proof is the exact copy of the Lemma \ref{lem5}.

The analogue for the sheaf {$T_{\gamma}$} over the opposite category $\mathcal{A}^{op}$ tells that the cohomology is zero in degree $\geq 1$, but it is
always isomorphic to {$T_\gamma(\gamma)$} in degree zero.
\end{example}

In this section, our aim is to compare this cohomology with the topological \v{C}ech cohomology of the sheaf $G$ on $X^{\mathcal A}$ that we have studied in the previous sections.
 We will prove (Corollary \ref{cor1:isomorphsm-poset})  that, when $\mathcal{A}$ possesses conditional products (i.e. the covering $\mathcal{U}_\mathcal{A}$ is closed
by finite non-empty intersections), these two cohomologies are naturally isomorphic.

The equivalence of category between sheaves over $X^{\mathcal{A}}$ and $\operatorname{PSh}(\mathcal A)$, reminded in \eqref{categoryequivalence}
does not imply in general the isomorphism of the two cohomology theories.

For any topological space, the isomorphism between \v{C}ech and topos cohomology in degree $0$ and $1$ is a general result, due to \cite[Sec.~3.8]{Grothendieck1957}. { The fact that in every degree, the topos cohomology of sheaves over a topological space coincides with the topos cohomology of any
equivalent topos is proved in \cite[Ch.~V]{Artin1972-2} see also \cite{stacks-project}, cohomology of sheaves. Grothendieck also established an injection of \v{C}ech cohomology group of degree $2$ into the sheaf cohomology group of degree $2$. This injectivity is sufficient to establish the vanishing of $H^2$ for the sheaves $S^\gamma$ or $T_{\gamma}$ without appealing to the existence of conditional products or coproducts in $\mathcal A$. }

 The methods {of comparison based on spectral sequences \cite{Segal1968},} developed in {Tohoku \cite[Ch.~III]{Grothendieck1957} and the SGA4
\cite{Artin1972-2}}, {might} apply in our situation, but our aim in this last section is to describe an explicit natural isomorphism between the two cohomolgy theories, when it exists. We will achieve this goal by putting the two theories in a same more general context
of cosimplicial sheaves over simplicial sets. The explicit comparison of the cohomology theories is is the content of the theorem \ref{them7} below.

When $G$ is the constant sheaf $\mathbb Z$,  $H^\bullet(C(N(\mathcal A), \mathbb Z),d)$ corresponds to the simplicial cohomology of $|N(\mathcal A)|$, the geometric realization of the nerve $N(\mathcal A)$ (see Remark \ref{rmk:geometric-realization}); it is known to be naturally isomorphic to the singular cohomology of $|N(\mathcal A)|$ (cf. \cite{Eilenberg1952}). In turn, {for paracompact spaces, as are the finite {dimensional} $CW$ complexes,} the \v{C}ech cohomology $\check{H}^\bullet(X^{\mathcal A}, \mathbb Z)$ is  isomorphic to the singular cohomology of $X^{\mathcal A}$. Hence the isomorphism between $H^\bullet(C(N(\mathcal A), \mathbb Z),d) \cong \check{H}^\bullet(X^{\mathcal A}, \mathbb Z) $ is implied by the homotopy equivalence between $|N(\mathcal A)|$ and $X^{\mathcal A}$, see May \cite{May1992}. We are looking here for an extension of this result in the context of sheaves, and without paracompacity.

For that purpose, we introduce a general framework of \emph{cosimplicial local systems} on simplicial sets. We will remind below the definition of simplicial sets and simplicial objects in a category. The nerve $K_\bullet(\mathcal U)$ of a covering $\mathcal U$ introduced in Section \ref{sec:extra-fine} and the nerve $N(\mathcal{C})$ of a category $\mathcal{C}$ are examples of simplicial sets.  Cosimplicial local systems are functorial assignments of local data to the simplexes and morphisms of a simplicial set. It appears that both \v{C}ech cohomology and the cohomology introduced by \eqref{eq:complex-sheaf-cohomology}-\eqref{eq:coboundary-sheaf-cohomology} become particular cases of this general construction and can be compared in this framework.

\subsection{Simplicial sets and nerves of coverings}\label{appendix2:nerve}

Simplicial sets can be traced back to Eilenberg and Zilber \cite{Eilenberg1950}---under the name ``complete semi-simplicial sets''. They became ubiquitous in algebraic topology, due to the
works of Segal, Grothendieck, Kan, Quillen, May and many others. The subject was treated in great detail by May in  \cite{May1992}; also \cite[Ch.~8]{Weibel1994} is a good introduction.\\

Let $\Delta$ be the category whose objects are the finite ordered sets $[n]=\{0 < 1 <...<n\}$, for each $n\in \mathbb N$, and whose morphisms are nondecreasing monotone functions. Given any category $\mathcal C$, a simplicial object $S$ in $\mathcal C$ is a contravariant functor from $\Delta$ to $\mathcal C$ i.e. $S:\Delta^{op}\to \mathcal C$. When $\mathcal C$ is the category of sets, $S$ is called a \emph{simplicial set}. One can define analogously simplicial groups, modules, etc.

Although $\Delta$ has many morphisms, which seem complicated at first sight, they can be conveniently expressed in terms of certain morphisms known as \emph{face and degeneracy maps}. For each $n\in \mathbb N$ and $i\in [n]$, the face map $d^{n}_i:[n]\to [n+1]$ is given by
\begin{equation}
d^{n}_i(j) = j \quad \text{if}\quad  j< i, \qquad d^{n}_i(j) = j+1 \quad \text{if} \quad j\geq i.
\end{equation}
Similarly, for each $i\in [n+1]$, the degeneracy map $s^{n+1}_i:[n+1]\to [n]$ is
\begin{equation}
s_i^{n+1}(j) = j \quad \text{if} \quad j\leq  i, \qquad s_i^{n+1}(j) = j-1 \quad \text{if}\quad j> i.
\end{equation}
Normally the super-index is dropped.

Given a morphism $\varphi:[m]\to [n]$  of $\Delta$, let $i_1,...,i_s$ be the elements of $[n]$ not in $\varphi([m])$, in reverse order, and let $j_1,...,j_t$, in order, be the elements of $[m]$ such that $\varphi(j) = \varphi(j+1)$. Then
\begin{equation}
\varphi = d_{i_1} \cdots d_{i_s}s_{j_1}\cdots s_{j_t}.
\end{equation}
Remark that $m-t+s = n$. This factorization is unique \cite[Sec.~I.2]{May1992}.

The face and degeneracy maps satisfy some relations:
\begin{align}\label{rcds}
\forall n\in \mathbb{N}, 0\leq j< k\leq n,\quad & s^{n+1}_j\circ d^{n}_k=d^{n}_{k-1}\circ s^{n+1}_j,\\
\forall n\in \mathbb{N}, 0\leq j\leq n+1,\quad & s^{n+1}_j\circ d^{n}_j=s^{n+1}_{j+1}\circ d^{n}_j=\id_{n+1},\\
\forall n\in \mathbb{N}, n+1\geq j> k+1\geq 1,\quad & s^{n+1}_j\circ d^{n}_k=d^{n}_k\circ s^{n+1}_{j-1}.
\end{align}

A (simplicial) morphism from a simplicial set $S$ to a simplicial set $S'$ is a natural transformation of functors: a collection of
maps $\{f_n:S([n])\rightarrow S'([n])\}_{n\in \mathbb{N}}$ such that, for each  morphism $\varphi:[m]\rightarrow [n]$ in $\Delta$, the diagram
\begin{equation}
\begin{tikzcd}
S([n]) \ar[d, "f_n"] \ar[r, "S\varphi"] & S([m]) \ar[d, "f_m"] \\
S'([n])  \ar[r, "S\varphi"] & S'([m])
\end{tikzcd}
\end{equation}
 commutes.

\begin{example}[Simplex] A basic example of simplicial set is the \emph{$k$-simplex} $\Delta^k$ \cite[Def.~I.5.4]{May1992}, which is the presheaf represented by $[k]$. This means that $\Delta^k_n = \Delta^k([n])$ equals $\Hom([n],[k])$, and the map $\Delta^k\varphi : \Hom([m],[k]) \to \Hom([n],[k])$ induced by $\varphi:[m]\to[n]$ is given by precomposition with $\varphi$.
\end{example}

\begin{example}[Nerve of a covering] \label{example:nerve-covering}
Let $X$ be a topological space and $\mathcal{U}$ an open covering of $X$. The \emph{nerve of the covering} $\mathcal{U}$ is the set $K(\mathcal{U})$ of finite sequences of elements of $\mathcal{U}$ having a non-empty intersection. It has a natural structure of simplicial set: $K_n=K([n])$ is the set of sequences of length $n+1$, denoted $(U_0,...,U_n)$, and for any nondecreasing  function $\varphi_{m,n}$ from
$m$ to $n$, there is a map $\varphi^*_{m,n}:K_n\to K_m$ given by
\begin{equation}
\varphi^{*}_{m,n}(V_0,...,V_n)=(V_{\varphi(0)},...,V_{\varphi(m)}).
\end{equation}
In other terms, $K_n(\mathcal{U})$ is the set of maps $u:[n]\rightarrow \mathcal{U}$ such that the intersection of the images are non-empty,
and if $\varphi:[m]\rightarrow [n]$ is a morphism and $v\in K_n(\mathcal{U})$, then $\varphi^{*}_{m,n}(v)=v\circ\varphi$.

Hence the map $s^*_i=K(s_i^{n+1})$ is given by
\begin{equation}\label{commutdegeneracies}
s^*_i(U_0,...,U_n)=(U_0,...,U_{i-1}, U_i,U_i,U_{i+1},...,U_{n});
\end{equation}
is also called \emph{degeneracy map}, whereas $d^*_i = K(d^{n+1}_i)$ is given by
\begin{equation}\label{commutfaces}
d^*_i(U_0,...,U_{n+1})=(U_0,...,U_{i-1},\widehat{U_i},U_{i+1},...,U_{n+1}),
\end{equation}
and called \emph{face map.}

 For each $u=(U_0,...,U_n)$, we denote by $U_u$ the intersection $U_0\cap ...\cap U_n$. It is easily verified that, for every morphism $\varphi_{m,n}:[m]\to [n]$ and every $v\in K_n(\mathcal{U})$, one has $U_v \subseteq U_{\varphi^{*}(v)}$. In particular, $U_{\varphi^{*}(v)}$ is non-empty if $U_v$ is non-empty.
\end{example}

\begin{example}[Nerve of a category] \label{example:nerve-category}
 To any small category $\mathcal{C}$ is naturally associated a simplicial set $N(\mathcal{C})$, named its nerve: the elements of $N_n(\mathcal{C})$ are the covariant functors from the poset $[n]$ to $\mathcal{C}$, and the morphisms are obtained by right composition.

Concretely an element of degree $n$ is a sequence
\begin{equation}
a=\alpha_0\rightarrow \alpha_1\rightarrow ...\rightarrow \alpha_n.
\end{equation}
The action $s_i^{*}$ of $s_i$ is the repetition of the object  $\alpha_i$ via the insertion of an identity $\id_{\alpha_i}$; the action $d_i^{*}$ of $d_i$ is the deletion of $\alpha_i$ via the composition of $\alpha_{i-1}\to \alpha_i$ and $\alpha_i\to \alpha_{i+1}$.

More generally if
\begin{equation}
b=\beta_0\rightarrow \beta_1\rightarrow ...\rightarrow \beta_m
\end{equation}
belongs to $N_m(\mathcal{C})$, and $\varphi:n\rightarrow m$ is non-decreasing, then
\begin{equation}
\varphi^{*}(b)=\beta_{\varphi(0)}\rightarrow \beta_{\varphi(1)}\rightarrow ...\rightarrow \beta_{\varphi(n)}.
\end{equation}
\end{example}


\begin{example}[Barycentric subdivision of the nerve of a covering] \label{example:barycentric-nerve-covering}
Consider the category {$\mathcal{A}(\mathcal{U})$} which has for objects the non-empty intersections of the elements
of $\mathcal{U}$, and for morphisms the inclusions, then the nerve {$N(\mathcal{A}(\mathcal{U}))$} is the barycentric subdivision of the simplicial
set $K(\mathcal{U})$. This was remarked by Segal \cite{Segal1968}, interpreting \cite{Eilenberg1950}.
\end{example}

\begin{remark}[Geometric realization]\label{rmk:geometric-realization}
It is reassuring to know that any simplicial set gives rise to a CW-complex, even if this is not directly used in the present text.  The \emph{geometric realization} $|K|$ of the simplicial set $K$ is a topological space obtained as the quotient of the disjoint union
of the products $K_n\times \Delta(n)$, where $K_n= K([n])$ and $\Delta(n)\subset {\mathbb R}^{n+1}$ is   the geometric standard simplex, by the equivalence relation that identifies $(x,\varphi_*(y))$
and $(\varphi^{*}(x),y)$ for every nondecreasing map $\varphi:[m]\rightarrow [n]$, every $x\in K_n$ and every $y\in \Delta(m)$; here $f^*$ is $K(f)$ and $f_*$ is the unique linear map from $\Delta(n)$ to $\Delta(m)$ that maps the canonical vector $e_i$ to $e_{f(i)}$. For every $n\in \mathbb{N}$, $K_n$ is equipped with the discrete topology and $\Delta(n)$ with
its usual compact topology, the topology on the union over $n\in \mathbb{N}$ is the weak topology, i.e. a subset is closed
if and only if its intersection with each closed simplex is closed, and the realization is equipped with the quotient topology. In particular, even it is not evident at first sight, the realization of the simplicial set $\Delta^{k}$ is the standard simplex $\Delta(k)$. See \cite[Ch.~III]{May1992}.
\end{remark}

 The cartesian product of two simplicial sets $K$ and $L$ is taken as it must be for functors to $\mathcal{E}$, that is
term by term: $(K\times L)([n])=K([n])\times L([n])$ at the level of objects, and similarly for the maps.

\begin{definition}
Let $f:K\rightarrow L$ and $g:K\rightarrow L$ be two simplicial maps, a \emph{simplicial homotopy}
 from $f$ to $g$ is a simplicial map $h:K\times \Delta^{1}\rightarrow L$, such that $f=h\circ (\id_K\times d_0)$ and $g=h\circ (\id_K\times d_1)$.
\end{definition}

Simplicial homotopy is an equivalence relation, compatible with composition of simplicial maps.

\begin{example}[Homotopy induced by a projection of coverings]\label{example:projections-covering}
 A covering $\mathcal{U}$ is called a  \emph{refinement} of another covering $\mathcal{U}'$ when every set of $\mathcal U$ is contained in some set of $\mathcal U'$. In that case, there exists a  map  $\lambda: \mathcal{U}\rightarrow \mathcal{U}'$, called \emph{projection}, such that for every $ U\in \mathcal{U}$ one has $U\subseteq \lambda(U)$. It is also said that
$\mathcal{U}$ is  \emph{finer} than $\mathcal{U}'$ \cite{Eilenberg1952}.

A projection map $\lambda: \mathcal{U} \rightarrow\mathcal{U}'$ induces  a simplicial morphism $\lambda_{*}$ from the
simplicial set $K(\mathcal{U})$ to the simplcial set $K(\mathcal{U}')$:
\begin{equation}
\lambda_*(u)=\lambda\circ u;
\end{equation}

\begin{proposition}\label{prop:homotopy_from_projection} If $\mathcal{U}$ is a refinement of $\mathcal{U}'$, two projections $\lambda, \mu$
from $\mathcal{U}$ to $\mathcal{U}'$ induce homotopic simplicial maps $\lambda_*, \mu_*$ from $K(\mathcal{U})$ to $K(\mathcal{U}')$.
\end{proposition}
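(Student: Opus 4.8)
The plan is to exhibit an explicit simplicial homotopy built from the standard prism over $\Delta^1$, but phrased in terms of the underlying order maps so that the verification of simpliciality becomes transparent rather than combinatorially painful. First I would recall, following the exponential description in Example \ref{example:nerve-covering}, that an $n$-simplex of $K(\mathcal U)\times\Delta^1$ is a pair $(u,\tau)$, where $u\colon[n]\to\mathcal U$ is a map whose images have non-empty intersection and $\tau\colon[n]\to[1]$ is nondecreasing, and that, since products of simplicial sets are taken levelwise, the structure maps act on both coordinates by precomposition: $\varphi^{*}(u,\tau)=(u\circ\varphi,\tau\circ\varphi)$ for every $\varphi\colon[m]\to[n]$ in $\Delta$.

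Then I would define $h\colon K(\mathcal U)\times\Delta^1\to K(\mathcal U')$ value by value, setting for each $i\in[n]$
\[
h(u,\tau)(i)=\begin{cases}\lambda(u(i)) & \text{if }\tau(i)=0,\\ \mu(u(i)) & \text{if }\tau(i)=1.\end{cases}
\]
Because $\tau$ is nondecreasing, this is exactly the sequence obtained from $u=(U_0,\dots,U_n)$ by applying $\lambda$ to an initial block of indices and $\mu$ to the complementary terminal block. The first thing to check is that $h(u,\tau)$ really is a simplex of $K(\mathcal U')$, i.e. that its entries have non-empty intersection; this is the only place where the refinement hypothesis is used. Indeed, each projection satisfies $U\subseteq\lambda(U)$ and $U\subseteq\mu(U)$, so $h(u,\tau)(i)\supseteq u(i)$ for every $i$, whence $\bigcap_i h(u,\tau)(i)\supseteq\bigcap_i u(i)\neq\emptyset$.

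Next I would verify simpliciality, namely $h(\varphi^{*}(u,\tau))=\varphi^{*}(h(u,\tau))$ for all $\varphi\colon[m]\to[n]$. Since $h$ is defined pointwise and both sides act by precomposition with $\varphi$, evaluating either side at $i\in[m]$ yields $\lambda(u(\varphi(i)))$ or $\mu(u(\varphi(i)))$ according to whether $\tau(\varphi(i))$ equals $0$ or $1$; the two maps therefore coincide. This is precisely the step that would be laborious if one instead decomposed the prism into nondegenerate simplices and matched faces one by one, and it collapses to a single comparison once $h$ is written functionally, so the choice of this description is really the heart of the argument; I expect the main (modest) obstacle to be bookkeeping rather than any genuine difficulty.

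Finally I would read off the two boundary values. The maps $d_0,d_1\colon\Delta^0\to\Delta^1$ select the two vertices of $\Delta^1$, which in each degree $n$ correspond to the constant functions $\tau\equiv 1$ and $\tau\equiv 0$. Evaluating $h$ on them gives $h(u,\tau\equiv 1)=\mu\circ u=\mu_*(u)$ and $h(u,\tau\equiv 0)=\lambda\circ u=\lambda_*(u)$, so $h$ is a simplicial homotopy relating $\lambda_*$ and $\mu_*$, which proves the proposition. The only subtlety here is tracking the $d_0/d_1$ convention, but as the statement merely asserts that the two induced maps are homotopic, the direction of the homotopy is immaterial.
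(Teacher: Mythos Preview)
Your proof is correct and follows exactly the same idea as the paper: you construct the homotopy by the formula $h(u,\tau)(i)=\lambda(u(i))$ or $\mu(u(i))$ according to the value of $\tau(i)$, which is precisely the paper's $h(u,\varphi_i)=(\lambda(U_0),\dots,\lambda(U_{i-1}),\mu(U_i),\dots,\mu(U_n))$. The paper merely states this formula without verifying that $h$ lands in $K(\mathcal{U}')$ or that it is simplicial, whereas you supply both checks; so your write-up is a more complete version of the same argument.
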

\begin{proof}
Let $u=(U_0,...,U_n)$ be an element of $K_n(\mathcal{U})$, and $\varphi_i=(0,..,0,1,...,1)\in \Delta^{1}_n$ with the first $1$
at place $i$ between $0$ and $n+1$, we put
\begin{equation}
h(u,\varphi_i)=(\lambda(U_0),...,\lambda(U_{i-1}),\mu(U_i),...,\mu(U_n)).
\end{equation}
\end{proof}
\end{example}

\subsection{Cosimplicial local systems and their cohomology}\label{sec:simplicial-systems}
We present here a general definition of cohomology for cosimplicial local systems on simplicial sets.

\begin{definition} A \emph{cosimplicial local system} of sets $F$ over the simplicial set $K$ is a family $F_u$ indexed by the
elements $u$ of $K$, and, for any morphism $\varphi:[m]\rightarrow [n]$ and any $v\in K_n$, a given application $F(\varphi,v):F_u\rightarrow F_v$,
where $u=\varphi^{*}_{m,n}(v)$, such that $F(\psi,w)\circ F(\varphi, v)=F(\psi\circ \varphi,w)$, for $\varphi:[m]\rightarrow [n]$, $\psi:[n]\rightarrow [p]$, $w\in K_p$,
$v=\psi^{*}_{n,p}(w)$, $u=\varphi^{*}_{m,n}(v)$.
\end{definition}

\begin{remark} A definition of simplicial local systems appeared in the work of Halperin \cite{Halperin1983}. In his case the arrows are in the reverse direction,
i.e. for $\varphi:[m]\rightarrow [n]$, $v\in K_n$, a map $\varphi^{*}_v:F_v\rightarrow F_{\varphi^{*}(v)}$.
\end{remark}

\begin{example}[\v{C}ech system]\label{example:Cech-system} Take a presheaf $F$ over the topological space $X$ and consider an open covering $\mathcal{U}$ of $X$. Then for $u\in K(\mathcal{U})$, define $F_u=F(U_u)$, and for $\varphi:[m]\rightarrow [n]$, $v\in K_n$, take for
$F(\varphi,v)$  the restriction from $F(U_{\varphi^{*}_v})$ to $F(U_v)$. This defines a cosimplicial
system over $K(\mathcal{U})$.
\end{example}

\begin{example}[Upper and lower systems associated to a  functor]\label{example:systems-functor}
Let $F$ be a contravariant functor from $\mathcal{C}$ to the category of sets $\mathcal{E}$. We can define a cosimplicial local system $F^{*}$ over the nerve $N(\mathcal{C})$ (see Example \ref{example:nerve-category} for the definition and the notation), named the \emph{upper} system, by taking $F^{*}(a)=F(\alpha_n)$, and for a morphism $\psi:n\rightarrow p$, and an element $b=\beta_0\rightarrow ...\rightarrow \beta_p$ in $N_p(\mathcal{C})$, denoting by $\alpha_n$
the last element of $a=\psi^{*}(b)$, we have $\alpha_n=\beta_{\varphi(n)}$, and this comes with a canonical arrow $f$ going to $\beta_p$, then we take
$\psi^{*}_b=f^{*}$, going from $F_b=F(\beta_p)$ to $F_a=F(\alpha_n)$.

 In the dual manner, if $F$ is covariant, we can define the lower cosimplicial system $F_*$, by taking $F_*(a)=F(\alpha_0)$. Taking again the element $b$ and the morphism $\psi$, we use now the fact that the first element of $a=\psi^{*}(b)$ is $\alpha_0=\beta_{\varphi(0)}$, which comes with a canonical arrow $g$ in $\mathcal{C}$ from $b_0$ to it, and we can take $\varphi^{*}_b=g_{*}$ from $F_*(b)$ to $F_*(a)$.

 Replacing $\mathcal{C}$ by the opposite (or dual) category $\mathcal C^{op}$, we exchange contravariant functors with covariant ones, and lower systems with upper ones.
\end{example}

\begin{remark}
Introduce the category $\mathcal{S}(K)$, having for objects the elements of $K$, and for arrows between two elements $v\in K_n$ and $u\in K_m$
the elements $\varphi$ of $\Delta(m,n)$ such that $\varphi^{*}(v)=u$. Then a cosimplicial local system $F$ over $K$ is a contravariant functor (i.e. a presheaf)
from $\mathcal{S}(K)$ to the category of sets.
\end{remark}

\begin{definition}
Let $F$ be a cosimplicial local system over the simplicial set $K$; for each $n\in \mathbb{N}$,
a \emph{simplicial cochain} of $F$ of degree $n$ is an element $(c_u)_{u\in K_n}$ of
the product $C^{n}(K;F)= \prod_{u\in K_n} F_u$.
\end{definition}

 When $F$ is a local system of abelian groups, $C^{n}(K;F)$ has a natural structure of abelian group. In what follows, we stay
in this abelian context.

The \emph{coboundary operator} $\delta: C^{n}(K;F)\rightarrow C^{n+1}(K;F)$ is defined by
\begin{equation}
(\delta c)(v)=\sum_{i=0}^{n+1}(-1)^{i}F(d_i,v)(c(d_i^{*}(v))),
\end{equation}
for any element $v$ of $C^{n+1}(K;F)$. In the expression, $d_i^{*}$ is $K(d_i)$; remark that the sum takes place in $F_v$.  When we want to be  more precise, we write $\delta=\delta^{n+1}_{n}$ at degree $n$. The operator $\delta$ is also named the differential of the cochain
complex $C^{n}(K;F),n\in \mathbb{N}$.

\begin{proposition} For all $n\in \mathbb N$, the equality $\delta^{n+2}_{n+1}\circ \delta^{n+1}_{n}=0$ holds. In short, $\delta\circ\delta = 0$.
\end{proposition}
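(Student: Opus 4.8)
The plan is to run the standard cosimplicial cancellation argument, the only subtlety being to keep track of the twisting maps $F(\varphi,v)$ supplied by the local system. First I would fix $c\in C^{n}(K;F)$ and an element $v\in K_{n+2}$, and expand the double coboundary directly from the definition:
\[
(\delta\delta c)(v)=\sum_{j=0}^{n+2}(-1)^{j}\,F(d_j,v)\bigl((\delta c)(d_j^{*}(v))\bigr),
\]
then substitute the definition of $\delta c$ at degree $n$ to obtain
\[
(\delta\delta c)(v)=\sum_{j=0}^{n+2}\sum_{i=0}^{n+1}(-1)^{i+j}\,F(d_j,v)\Bigl(F(d_i,d_j^{*}(v))\bigl(c(d_i^{*}d_j^{*}(v))\bigr)\Bigr).
\]
The first move is to collapse the two successive transition maps using the defining functoriality of a cosimplicial local system, $F(\psi,w)\circ F(\varphi,v')=F(\psi\circ\varphi,w)$: taking $\psi=d_j$, $\varphi=d_i$ and $w=v$ gives $F(d_j,v)\circ F(d_i,d_j^{*}(v))=F(d_j\circ d_i,v)$, while contravariance of $K$ yields $d_i^{*}d_j^{*}=(d_j\circ d_i)^{*}$. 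Hence every summand depends only on the composite face map $d_j\circ d_i$.

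The second move is to invoke the cosimplicial face identity in $\Delta$, namely $d_j\circ d_i=d_i\circ d_{j-1}$ whenever $i<j$ (a short check against the explicit formula for the $d_i$ in Section \ref{appendix2:nerve} confirms it with the present conventions). I would then split the double sum into the part with $i\geq j$ and the part with $i<j$. In the first part I set $p=j$, $q=i$; in the second part I rewrite $d_j\circ d_i=d_i\circ d_{j-1}$ and set $p=i$, $q=j-1$. Both reindexings range over exactly the set $\{(p,q):0\leq p\leq q\leq n+1\}$ and produce the \emph{same} term $F(d_p\circ d_q,v)\bigl(c((d_p\circ d_q)^{*}(v))\bigr)$, but the signs are $(-1)^{p+q}$ and $(-1)^{p+(q+1)}=-(-1)^{p+q}$ respectively. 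The two families therefore cancel term by term, giving $(\delta\delta c)(v)=0$ for every $v$, i.e. $\delta^{n+2}_{n+1}\circ\delta^{n+1}_{n}=0$.

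I expect the only real obstacle to be the bookkeeping: checking that the two reindexings cover the common index set $\{(p,q):0\leq p\leq q\leq n+1\}$ bijectively, with no overlap or omission, and that the signs come out opposite. This is purely combinatorial and identical to the classical computation for simplicial cohomology; the presence of the local system changes nothing once the composition law $F(\psi,w)\circ F(\varphi,v')=F(\psi\circ\varphi,w)$ has been used to reduce each pair of maps to a single $F(d_j\circ d_i,v)$. No finiteness or topological hypotheses enter.
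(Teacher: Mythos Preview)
Your argument is correct and follows exactly the same route as the paper's proof: expand $(\delta\delta c)(v)$ as a double sum, use the functoriality of $F$ to collapse each pair of transition maps into a single $F(d_j\circ d_i,v)$, apply the cosimplicial face identity $d_j d_i=d_i d_{j-1}$ for $i<j$, and observe pairwise cancellation. The paper merely sketches the last two steps, whereas you carry out the reindexing explicitly; there is no difference in substance.
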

\begin{proof}
The expression of $\delta\circ\delta (c)(w)$ is the sum of elementary terms of the form $(-1)^{k}F(d^{n+1}_j\circ d^{n}_i,w)(c(d^{*}_i\circ d^{*}_jw))$,
with $i\neq j$ and $k=i+j$ if $j< i$, $k=i+j+1$ if $j> i$.

It is easy to verify that the maps $d$ satisfy the relation $d_j d_i = d_i d_{j-1}$ if $i<j$ \cite[Ex.~8.1.1]{Weibel1994}. It follows that the terms in the sum  cancel two by two.\footnote{This is a short argument with big consequences. When did it appear for the first time? Who came up with it? Euler, Poincaré, Noether, Lefschetz, Alexander?}
\end{proof}

 A sequence $\{C^{n}\}_{n\in \mathbb{N}}$ of abelian groups with an operator $\delta$ of degree $+1$ and square zero, is named a differential complex, or a cochain complex.

 \begin{definition}[Cohomology of a cosimplicial local system]\label{def:cohomology-local-system}
  The cohomology group in degree $n\in \mathbb N$ of the local system $F$ over the simplicial set $K$  is the quotient abelian group
$$H^{n}(K;F)=\Ker (\delta^{n+1}_{n})/\im (\delta^{n}_{n-1}).$$
By convention $\delta_{-1}^0=0$.

Equivalently, the cohomology $H^\bullet(K;F)$ of $F$ over $K$, seen as graded vector space, is the cohomology of the complex of simplicial cochains $(C^\bullet(K,F),\delta)$.
 \end{definition}

As usual, the elements of $\ker \delta^{n}$ are called $n$-cocycles, and those in the image of $\delta^{n-1}$ are the $n$-coboundaries.

 For example, a $0$-cochain is a collection $(c_u)_{u\in K_0}$, and it a $0$-cocycle if for any $v\in K_2$,
\begin{equation}
0 = F(d_0,v)(c_{d_0^* v}) - F(d_1,v)(c_{d_1^*v}).
\end{equation}


In the particular case of an open covering $\mathcal{U}$ of a topological space $X$, and a presheaf of abelian groups $F$ on $X$, the group
$H^{0}(K_\mathcal{U};F)$ (for the associated local system on the nerve of the covering $\mathcal{U}$) coincide with the set of global sections of $F$ on $X$,
i.e. the families $(c_U)\in \prod_{ U\in \mathcal{U}} F(U)$ of local sections of $F$ whose restriction coincide on the non-empty intersections $U\cap V$, $ U,V\in \mathcal{U}$.

\begin{remark}\label{rmk:comparison-local-system-and-sheaf-resolution}
Let $G$ be a contravariant functor over a poset $\mathcal A$. The simplicial cochain complex $(C^n(N(\mathcal A^{op}), G^*),\delta)$ associated to the upper local system of $G:\mathcal A^{op}\to \mathcal E$, in the sense of the preceding definitions, is precisely the cochain complex introduced by \eqref{eq:complex-sheaf-cohomology}-\eqref{eq:coboundary-sheaf-cohomology}. One could also compute this cohomology from the complex $(C^n(N(\mathcal A), G_*),\delta)$: the cochains are the same, and the differential only differs by a sign when $n$ is odd.

Similarly, if $G$ is covariant over $\mathcal A$, then one can see it as a presheaf on $\mathcal A^{op}$; its sheaf cohomology can be computed as the cohomology of  $(C^n(N(\mathcal A), G^*),\delta)$ or $(C^n(N(\mathcal A^{op}), G_*),\delta)$

As mentioned before, these complexes were rediscovered by O. Peltre \cite{Peltre2019} for understanding geometrically the generalized belief propagation algorithm
of \cite{Yedidia2005}.
\end{remark}

Given two cochain complexes of abelian groups $C^{\bullet}$ and $D^{\bullet}$, whose differential operators have degree $+1$
(i.e. ascending complexes) a cochain map (or cochain morphism) is a collection $\{f^n:C^n\to D^n\}_{n\in \mathbb Z}$ of morphism of groups
that commute with the differentials. In other words, it is a morphism of graded abelian groups of degree zero that commute with the differentials.

A chain map between two cochain complexes sends coboundaries to coboundaries and cocycles to cocycles, thus it induces a map at the level of cohomology.

\begin{example}
Let $\mathcal{U}$ be a refinement of $\mathcal{U}'$, and $\lambda: \mathcal{U} \rightarrow\mathcal{U}'$ an adapted projection. The simplicial morphism $\lambda_{*}$ of the
simplicial set $K(\mathcal{U})$ to the simplicial set $K(\mathcal{U}')$ in the last section induces, for each integer $n$, a map $\lambda^{*}$ from $C^{n}(\mathcal{U}';F)$ to $C^{n}(\mathcal{U};F)$ defined by $\lambda^{*}(c')=c'\circ \lambda_*$. More concretely,
\begin{equation}
(\lambda^{*}c')(U_0,...,U_n)=c'(\lambda(U_0),...,\lambda(U_n)).
\end{equation}
This map commutes with the \v{C}ech differential, then it induces a map in cohomology
\begin{equation}
\lambda^{*}:H^{n}(\mathcal{U}';F)\rightarrow H^{n}(\mathcal{U};F).
\end{equation}
\end{example}

 Given two cochain maps $f^{\bullet},g^{\bullet}$, a cochain homotopy from $f^{\bullet}$ to $g^{\bullet}$ is a morphism of graded groups $h$
from $C^{\bullet}$ to $D^{\bullet}$ of degree $-1$ such that
\begin{equation}
d\circ h+h\circ d=g-f.
\end{equation}
This defines an equivalence relation on cochain maps (of degree zero) which is compatible with the composition of maps.

\begin{proposition}[\protect{\cite[Thm.~4.4]{Eilenberg1952}}] If two cochain morphisms are homotopic, they give the same application in cohomology.
\end{proposition}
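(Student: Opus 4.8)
The plan is to unwind the definition of cohomology directly, taking an arbitrary cocycle and using the homotopy relation to exhibit the difference of its two images as an explicit coboundary. Because the defining identity $d\circ h + h\circ d = g-f$ already packages everything, there is essentially no obstacle; the argument is a one-line computation, and the only thing to watch is the bookkeeping of the degree shift carried by $h$.

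First I would fix $n\in\mathbb{N}$ and a cocycle $c\in C^n$, so that $dc = 0$. Recall that $h$ is a graded morphism of degree $-1$, so that its component in degree $n$ is a map $h^n\colon C^n\to D^{n-1}$, and that both $f^\bullet$ and $g^\bullet$ commute with the differentials, hence send cocycles to cocycles and coboundaries to coboundaries; in particular the induced maps on cohomology are well defined, sending the class $[c]$ to $[f^n(c)]$ and $[g^n(c)]$ respectively. Thus it suffices to show that $g^n(c)-f^n(c)$ is a coboundary in $D^\bullet$.

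Next I would evaluate the homotopy identity on the cocycle $c$, obtaining
\[
g^n(c)-f^n(c) = (d\circ h + h\circ d)(c) = d\bigl(h^n(c)\bigr) + h^{n+1}(dc).
\]
Since $c$ is a cocycle, $dc = 0$, so the second summand vanishes and we are left with $g^n(c)-f^n(c)=d\bigl(h^n(c)\bigr)$, which lies in the image of $d\colon D^{n-1}\to D^n$ by construction.

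Finally I would conclude that $[g^n(c)]=[f^n(c)]$ in $H^n(D^\bullet)$ for every cocycle $c\in C^n$, and consequently that the induced maps $f^\ast,g^\ast\colon H^n(C^\bullet)\to H^n(D^\bullet)$ coincide. The genuine difficulty here is nil: the content of the proposition is entirely absorbed into the definition of cochain homotopy, and the proof is simply its immediate consequence.
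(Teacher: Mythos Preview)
Your proof is correct and follows exactly the same approach as the paper's: evaluate the homotopy identity on a cocycle $c$, use $dc=0$ to kill the $h\circ d$ term, and conclude that $g(c)-f(c)=d(h(c))$ is a coboundary. The paper compresses this into a single sentence, but the content is identical.
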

\begin{proof}
If $c$ is a cocycle of $C$, and if $h$ is an homotopy from $f$ to $g$, then $dh(c)=g(c)-f(c)$, thus
$g(c)$ and $f(c)$ have the same classes in cohomology.
\end{proof}

\begin{definition}[Lift of simplicial map to a local system]\label{def:lift-local-system}
let $F$ (resp. $G$) be a cosimplicial local system over the simplicial set $K$ (resp. $L$), and $f:K\rightarrow L$ a simplicial map,
a \emph{lift} $\widetilde{f}$ of $f$ from $G$ to $F$ is family of maps $\widetilde{f}_u:G_{f(u)}\to F_u$, for each $u\in K$, such that,
for any morphism $\varphi: m\rightarrow n$,
any $v\in K_n$, $u=\varphi^{*}v \in K_m$,
\begin{equation}
F(\varphi,v)\circ \widetilde{f}_u=\widetilde{f}_v\circ G(\varphi,f(v)).
\end{equation}
\end{definition}

An example is given by a morphism $(f,\varphi)$ between a presheaf $\mathcal{F}$ over $X$ and a presheaf $\mathcal{G}$ over $Y$, when
we consider two open coverings $\mathcal{U}$ and $\mathcal{V}$ of $X$ and $Y$  respectively, such that $\mathcal{U}$ is finer than
the open covering $f^{-1}(\mathcal{V})$. In this case, we choose a projection $\lambda$ from $\mathcal{U}$ to $\mathcal{V}$ i.e.
$\forall U\in \mathcal{U}$, $U\subseteq f^{-1}\lambda(U)$. As we have seen, this defines a simplicial map from $K(\mathcal{U})$ to $K(\mathcal{V})$,
that we write $f_\lambda$; then, for $u=(U_0,...,U_n)$, the group $G(f_\lambda(u))$ can be identified with $G(\bigcap_{i=0}^{n}(\lambda (U_i))$,
and for every element $g$ in this group, we pose
\begin{equation}
\widetilde{f_\lambda}_u(g)=\varphi (g)\in F(\bigcap_{i=0}^{n}(U_i));
\end{equation}
because $\varphi$ can be seen as a morphism of $f^{-1}\mathcal{G}$ to $\mathcal{F}$.\\

\begin{definition}
Two pairs $(f,\widetilde{f})$, $(g,\widetilde{g})$ of morphisms and lifts are \emph{simplicially homotopic} if there exists a simplicial homotopy
$h:K\times \Delta^{1}\rightarrow L$
from $f$ to $g$, and a family of maps $\widetilde{h}_{u,s}, u\in K, s\in \Delta^{1}$ from $G_{h(u,s)}$ to $F_{u}$, such that
$\widetilde{f}=\widetilde{j_0}\circ\widetilde{h}$
and $\widetilde{g}=\widetilde{j_1}\circ \widetilde{h}$, where $j_0=\id_K\times s_0$ and $j_1=\id_K\times s_1$.
\end{definition}

As a consequence of Proposition \ref{prop:homotopy_from_projection}, two choices of projections in the construction of the map of local systems
associated to a morphism of presheaves gives two homotopic morphisms in the simplicial sense.

 Suppose given two local systems $F,G$ over $K,L$ respectively, and a lift $\widetilde{f}$ of $f:K\rightarrow L$. The following formula defines a natural
morphism $\widetilde{f}^{*}$ from $C^{\bullet}(L;G)$ to $C^{\bullet}(K;F)$:
\begin{equation}
\widetilde{f}^{*}(c_L)(u)=\widetilde{f}_u(c_L(f(u)).
\end{equation}

\begin{lemma}\label{lem3-1}
$\widetilde{f}^{*}$ commutes with the differentials.
\end{lemma}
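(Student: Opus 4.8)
The goal is to show that $\widetilde{f}^{*}\colon C^{\bullet}(L;G)\to C^{\bullet}(K;F)$ is a morphism of cochain complexes, i.e. that $\delta\circ\widetilde{f}^{*}=\widetilde{f}^{*}\circ\delta$. The plan is to verify this directly, by evaluating both composites on an arbitrary $n$-cochain $c\in C^{n}(L;G)$ and an arbitrary simplex $v\in K_{n+1}$, and comparing the two resulting elements of $F_v$ term by term with respect to the index $i$.

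First I would expand the left-hand side. Using the definition of $\delta$ and then of $\widetilde{f}^{*}$,
$$
\delta(\widetilde{f}^{*}c)(v)=\sum_{i=0}^{n+1}(-1)^{i}F(d_i,v)\big((\widetilde{f}^{*}c)(d_i^{*}v)\big)=\sum_{i=0}^{n+1}(-1)^{i}F(d_i,v)\big(\widetilde{f}_{d_i^{*}v}(c(f(d_i^{*}v)))\big).
$$
Next I would expand the right-hand side. Since $f$ is a simplicial map it preserves degree, so $f(v)\in L_{n+1}$, and
$$
\widetilde{f}^{*}(\delta c)(v)=\widetilde{f}_v\big((\delta c)(f(v))\big)=\sum_{i=0}^{n+1}(-1)^{i}\,\widetilde{f}_v\big(G(d_i,f(v))(c(d_i^{*}f(v)))\big).
$$

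The comparison then rests on two facts, applied termwise. The first is that $f$ commutes with the face operators: taking $\varphi=d_i$ in the naturality square defining a simplicial morphism gives $f(d_i^{*}v)=d_i^{*}f(v)$, so the arguments $c(f(d_i^{*}v))$ and $c(d_i^{*}f(v))$ coincide and both lie in $G_{f(d_i^{*}v)}=G_{d_i^{*}f(v)}$. The second is the defining compatibility of the lift $\widetilde{f}$: specializing its identity $F(\varphi,v)\circ\widetilde{f}_{\varphi^{*}v}=\widetilde{f}_v\circ G(\varphi,f(v))$ to $\varphi=d_i$ yields $F(d_i,v)\circ\widetilde{f}_{d_i^{*}v}=\widetilde{f}_v\circ G(d_i,f(v))$. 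Applying this equality of maps to the element $c(f(d_i^{*}v))$ shows that the $i$-th summand on the left equals the $i$-th summand on the right; summing over $i$ with the signs $(-1)^{i}$ gives $\delta\widetilde{f}^{*}=\widetilde{f}^{*}\delta$.

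The computation is essentially bookkeeping, and the only point requiring care---the one I would treat as the main (minor) obstacle---is tracking the sources and targets of the structure maps $F(d_i,v)$, $G(d_i,f(v))$ and of the lift components $\widetilde{f}_{d_i^{*}v}$, so as to be sure the lift condition is applied to an element lying in its correct domain $G_{f(d_i^{*}v)}$. Establishing the identity $d_i^{*}f(v)=f(d_i^{*}v)$ first makes these two domains literally equal, after which the termwise matching via the lift condition is unambiguous. I note that only the face operators enter, so neither the degeneracies nor the full naturality of $\widetilde{f}$ for general $\varphi$ are needed here.
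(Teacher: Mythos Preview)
Your proof is correct. The paper in fact states this lemma without proof, treating it as a routine verification; your direct termwise computation, using the simplicial identity $f(d_i^{*}v)=d_i^{*}f(v)$ and then specializing the lift condition to $\varphi=d_i$, is precisely the standard argument one would supply.
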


\begin{definition}
Let $G$ be a cosimplicial local system over a simplicial set $L$, and $f$ a
simplicial map from $K$ to $L$, the family $F_u=G_{f(u)}$, for $u\in K$, with the maps $F(\varphi, v)=G(\varphi, f(v))$ is a cosimplicial local
system over $K$, named the pull-back of $G$, and denoted by $f^{*}(G)$.
\end{definition}

\begin{example}
 Start with a cosimplicial local system $F$ over a simplicial set $K$; then, over the product $K\times \Delta^{1}$, we define a
local system $\pi^{*}F$ by taking, for $u\in K_n$ and $s\in \Delta^{1}$, $\pi^{*}F(u,s)=F(u)$. Then consider the two injections $j_0=Id_K\times s_0$ and
$j_1=Id_K\times s_1$ from $K=K\times \Delta^{0}$ to $K\times \Delta^{1}$; the two pull-back $j_0^{*}\pi^{*}F$ and $j_1^{*}\pi^{*}F$ coincide with $F$.
Thus we have evident lifts $\widetilde{j_0}$ and $\widetilde{j_1}$ from $F$ to $\pi^{*}F$. They are homotopic in the simplicial sense, the map $h$
from $K\times \Delta^{1}$ to itself being the identity, and the lift being the natural identification.
\end{example}

From $C^{\bullet}(K\times \Delta^{1};\pi^{*}F)$
to $C^{\bullet}(K;F)$, the two chain maps $\widetilde{j_0}^{*}$ and $\widetilde{j_1}^{*}$ are given by the following formulas, for $c$ be a $n$-cochain of
$K\times \Delta^{1}$ with value in $\pi^{*}F)$, and $u$ an element of $K_{n}$
\begin{align}
\widetilde{j_0}^{*}c(u)&=c(u,(0,...,0)),\\
\widetilde{j_1}^{*}c(u)&=c(u,(1,...,1)).
\end{align}

\begin{lemma}\label{lem3-2}
 $\widetilde{j_0}^{*}$ and $\widetilde{j_1}^{*}$ are homotopic as chain maps.
 \end{lemma}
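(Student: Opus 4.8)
The plan is to construct an explicit cochain homotopy (a \emph{prism operator}) $h$ of degree $-1$ from $C^{\bullet}(K\times\Delta^{1};\pi^{*}F)$ to $C^{\bullet}(K;F)$ satisfying
\[
\delta\circ h + h\circ\delta = \widetilde{j_1}^{*} - \widetilde{j_0}^{*},
\]
in direct analogy with the classical proof that the two end inclusions of a cylinder are chain homotopic. First I would fix notation for $\Delta^{1}_{n}=\Hom([n],[1])$: its elements are the monotone maps $\varphi_j$, with $0\le j\le n+1$, having $j$ leading zeros followed by $n+1-j$ ones, exactly as in Proposition \ref{prop:homotopy_from_projection}. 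With this convention $\widetilde{j_0}^{*}c(u)=c(u,\varphi_{n+1})$ (all zeros) and $\widetilde{j_1}^{*}c(u)=c(u,\varphi_{0})$ (all ones), for $u\in K_n$.

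The geometric idea is the standard triangulation of the prism $\Delta^{n-1}\times\Delta^{1}$ into $n$ nondegenerate $n$-simplices indexed by $i=0,\dots,n-1$: the $i$-th one is the staircase path whose $K$-coordinate is the degeneracy $s_i\colon[n]\to[n-1]$ and whose $\Delta^{1}$-coordinate is $\varphi_{i+1}$. Over a simplex $u\in K_{n-1}$ these pull back to the elements $(s_i^{*}(u),\varphi_{i+1})\in(K\times\Delta^{1})_{n}$, where $s_i^{*}(u)=K(s_i)(u)$. Accordingly I define, for $c\in C^{n}(K\times\Delta^{1};\pi^{*}F)$ and $u\in K_{n-1}$,
\[
h(c)(u)=\sum_{i=0}^{n-1}(-1)^{i}\,F(s_i,u)\bigl(c(s_i^{*}(u),\varphi_{i+1})\bigr),
\]
where $F(s_i,u)\colon F_{s_i^{*}(u)}\to F_u$ is the structure map of the local system attached to the degeneracy $s_i$ and the simplex $u$, and where $c(s_i^{*}(u),\varphi_{i+1})$ lies in $\pi^{*}F_{(s_i^{*}(u),\varphi_{i+1})}=F_{s_i^{*}(u)}$, so that the expression is well typed.

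The only substantial step is the verification of the homotopy identity. I would expand $\delta h(c)(w)$ and $h\delta(c)(w)$ for $w\in K_n$ as double sums over the faces of the prism simplices, rewrite each composite structure map by the functoriality $F(\psi,w)\circ F(\varphi,v)=F(\psi\varphi,w)$ of the local system, and simplify indices using the simplicial identities relating face and degeneracy maps, namely \eqref{rcds} and its two companions, together with the compatibility $\pi^{*}F(\varphi,(v,t))=F(\varphi,v)$ that lets the two differentials match up. As in the classical prism argument, the terms coming from faces $d_k s_i$ with $k\notin\{i,i+1\}$ cancel in pairs between $\delta h$ and $h\delta$, the diagonal faces telescope, the $\Delta^{1}$-degenerate faces (those $\varphi_j$ with a repeated value) collapse through $F$ applied to a degeneracy, and only the two external faces survive, yielding $c(w,\varphi_{0})-c(w,\varphi_{n+1})=\widetilde{j_1}^{*}c(w)-\widetilde{j_0}^{*}c(w)$. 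The main obstacle is precisely this sign and index bookkeeping: one must track which faces of each prism simplex are internal (hence cancel) versus external (hence contribute), and confirm that the signs $(-1)^{i}$ produce $\widetilde{j_1}^{*}-\widetilde{j_0}^{*}$ and not its negative. Once the identity is established on cochains, $h$ is by definition a cochain homotopy from $\widetilde{j_0}^{*}$ to $\widetilde{j_1}^{*}$, which is the assertion of the lemma.
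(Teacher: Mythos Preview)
Your approach is essentially identical to the paper's: both construct the standard prism operator $H$ by summing over the staircase simplices $(s_i^{*}(u),\varphi)$ of the triangulated cylinder, applying the degeneracy structure maps $F(s_i,u)$ of the local system, and then verifying the homotopy identity via the simplicial relations \eqref{rcds}. The only discrepancies are cosmetic: the paper indexes the $\Delta^{1}$-coordinate by the position of the first $1$ (its $1^{n+1}_j$) rather than by your shifted $\varphi_{i+1}$, and writes the sum with one extra index (an apparent off-by-one in the upper limit), but the surviving boundary terms are the same $c(u,(1,\dots,1))-c(u,(0,\dots,0))$ that you anticipate.
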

 \begin{proof}
 Let $C$ be a $(n+1)$-cochain of $K\times \Delta^{1}$ with value in $\pi^{*}F$, and $u$ an element of $K_{n}$,
we pose
\begin{equation}
H(C)(u)=\sum_{j=0}^{n+1}(-1)^{j}F(s_j,u)(C(s_j^{*}(u),1^{n+1}_j)),
\end{equation}
where, for $n\in \mathbb{N}$ and $j\in [n+2]$,  $1^{n+1}_j$ denotes the element $(0,...,0,1,...,1)$ of $\Delta^{1}_{n+1}$, where the first $1$ is at the place $j$.
That gives $1^{n+1}_{n+2}=(0,...,0)=s_0(0)$ and $1^{n+1}_{0}=(1,...,1)=s_1(0)$.\\
This defines an endomorphism $H$ of degree $-1$ of $C^{\bullet}(K;F)$. Now we compute, for $c\in C^{n}(K\times \Delta^{1};\pi^{*}F)$:
\begin{equation}
H(\delta c)(u)=\sum_{j=0}^{n+1}\sum_{k=0}^{n+1}(-1)^{j+k}F(s_j,u)\circ F(d_k,s_j^{*}u)c(d_k^{*}s_j^{*}u,d_k^{*}1^{n+1}_j),
\end{equation}
and
\begin{equation}
\delta (H(c)) (u)=\sum_{j=0}^{n}\sum_{k=0}^{n}(-1)^{j+k}F(d_k,u)\circ F(s_j,d_k^{*}u)c(s_j^{*}d_k^{*}u,1^{n+1}_j).
\end{equation}
Let us add $H(\delta c)(u)$ and $\delta (H(c)) (u)$, in virtue of the relations \eqref{rcds}, most of the terms annihilate.
The only ones that survive correspond to the terms $(s_jd_j)^{*}$ and $(s_jd_{j-1})^{*}$. Note that $d_j^{*}1^{n+1}_j=1^{n}_{j}$
and that $d_{j-1}^{*}1^{n+1}_j=1^{n}_{j-1}$, then, due to the signs, they annihilate two by two,
except the extreme terms, for $j=0$ and $j=n+1$, giving
\begin{multline}
\delta (H(c)) (u)+H(\delta c)(u)=c(u,1^{n}_0)-c(u,1^{n}_{n+1})\\=c(u,(1,...,1)-c(u,(0,...,0)).
\end{multline}
Then $H$ is a chain homotopy operator from $\widetilde{j_0}^{*}$ to $\widetilde{j_1}^{*}$, as we desired.
 \end{proof}

\begin{proposition}
Let $f,g$ be two simplicial maps from the simplicial set $K$ to the simplicial set $L$,
let $F,G$ be cosimplicial systems over $K$ and $L$ respectively, and $\widetilde{f}, \widetilde{g}$ two lifts over $K$;
suppose that the pais $(f,\widetilde{f})$, $(g,\widetilde{g})$ are simplicially homotopic, then the induced maps of cochains complexes
are homotopic.
\end{proposition}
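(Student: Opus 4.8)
The plan is to assemble the statement from the two preceding lemmas together with the behaviour of the construction $\widetilde{f}\mapsto \widetilde{f}^{*}$ under composition of lifts. First I would record the (contravariant) functoriality of this construction: if $p:K'\to K$ carries a lift $\widetilde{p}$ of $p$ from $F$ to $F'$ and $q:K\to L$ carries a lift $\widetilde{q}$ of $q$ from $G$ to $F$, then $q\circ p$ carries the composite lift $(\widetilde{q\circ p})_w=\widetilde{p}_w\circ \widetilde{q}_{p(w)}$, and a one-line unwinding of the defining formula $\widetilde{f}^{*}(c)(u)=\widetilde{f}_u(c(f(u)))$ gives $(\widetilde{q\circ p})^{*}=\widetilde{p}^{*}\circ \widetilde{q}^{*}$ as morphisms of cochain complexes. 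This is a purely formal verification.

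Next I would bring in the homotopy data. By hypothesis there is a simplicial homotopy $h:K\times\Delta^{1}\to L$ from $f$ to $g$ and a lift $\widetilde{h}$ of $h$ from $G$ to $\pi^{*}F$, with $\widetilde{f}=\widetilde{j_0}\circ\widetilde{h}$ and $\widetilde{g}=\widetilde{j_1}\circ\widetilde{h}$, where $j_0=\id_K\times s_0$ and $j_1=\id_K\times s_1$ satisfy $h\circ j_0=f$ and $h\circ j_1=g$. By Lemma \ref{lem3-1} the map $\widetilde{h}^{*}:C^{\bullet}(L;G)\to C^{\bullet}(K\times\Delta^{1};\pi^{*}F)$ commutes with $\delta$, and likewise $\widetilde{j_0}^{*},\widetilde{j_1}^{*}:C^{\bullet}(K\times\Delta^{1};\pi^{*}F)\to C^{\bullet}(K;F)$ are chain maps. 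Applying the composition formula of the first step to $f=h\circ j_0$ and $g=h\circ j_1$ then yields the two identifications $\widetilde{f}^{*}=\widetilde{j_0}^{*}\circ\widetilde{h}^{*}$ and $\widetilde{g}^{*}=\widetilde{j_1}^{*}\circ\widetilde{h}^{*}$.

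Finally I would invoke Lemma \ref{lem3-2}: the explicit operator $H$ built there is a cochain homotopy with $\delta H+H\delta=\widetilde{j_1}^{*}-\widetilde{j_0}^{*}$ on $C^{\bullet}(K\times\Delta^{1};\pi^{*}F)$. Setting $\mathcal{H}=H\circ\widetilde{h}^{*}$ and using that $\widetilde{h}^{*}$ commutes with the differential, I obtain
\[
\delta\mathcal{H}+\mathcal{H}\delta=(\delta H+H\delta)\circ\widetilde{h}^{*}=(\widetilde{j_1}^{*}-\widetilde{j_0}^{*})\circ\widetilde{h}^{*}=\widetilde{g}^{*}-\widetilde{f}^{*},
\]
so $\mathcal{H}$ is the desired cochain homotopy between the induced maps. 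The only genuinely delicate points, as opposed to routine bookkeeping with variances and directions, are verifying that $\widetilde{h}$ really satisfies the lift-compatibility of Definition \ref{def:lift-local-system} so that Lemma \ref{lem3-1} applies to $\widetilde{h}^{*}$, and confirming the identification $\widetilde{f}^{*}=\widetilde{j_0}^{*}\circ\widetilde{h}^{*}$ (and its $g$-analogue) from $\widetilde{f}=\widetilde{j_0}\circ\widetilde{h}$ via the composition formula; once these are in place the conclusion is immediate.
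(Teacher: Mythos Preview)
Your proof is correct and follows essentially the same approach as the paper: factor $\widetilde{f}^{*}$ and $\widetilde{g}^{*}$ through $\widetilde{h}^{*}$ and the two maps $\widetilde{j_0}^{*},\widetilde{j_1}^{*}$, then compose the homotopy $H$ of Lemma~\ref{lem3-2} with the chain map $\widetilde{h}^{*}$ from Lemma~\ref{lem3-1}. In fact your treatment is a bit more careful: you make the contravariant functoriality $(\widetilde{q\circ p})^{*}=\widetilde{p}^{*}\circ\widetilde{q}^{*}$ explicit and write the homotopy as $H\circ\widetilde{h}^{*}$, which is the order that type-checks (the paper's displayed formula has the factors reversed).
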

\begin{proof}
The map $\widetilde{h}$ is a pullback of the simplicial map $h$ to the local systems $ \pi^{*}F $ on $K\times \Delta^{1}$
and $G$ on $L$. Thus wee get a chain-map
\begin{equation}
\widetilde{h}^{*}:C^{\bullet}(L;G)\rightarrow C^{\bullet}(K\times \Delta^{1};\pi^{*}F).
\end{equation}
On the other side, we have two natural cochain maps, for $k=0,1$,
\begin{equation}
\widetilde{j_k}^{*}:C^{\bullet}(K\times \Delta^{1};\pi^{*}F)\rightarrow C^{\bullet}(K;F).
\end{equation}
Applying the Lemma \ref{lem3-2}, there exists an homotopy from $\widetilde{j_0}^{*}$ to $\widetilde{j_1}^{*}$:
\begin{equation}
H:C^{\bullet}(K;F)\rightarrow C^{\bullet-1}(K;F);
\end{equation}
therefore, applying the Lemma \ref{lem3-1}:
\begin{multline}
\widetilde{g}^{*}-\widetilde{f}^{*}=\widetilde{h}^{*}\circ(\widetilde{j_1}^{*}-\widetilde{j_0}^{*})\\
=\widetilde{h}^{*}\circ(d\circ H+H\circ d)=d\circ(\widetilde{h}^{*}\circ H)+(\widetilde{h}^{*}\circ H).
\end{multline}
\end{proof}

\begin{corollary}
The induced morphisms in cohomology are the same.
\end{corollary}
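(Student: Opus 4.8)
The plan is to read off the corollary immediately from the two propositions that precede it, so that almost nothing is left to do. The preceding Proposition has just established that, whenever $(f,\widetilde{f})$ and $(g,\widetilde{g})$ are simplicially homotopic, the induced cochain maps $\widetilde{f}^{*},\widetilde{g}^{*}\colon C^{\bullet}(L;G)\to C^{\bullet}(K;F)$ are homotopic as chain maps; concretely it produced a degree $-1$ operator $\widetilde{h}^{*}\circ H$ satisfying $\widetilde{g}^{*}-\widetilde{f}^{*}=d\circ(\widetilde{h}^{*}\circ H)+(\widetilde{h}^{*}\circ H)\circ d$. The earlier Proposition attributed to \cite[Thm.~4.4]{Eilenberg1952} records the general principle that homotopic cochain morphisms induce the same map in cohomology. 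Combining the two statements gives the assertion directly.

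If one prefers to spell this out, I would argue as follows. A cochain map sends cocycles to cocycles and coboundaries to coboundaries, hence it descends to a well-defined map on cohomology. Now take any cocycle $c\in C^{\bullet}(L;G)$, so that $d c=0$. Evaluating the homotopy relation on $c$ and using $(\widetilde{h}^{*}\circ H)(dc)=0$ yields
\begin{equation}
\widetilde{g}^{*}(c)-\widetilde{f}^{*}(c)=d\bigl((\widetilde{h}^{*}\circ H)(c)\bigr),
\end{equation}
so the two images differ by a coboundary and therefore represent the same class in $H^{\bullet}(K;F)$. Since this holds for every cocycle, the induced maps $\widetilde{f}^{*}$ and $\widetilde{g}^{*}$ on $H^{\bullet}(L;G)\to H^{\bullet}(K;F)$ coincide, which is exactly the statement of the corollary.

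There is no genuine obstacle here: all of the real content has already been packaged into the chain homotopy constructed in Lemma \ref{lem3-2} and assembled in the preceding Proposition. The only thing to keep straight is purely formal, namely that the operator $\widetilde{h}^{*}\circ H$ is a well-defined endomorphism of degree $-1$ of the correct complex, obtained by pulling back along $\widetilde{h}^{*}$ the homotopy $H$ of Lemma \ref{lem3-2}; once this bookkeeping is in place, the passage from chain homotopy to equality in cohomology is immediate.
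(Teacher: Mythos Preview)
Your argument is correct and matches the paper's approach: the paper states the corollary with no proof at all, treating it as an immediate consequence of the preceding Proposition (simplicial homotopy gives chain homotopy) together with the earlier Proposition \cite[Thm.~4.4]{Eilenberg1952} (chain-homotopic maps coincide in cohomology). Your write-up simply spells out this deduction explicitly; one small slip is calling $\widetilde{h}^{*}\circ H$ an ``endomorphism'' when it is in fact a degree $-1$ map $C^{\bullet}(L;G)\to C^{\bullet}(K;F)$, but this does not affect the argument.
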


\begin{theorem}[cf. \protect{\cite[Ch.~IX]{Eilenberg1952}}]
If $\mathcal{U}$ is a refinement of $\mathcal{U}'$, two projections $\lambda, \mu$
from $\mathcal{U}$ to $\mathcal{U}'$ give the same application in cohomology.
\end{theorem}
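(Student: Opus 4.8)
The plan is to recognize the statement as the culmination of the simplicial machinery of this section: it follows by combining Proposition \ref{prop:homotopy_from_projection} with the two results established just above, that simplicially homotopic pairs $(f,\widetilde f)$, $(g,\widetilde g)$ induce cochain-homotopic maps and hence the same morphism in cohomology. Fix a presheaf $F$ on $X$ and write $F$ also for the associated \v{C}ech local system (Example \ref{example:Cech-system}) on the nerves $K(\mathcal U)$ and $K(\mathcal U')$. A projection $\lambda$ determines the simplicial map $\lambda_*:K(\mathcal U)\to K(\mathcal U')$ together with its canonical lift $\widetilde{\lambda_*}$, whose component at $u=(U_0,\dots,U_n)$ is the restriction $F(U_{\lambda_*(u)})\to F(U_u)$; this restriction is defined because $U_i\subseteq\lambda(U_i)$ forces $U_u\subseteq U_{\lambda_*(u)}$. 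The induced cochain map $\widetilde{\lambda_*}^{*}$ coincides with the map $\lambda^{*}$ on \v{C}ech cochains defined before, and the same holds for $\mu$. Thus it suffices to show that $(\lambda_*,\widetilde{\lambda_*})$ and $(\mu_*,\widetilde{\mu_*})$ are simplicially homotopic.

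First I would take the explicit homotopy $h:K(\mathcal U)\times\Delta^{1}\to K(\mathcal U')$ from $\lambda_*$ to $\mu_*$ produced by Proposition \ref{prop:homotopy_from_projection}, given on vertices by $h(u,\varphi_i)=(\lambda(U_0),\dots,\lambda(U_{i-1}),\mu(U_i),\dots,\mu(U_n))$. I would lift it to a map $\widetilde h$ of local systems from $F$ on $K(\mathcal U')$ to the pull-back $\pi^{*}F$ on $K(\mathcal U)\times\Delta^{1}$ by declaring $\widetilde h_{u,s}\colon F_{h(u,s)}\to F_u$ to be the restriction $F(U_{h(u,s)})\to F(U_u)$. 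This is well defined for all $u$ and $s$ since $U_{h(u,s)}$ is an intersection of open sets of the form $\lambda(U_j)$ or $\mu(U_j)$, each containing the corresponding $U_j$, so that $U_u\subseteq U_{h(u,s)}$.

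The main point to verify is that $\widetilde h$ is genuinely a lift in the sense of Definition \ref{def:lift-local-system}, that is $\pi^{*}F(\varphi,v)\circ\widetilde h_{\varphi^{*}v}=\widetilde h_{v}\circ F(\varphi,h(v))$ for every simplicial operator $\varphi$ and every $v$. Because every structure map of both \v{C}ech systems is itself a restriction of sections, this identity reduces to the transitivity of restriction in the presheaf $F$: both sides restrict a section of $F(U_{h(v)})$ down to $F(U_v)$ through nested inclusions, and the value is independent of the intermediate open sets. I expect this to be the only computational step, and it is routine once one writes out the two chains of inclusions determined by $\varphi$. Evaluating $\widetilde h$ at the endpoints and using the formulas of Lemma \ref{lem3-2} then gives $\widetilde{j_0}\circ\widetilde h=\widetilde{\lambda_*}$ and $\widetilde{j_1}\circ\widetilde h=\widetilde{\mu_*}$, again by transitivity of restriction. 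Hence the pairs are simplicially homotopic, and the Proposition and Corollary recalled above yield $\lambda^{*}=\mu^{*}$ in cohomology.
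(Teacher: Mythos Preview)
Your proof is correct and follows exactly the paper's approach: invoke the simplicial homotopy of Proposition \ref{prop:homotopy_from_projection}, lift it to the \v{C}ech local systems via restriction maps, and apply the Proposition and Corollary on simplicially homotopic pairs. The paper's own proof is a two-line sketch that leaves the construction and verification of the lift $\widetilde h$ implicit; you have filled in precisely those details, and they are all correct (the key point being that every structure map involved is a presheaf restriction, so compatibility reduces to transitivity of restriction).
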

\begin{proof}
The simplicial maps $\lambda_{*}$ and $\mu_{*}$ from $K(\mathcal{U})$ to $K(\mathcal{U}')$ are homotopic in the simplicial sense, then the maps $\lambda^{*}$ and $\mu^{*}$
are homotopic in the sense of maps of differential complexes, cf. last section).
\end{proof}

\subsection{Comparison theorems}

Given a covering $\mathcal U$ of a topological space $X$, let  $\mathcal{A}({\mathcal U})$ denote the poset whose objects are {the non-empty} finite intersections of elements of $\mathcal U$, ordered by inclusion (thus the morphisms go from intersections to partial intersections), and $N(\mathcal{U})=N(\mathcal{A}({\mathcal U}))$ denotes the nerve of the category $\mathcal{A}({\mathcal U})$, cf. Example \ref{example:barycentric-nerve-covering}.
{Note that the set of open sets in $\mathcal U$ giving an object of $\mathcal{A}({\mathcal U})$ is not an element of the structure.
}

 The objects of $\mathcal{A}({\mathcal U})$ make an open covering of $X$ which is finer than $\mathcal U$. By choosing
{for each object $a$ of $\mathcal{A}(\mathcal{U})$ an element of $\mathcal{U}$ which contains $a$}, we obtain a map from $\mathcal{A}(\mathcal{U})$ to $\mathcal{U}$, that we denote $\pi$; it is a projection in the sense of Eilenberg-Steenrod, see Example \ref{example:projections-covering}. In what follows we will always assume that for $U\in \mathcal{U}$, $\pi(U)=U$. The map $\pi$ induces a simplicial map $\pi_*$ from the simplicial set $N(\mathcal{U})$ to the simplicial set $K(\mathcal{U})$ (which is the usual nerve of the covering $\mathcal U$ in the sense of Example \ref{example:nerve-covering}); it maps the sequence $V_0 \to \cdots \to V_n$ of elements of $\mathcal{A}(\mathcal{U})$ to the sequence $(\pi(V_0),...,\pi(V_n))$
of elements of $\mathcal{U}$.

Given a presheaf $F$ of abelian groups over $X$, we have defined the cosimplicial local system of \v{C}ech $F^{\vee}$
 over $K(\mathcal U)$ (cf. Example \ref{example:Cech-system}). To define a local system over $N({\mathcal U})$, we restrict $F$ to a presheaf on $\mathcal{A}({\mathcal U})$  and we take the \emph{lower} cosimplicial local system $F_*$ over $N({\mathcal U})$, as in  Example \ref{example:systems-functor}. Given an element $v=(V_0\to \cdots \to V_n)$ of $N_n( \mathcal{U})$, remark that $V_0 =\bigcap_{i=0}^n V_i \subseteq \bigcap_{i=0}^n \pi(V_i)$, hence there is a well-defined restriction map from $F^\vee(\pi_*v)$ to $F_*(v)$. This defines a lift $\tilde \pi$ of $\pi_*$ from $F^\vee$ to $F_*$ in the sense of Definition \ref{def:lift-local-system}, hence a  morphism
 \begin{equation}
  \pi^*: C^{\bullet}(K(\mathcal{U}); F^\vee) \to C^{\bullet}(N(\mathcal{U});F_*)
 \end{equation}

\begin{theorem}\label{them7}
The map $ \pi^{*}$ is an homotopy equivalence between $C^{\bullet}(K(\mathcal{U}))$ and $C^{\bullet}(N(\mathcal{U}))$.
\end{theorem}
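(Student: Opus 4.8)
The plan is to produce an explicit homotopy inverse of $\pi^{*}$ and then to show that both round‑trips are chain homotopic to the identity. The geometric picture guiding everything is that $N(\mathcal{U})$ is the barycentric subdivision of $K(\mathcal{U})$ (Example \ref{example:barycentric-nerve-covering}), and that the simplicial map $\pi_{*}$ plays the role of the classical ``last vertex map''; the sought‑after inverse is then the cochain‑level analogue of the barycentric subdivision operator. The one structural subtlety compared with the constant‑coefficient case is that we must thread the local systems $F^{\vee}$ and $F_{*}$ through every formula: the value of a \v{C}ech cochain on $u=(U_0,\dots,U_n)$ lives in $F(U_u)$ with $U_u=U_0\cap\cdots\cap U_n$, whereas the value of a lower cochain on $v=(V_0\to\cdots\to V_n)$ lives in $F(V_0)$, with $V_0=\bigcap_i V_i$ the \emph{smallest} open set of the flag. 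Because every transition map of both systems is a restriction map of the fixed presheaf $F$, and because subdivision only ever refines a simplex into pieces contained in its total intersection, all the restrictions that arise are mutually compatible, and this compatibility is exactly what lets the classical argument survive in the presence of coefficients.

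First I would define the candidate inverse $\rho\colon C^{\bullet}(N(\mathcal{U});F_{*})\to C^{\bullet}(K(\mathcal{U});F^{\vee})$. To a maximal flag of subsets $\emptyset\subsetneq T_0\subsetneq\cdots\subsetneq T_n=\{0,\dots,n\}$, equivalently a permutation $\tau$ of $\{0,\dots,n\}$ with $T_k=\{\tau(0),\dots,\tau(k)\}$, and to $u=(U_0,\dots,U_n)\in K_n(\mathcal{U})$, I associate the simplex $\Phi_\tau(u)=(W_0\to\cdots\to W_n)\in N_n(\mathcal{U})$ with $W_k=\bigcap_{i\in T_{n-k}}U_i$, so that $W_0=U_u$ and $W_0\subseteq\cdots\subseteq W_n$. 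Since $F_{*}(\Phi_\tau(u))=F(W_0)=F(U_u)=F^{\vee}_u$, the formula
\begin{equation}
\rho(d)(u)=\sum_{\tau}\operatorname{sgn}(\tau)\,d\bigl(\Phi_\tau(u)\bigr)
\end{equation}
defines a genuine element of $F(U_u)$. I would then verify that $\rho$ is a cochain map, $\rho\delta=\delta\rho$. This is the dual of the classical statement that barycentric subdivision is a chain map; the only new point is that the coboundary of $F^{\vee}$ twists (restricts) along \emph{every} face, while the coboundary of $F_{*}$ twists only along the $0$‑th face, so one must check that these two twisting patterns are intertwined by the assignment $\tau\mapsto\Phi_\tau$. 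This again follows from the compatibility of the restriction maps noted above.

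It then remains to establish $\pi^{*}\rho\simeq\id$ on $C^{\bullet}(N(\mathcal{U}))$ and $\rho\pi^{*}\simeq\id$ on $C^{\bullet}(K(\mathcal{U}))$. Neither composite is the identity on the nose — already in degree zero $\pi^{*}\rho(d)((V_0))=d((\pi(V_0)))$ restricted to $F(V_0)$, which differs from $d((V_0))$ unless $V_0\in\mathcal{U}$ — so honest homotopies are needed. Conceptually both composites are carried by the same combinatorial data as the identity (each simplex maps to cochains supported on its own subdivision), so they are chain homotopic to the identity by the acyclic‑carrier principle underlying the classical subdivision–last‑vertex equivalence. To remain inside the explicit framework of Section \ref{sec:simplicial-systems}, I would instead build the two homotopies by hand as prism operators, in the exact spirit of the operator $H$ constructed in Lemma \ref{lem3-2}: one sums, with alternating signs, over the degeneracies and flags that interpolate between a simplex and its subdivision, and then uses the simplicial identities \eqref{rcds} to collapse $\delta H+H\delta$ to the difference of the two endpoints. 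Where a bona fide simplicial homotopy between the underlying maps is available, the chain homotopy can be obtained directly from Lemmas \ref{lem3-1} and \ref{lem3-2}.

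The hard part will be precisely this last step: writing down the prism homotopies explicitly and checking the sign cancellations while carrying the non‑constant restriction maps of $F^{\vee}$ and $F_{*}$ through each term. This bookkeeping is what makes the proof ``surprisingly cumbersome'', as anticipated in the introduction, and it reflects the fact that the homotopy equivalence between a simplicial set and its barycentric subdivision, although entirely standard, cannot be made canonical.
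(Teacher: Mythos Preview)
Your plan is correct and follows the same overall strategy as the paper: exhibit an explicit inverse chain map and then produce homotopies for both round-trips. Your closed-form $\rho$, a signed sum over maximal flags, is in fact the same operator the paper defines \emph{recursively} under the name $Sd^{\bullet}$: the paper sets
\[
Sd^{n}(c)(u)=\sum_{i=0}^{n}(-1)^{n-i}\,Sd^{n-1}(c_{u})(U_{0},\dots,\widehat{U_{i}},\dots,U_{n})\big|_{U_{u}},
\]
with $c_{u}$ obtained by appending the full intersection $U_{u}$ to the flag; unwinding this recursion yields exactly your sum over permutations. So at the level of the inverse map there is no genuine difference, only a difference of presentation.

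The substantive divergence is in how the homotopies are built, and this is where your sketch is thinnest. Neither composite $\rho\,\pi^{*}$ nor $\pi^{*}\rho$ is induced by a simplicial map (your $\rho$ is not $f^{*}$ for any simplicial $f$), so Lemmas \ref{lem3-1} and \ref{lem3-2} do not apply directly; and the acyclic-carrier principle, while morally correct, is not set up in the paper's framework for cosimplicial local systems. What the paper does instead is construct the homotopy operators $D_{K}^{\bullet}$ and $D_{N}^{\bullet}$ by induction on degree, in lockstep with the recursive definition of $Sd$. The single algebraic identity that drives both inductions is $d(c_{u})=(dc)_{u}+(-1)^{n-1}c$ (equation \eqref{dcu}), which lets the inductive hypothesis feed straight into the homotopy relation at the next level. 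With your closed-form $\rho$, the analogous prism homotopy would have to be written as an explicit double sum over flags and positions, and organizing the sign cancellations (while carrying the restriction maps of $F^{\vee}$ and $F_{*}$) becomes markedly harder; you would very likely end up rediscovering the recursion. In short: your outline is right, but the paper's recursive packaging is not incidental---it is what makes the ``surprisingly cumbersome'' homotopy verification actually go through.
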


Before presenting the proof, let us see how this implies the isomorphism between topos cohomology and \v{C}ech cohomology in the case of abelian presheaves on a  poset,  provided it is a conditional meet semilattice i.e. that products exists conditionally. Let  $G$ be a contravariant functor of abelian groups on a poset $\mathcal A$, and let $\widehat G$ be the induced sheaf on the upper A-space  $X^{\mathcal A}$. We have seen that the topos cohomology of $G\in \operatorname{PSh}(\mathcal A)$ is isomorphic to the cohomology of the cochain complex $(C^\bullet(N(\mathcal A), G_*),\delta)$, whereas the \v{C}ech cohomology of $\widehat G$ is the cohomology of the complex $(C^\bullet(K(\mathcal U^{\mathcal A}),G^\vee),\delta)$.

The space $X^{\mathcal{A}}$ has a finest canonical open covering $\mathcal{U}^{\mathcal{A}}$ made by the upper sets $U^{\alpha};\alpha\in \mathcal{A}$. An inclusion
$U^{\alpha}\subseteq U^{\beta}$ corresponds to an arrow $\alpha\rightarrow\beta$, then the natural inclusion $\mathcal{A}\hookrightarrow \mathcal{A}(\mathcal{U}^{\mathcal{A}})$
is a covariant functor, and induces an injective simplicial covariant functor $\iota:N(\mathcal{A})\hookrightarrow N(\mathcal{U}^{\mathcal{A}})$.

We have a diagram of simplicial sets
\begin{equation}
N(\mathcal A) \overset\iota\to N(\mathcal {U}^{\mathcal A}) \overset{\pi_*}\to K(\mathcal U^{\mathcal A})
\end{equation}
where the last arrow is induced by any projection $\pi:\mathcal{A}(\mathcal U^{\mathcal A}) \to \mathcal U^{\mathcal A}$ that is the identity on $\mathcal U^{\mathcal A}$. Let us denote by $j$ the simplicial map $ \pi_* \circ \iota$. Given $a=(\alpha_0\to \cdots \to \alpha_n)$ in $N(\mathcal A)$, we have
\begin{equation}
G(\alpha_0) = G_*(a) = G^\vee(j(a)) = \widehat{G}(\cap_{i=0}^n U^{\alpha_i}) = \widehat{G}(U^{\alpha_0}),
\end{equation}
then there is a lift of $j$ from $G^\vee$ to $G_*$ (cf. Definition \ref{def:lift-local-system}) given by identities. Thus we deduce a morphism of chain complexes
\begin{equation}
j^{*}:C^{\bullet}(K(\mathcal{A});G^\vee)\rightarrow C^{\bullet}(N(\mathcal{A});G_*)
\end{equation}
that induces a morphism in cohomology.

%
%

\begin{corollary}\label{cor1:isomorphsm-poset} If products exist conditionally in $\mathcal A$, the chain map $j^{*}$ is a chain equivalence up to homotopy, thus induces an isomorphism in cohomology.
\end{corollary}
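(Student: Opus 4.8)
The plan is to factor the chain map $j^{*}$ and treat the two factors separately. Since $j = \pi_{*}\circ \iota$ as simplicial maps, the induced map on cochains decomposes as $j^{*} = \iota^{*}\circ \pi^{*}$, where
\[
\pi^{*}: C^{\bullet}(K(\mathcal{U}^{\mathcal{A}}); G^{\vee}) \to C^{\bullet}(N(\mathcal{U}^{\mathcal{A}}); G_{*})
\]
is precisely the map shown to be a homotopy equivalence in Theorem \ref{them7} (applied to $X = X^{\mathcal{A}}$ and its canonical covering $\mathcal{U}^{\mathcal{A}}$), and
\[
\iota^{*}: C^{\bullet}(N(\mathcal{U}^{\mathcal{A}}); G_{*}) \to C^{\bullet}(N(\mathcal{A}); G_{*})
\]
is induced by the inclusion $\iota: N(\mathcal{A}) \hookrightarrow N(\mathcal{U}^{\mathcal{A}})$. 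A composite of a homotopy equivalence with an isomorphism is again a homotopy equivalence, so it suffices to prove that $\iota^{*}$ is an isomorphism of cochain complexes.

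This is where the hypothesis that finite products exist conditionally in $\mathcal{A}$ becomes essential. First I would observe that whenever an intersection $U^{\alpha_{1}} \cap \cdots \cap U^{\alpha_{k}}$ is non-empty, its elements are common lower bounds of $\alpha_{1},\dots,\alpha_{k}$; conditional existence of products then yields the meet $\alpha_{1}\wedge\cdots\wedge\alpha_{k}$, and an iteration of the binary greatest-lower-bound argument gives $U^{\alpha_{1}} \cap \cdots \cap U^{\alpha_{k}} = U^{\alpha_{1}\wedge\cdots\wedge\alpha_{k}}$. Hence $\mathcal{U}^{\mathcal{A}}$ is closed under non-empty finite intersections, so the objects of $\mathcal{A}(\mathcal{U}^{\mathcal{A}})$ are exactly the sets $U^{\alpha}$, $\alpha\in\mathcal{A}$. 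Since $\alpha\mapsto U^{\alpha}$ is a bijection $\mathcal{A}\to\mathcal{U}^{\mathcal{A}}$ with $\alpha\to\beta$ if and only if $U^{\alpha}\subseteq U^{\beta}$, the functor $\iota:\mathcal{A}\to\mathcal{A}(\mathcal{U}^{\mathcal{A}})$ is an isomorphism of categories, and applying the nerve functor makes $\iota$ an isomorphism of simplicial sets.

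It remains to check that the local systems $G_{*}$ on $N(\mathcal{A})$ and on $N(\mathcal{U}^{\mathcal{A}})$ correspond under this isomorphism, so that $\iota^{*}$ is a bijection of cochains commuting with the differentials. This is immediate from the identifications displayed before the statement: for $a = (\alpha_{0}\to\cdots\to\alpha_{n})$ one has $G_{*}(\iota(a)) = \widehat{G}(U^{\alpha_{0}}) = G(\alpha_{0}) = G_{*}(a)$, and the structure maps agree because every relevant intersection collapses onto $U^{\alpha_{0}}$, turning the connecting restrictions into identities. Thus the lift of $\iota$ is by identities, $\iota^{*}$ is an isomorphism of cochain complexes, and the same collapsing shows that the factorization $j^{*} = \iota^{*}\circ\pi^{*}$ holds with compatible (identity) lifts. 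Therefore $j^{*}$ is a homotopy equivalence and induces an isomorphism in cohomology.

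The only genuinely delicate point is the reduction carried out in the second paragraph: one must verify carefully that conditional products force $\mathcal{U}^{\mathcal{A}}$ to be stable under non-empty finite intersection. Without this, $\mathcal{A}(\mathcal{U}^{\mathcal{A}})$ is a strictly finer barycentric subdivision and $\iota$ is only a homotopy equivalence at the level of Theorem \ref{them7}, rather than an isomorphism; the entire role of the hypothesis is to collapse this subdivision back onto $N(\mathcal{A})$, which is exactly what makes the comparison \emph{canonical}.
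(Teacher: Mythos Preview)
Your proof is correct and follows essentially the same approach as the paper: both use the conditional existence of products to identify $\mathcal{A}(\mathcal{U}^{\mathcal{A}})$ with $\mathcal{U}^{\mathcal{A}}$ (so that $\iota$ is an isomorphism of categories and hence of nerves), and then invoke Theorem~\ref{them7} for $\pi^{*}$. The paper's version is terser---it simply notes that $\pi$ can be taken to be the identity on $\mathcal{U}^{\mathcal{A}}$ once the identification is made---while you spell out the factorization $j^{*}=\iota^{*}\circ\pi^{*}$ and the compatibility of the local systems explicitly, but the substance is the same.
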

\begin{proof}
Under the hypothesis, one has  $\mathcal{A}(\mathcal {U}^{\mathcal A}) = \mathcal U^{\mathcal A}$, since every intersection $U^{\alpha_0}\cap \cdots \cap U^{\alpha_n}$ equals $U^{\alpha_0 \wedge \cdots \wedge \alpha_n}$. Hence $N(\mathcal A)\cong N(\mathcal U^{\mathcal A})$. The map $\pi_{*}$ is induced by $\pi=\id_{\mathcal{U}_\mathcal{A}}$. The claim then follows from Theorem \ref{them7}.
\end{proof}

The  equivalence above is natural in the category of posets with presheaves up to homotopy.

We close this section with the proof of Theorem \ref{them7}, inspired by a classical argument of Eilenberg and Steenrod:  starting with a simpicial complex $K$, they associated to it the poset $\mathcal{A}(K)$, whose elements are the faces (simplicies) of $K$; the nerve $N$ of this poset is naturally
isomorphic to the barycentric subdivision of $K$ (cf. \cite{Segal1968}).  In \cite[pp.~177-178]{Eilenberg1950}, the authors proved that there exists an homotopy equivalence between $N$ and $K$. The following proof is an adaptation of their argument to this more general setting.

%

\begin{proof}[Proof of Theorem \ref{them7}]
$1)$ first we construct by recurrence over $n$ a linear application $Sd^{n}$ from $C^{n}(N(\mathcal{U}))$ to $C^{n}(K(\mathcal{U}))$,
having the two following properties:\\
$(i)$ (locality) for any $c\in C^{n}(N(\mathcal{U}))$ and any collection $u=(U_0,...,U_n)$, the value $(Sd^{n}c)(u)$ in $F(U_u)$ depends only
of the values of $c$ on the descendent of the open sets $U_i;i=0,...,n$, i.e. the values $c(v)\in F(V_n)$ for the sequences $v=(V_0,..,V_n)$, where
each $V_i;i=0,...,n$ is included in a $U_j;j=0,...,n$;\\
$(ii)$ (morphism of cochain complex) $d\circ Sd^{*}=Sd^{*}\circ d$.

 For $n=0$, and $U_0\in \mathcal{U}=K_0(\mathcal{U})$, we take $Sd^{0} (c) (U_0)=c(U_0)$, this is allowed because $U_0$ is also an element of $N_0(\mathcal{U})$. The condition $(i)$ is evidently satisfied, and $(ii)$ is empty in this degree.

 For $n=1$, $c\in C^{1}(N(\mathcal{U}))$ and $u=(U_0,U_1)$, we pose $Sd^{1}c(U_0,U_1)=c(U_0,U_u)-c(U_1,U_u)$, where $U_u=U_0\cap U_1$. This is local, and  for $c_0\in C^{0}(N(\mathcal{U}))$:
\begin{multline}
Sd^{1}(dc_0)(U_0,U_1)=(c_0(U_0)-c_0(U_u))-(c_0(U_1)-c_0(U_u))\\=dc_0(U_0,U_1)=d\circ Sd^{0}(c)(U_0,U_1).
\end{multline}

 Then take $n\geq 2$,
and suppose that a map $Sd^{q}$ is constructed for every $q\leq n-1$, satisfying $(i)$ and $(ii)$. Take a cochain $c$ in $C^{n}(N(\mathcal{U}))$,
and consider an element $u=(U_0,...,U_n)$ of $K_n(\mathcal{U})$; remind we note $U_u$ the intersection (necessary non-empty) of the $U_i,i=0,...,n$.
We define an element $c_u\in C^{n-1}(N(\mathcal{U}))$ by taking on every decreasing sequence $v=(V_0,...,V_{n-1})$,
\begin{equation}
c_u(v)=c(V_0,...,V_{n-1}, U_u),
\end{equation}
if $V_{n-1}$
contains $U_u$
and taking $c_u(V_0,...,V_{n-1})=0$ in the opposite case.\\
Then we define
\begin{equation}
Sd^{n}(c)(u)=\sum_{i=0}^{n}(-1)^{n-i}Sd^{n-1}(c_u)(U_0,...,\widehat{U_i},...,U_n)|U_u.
\end{equation}

 The locality $(i)$ follows from the recurrence hypothesis: the definition of $c_u$ depends only of $U_u$ which is a descendent of $u$, and this is the same for the restriction to $U_u$, moreover the value of $Sd^{n-1}(c_u)$ on $(U_0,...,\widehat{U_i},...,U_n)$ depends only of the values of $c_u$ on
the sequences of descendent of $(U_0,...,\widehat{U_i},...,U_n)$.

 For $(ii)$, we have to compute $Sd^{n}(dc)(u)$ for a cochain $c\in C^{n-1}(N(\mathcal{U});F)$. For a decreasing
sequence $V_0,..., V_{n-1}$, then, by writing $V_{n}=U_u$, we have
\begin{align*}
(dc)_u(V_0,..., V_{n-1})&=dc(V_0,...,V_{n-1},U_{u})\\
&=\sum_{j=0}^{n}(-1)^{j}c(V_0,...,\widehat{V_j},...,V_{n})|U_u\\
&=d(c_u)(V_0,..., V_{n-1})|U_u+(-1)^{n}c(V_0,...,V_{n-1})|U_u;
\end{align*}
where $c_u$ is also defined by $c_u(V_0,...,V_{n-2})=c(V_0,...,V_{n-2}, U_u)$ if $V_{n-2}$ contains $U_u$ and
$c_u(V_0,...,V_{n-2})=0$ in the opposite case.
Which gives for reference, when $c$ belongs to $C^{n-1}(N(\mathcal{U});F)$:
\begin{equation}\label{dcu}
d(c_u)=(dc)_u+(-1)^{n-1}c.
\end{equation}
It follows from the recurrence hypothesis that
\begin{align*}
Sd^{n}(dc)(u)&=\sum _{i=0}^{n}(-1)^{n-i}Sd^{n-1}((dc)_u)(U_0,...,\widehat{U_i},...,U_n)|U_u\\
&=\sum _{i=0}^{n}(-1)^{n-i}(Sd^{n-1}d(c_u))(U_0,...,\widehat{U_i},...,U_n)|U_u\\
&+\sum _{i=0}^{n}(-1)^{i}(Sd^{n-1}c)(U_0,...,\widehat{U_i},...,U_n)|U_u\\
&=\sum _{i=0}^{n}(-1)^{n-i}(d\circ Sd^{n-1}(c_u))(U_0,...,\widehat{U_i},...,U_n)|U_u\\
&+\sum _{i=0}^{n}(-1)^{i}(Sd^{n-1}c)(U_0,...,\widehat{U_i},...,U_n)|U_u\\
&=(-1)^{n}(d\circ d)Sd^{n}(c_u)(u)+d(Sd^{n-1}(c))(u)\\
&=d \circ  Sd^{n-1}(c)(u).
\end{align*}
Therefore $Sd^{n}$ verifies $(ii)$.

\noindent $2)$ let us prove that the composition $Sd^{*}\circ \pi^{*}$ is homotopic to the identity of $C^{\bullet}(K(\mathcal{U});F)$.

For that purpose we construct a sequence of homomorphisms,
\begin{equation}
D^{n+1}_K: C^{n+1}(K(\mathcal{U});F)\rightarrow C^{n}(K(\mathcal{U});F),
\end{equation}
for $n\geq 0$, by recurrence over the integer $n$, such that
\begin{equation}
Id-Sd^{n}\circ \pi^{n}=d\circ D^{n}_K+D^{n+1}_K\circ d.
\end{equation}
For $n=0$, and $c\in C^{1}(K(\mathcal{U});F)$, we simply take $D^{1}_K (c)(U)=0$. This works because, if $c$ is a $0$-cochain of $K(\mathcal{U}),F$,
\begin{equation}
(Sd^{0}\circ \pi^{0}c)(U_0)=c(U_0).
\end{equation}
For $n=1$, and $c\in C^{2}(K(\mathcal{U});F)$, take
\begin{equation}
D^{2}_Kc(U_0,U_1)=c(U_0,U_1,\pi(U_0\cap U_1)|U_0\cap U_1.
\end{equation}
This gives for $c'\in C^{2}(K(\mathcal{U});F)$:
\begin{equation}
D^{2}_K(dc')(U_0,U_1)=c'(U_1,\pi(U_u))|U_u-c'(U_0,\pi(U_u))|U_u+c'(U_0,U_1);
\end{equation}
where as usual we have denoted $U_0\cap U_1$ by the symbol $U_u$. On the other side,
\begin{align*}
(Sd^{1}\circ \pi^{1}c')(U_0,U_1)&=-(\pi^{1}c')_u(U_1)|U_u+(\pi^{1}c')_u(U_0)|U_u\\
&=-(\pi^{1}c')(U_1,U_u)|U_u+(\pi^{1}c')(U_0,U_u)|U_u\\
&=-c'(U_1,\pi(U_u))|U_u+c'(U_0,\pi(U_u))|U_u.
\end{align*}
Then $Id-Sd^{1}\circ \pi^{1}=D^{2}_K\circ d+d\circ D_K^{1}$, as we expected.

 More generally, for any $n$, consider consider a $n$-cochain $c$
of $K(\mathcal{U})$ with respect to the local system $F$.  For a sequence $u'=(U'_0,...,U'_{n-1})$ in $\mathcal{U}$, let us define
\begin{equation}
c^{\pi}_u(U'_0,...,U'_{n-1})=c(U'_0,...,U'_{n-1},\pi(U_u))
\end{equation}
if $U_{u'}\supseteq \pi(U_u)$, and $c^{\pi}_u(U'_0,...,U'_{n-1})=0$ if not.

Then consider a decreasing sequence $v=(V_0,...,V_{n-1})$ in $\mathcal{A}_\mathcal{U}$. If $U_u\subseteq U_{\pi(v)}=\bigcap_{i=0}^{n-1}\pi (V_{i})$,
\begin{align*}
(\pi^{n}c)_u(V_0,...,V_{n-1})&=\pi^{n}c(V_0,...,V_{n-1},U_{u})\\
&=c(\pi(V_0),...,\pi(V_{n-1}),\pi (U_u))\\
&=c^{\pi}_u(\pi(V_0),...,\pi(V_{n-1}))\\
&=\pi^{n-1}(c^{\pi}_u)(V_0,...,V_{n-1}).
\end{align*}
If $U_u\nsubseteq U_{\pi(v)}$, we have $(\pi^{n}c_n)_u(v)=0=(\pi^{n-1}(c^{\pi}_u))(v)$. Therefore, in all cases
\begin{equation}\label{cpiu}
(\pi^{n}c)_u(v)=\pi^{n-1}(c^{\pi}_u)(v).
\end{equation}

 Now assume that $D^{q+1}_K$ is defined for $q\leq n$, satisfying the homotopy relation for $Id-Sd^{q}\circ \pi^{q}$, and consider a $n$-cochain $c$
of $K(\mathcal{U})$ with respect to the local system $F$; for every sequence $u=(U_0,...,U_n)$ in $\mathcal{U}$, the chosen definition of $Sd^{n}$ gives
\begin{equation}
(Sd^{n}\circ \pi^{n}(c))(U_0,...,U_n)=\sum_{i=0}^{n}(-1)^{n-i}(Sd^{n-1}(\pi^{n}c)_u)(U_0,...,\widehat{U_i},...,U_n)|U_u.
\end{equation}
Thus, applying \eqref{cpiu} we get
\begin{equation}
(Sd^{n}\circ \pi^{n}(c))(U_0,...,U_n))=\sum_{i=0}^{n}(-1)^{n-i}Sd^{n-1}\circ \pi^{n-1}(c^{\pi}_u)(U_0,...,\widehat{U_i},...,U_n)|U_u.
\end{equation}
By applying the hypothesis of recurrence, we get
\begin{multline}\label{formule}
(Sd^{n}\circ \pi^{n}(c))(U_0,...,U_n))=(-1)^{n}\sum_{i=0}^{n}(-1)^{i}c^{\pi}_u(U_0,...,\widehat{U_i},...,U_n)|U_u\\
+\sum_{i=0}^{n}(-1)^{n+1-i}D^{n}_K\circ d(c^{\pi}_u)(U_0,...,\widehat{U_i},...,U_n)|U_u\\
+\sum_{i=0}^{n}(-1)^{n+1-i}d(D^{n-1}_K(c^{\pi}_u))(U_0,...,\widehat{U_i},...,U_n)|U_u.
\end{multline}
The last sum is zero due to $d\circ d=0$, and the first sum is $(-1)^{n}d(c^{\pi}_u)$, therefore
\begin{multline}
(Sd^{n}\circ \pi^{n}(c))(U_0,...,U_n)=(-1)^{n}d(c^{\pi}_u)(U_0,...,U_n)\\
+\sum_{i=0}^{n}(-1)^{n+1-i}D^{n}_K\circ d(c^{\pi}_u)(U_0,...,\widehat{U_i},...,U_n)|U_u
\end{multline}
As we obtained a formula for $d(c_u)$, we obtain a formula for $d(c^{\pi}_u)|U_u$.
In fact, writing $U_{n+1}=\pi(U_{u})$,
\begin{align*}
(dc)^{\pi}_u(U_0,..., U_{n})|U_u&=dc(U_0,...,U_{n},\pi(U_{u}))|U_u\\
&=\sum_{j=0}^{n+1}(-1)^{j}c(U_0,...,\widehat{U_j},...,U_n)|U_u\\
&=d(c^{\pi}_u)(U_0,..., U_{n})|U_u+(-1)^{n+1}c(U_0,...,U_{n})|U_u;
\end{align*}
Then, replacing $d(c^{\pi}_u)|U_u$ by $(dc)^{\pi}_u+(-1)^{n}c$ in the formula \eqref{formule}, we get
\begin{multline}
(Sd^{n}\circ \pi^{n}(c))(U_0,...,U_n)=c(U_0,...,U_n)+(-1)^{n}(dc)^{\pi}_u(U_0,...,U_n)\\
+(-1)^{n+1}d\circ D^{n}_K\circ (dc)^{\pi}_u)(U_0,...,\widehat{U_i},...,U_n)|U_u-d\circ D^{n}_k(c)(U_0,...,U_n).
\end{multline}
Assuming that we have defined $D^{n}_K$ on $C^{n}(K(\mathcal{U});F)$, we define $D^{n+1}_K$ on $C^{n+1}(K(\mathcal{U});F)$ by the following formula:
\begin{equation}
D^{n+1}_K(c')=(-1)^{n+1}(c')^{\pi}_u+(-1)^{n+1}dD^{n}_K(c')^{\pi}_u.
\end{equation}
This gives the awaited result.

\noindent $3)$ To finish the proof of the theorem, we have to demonstrate that the composition $\pi^{*} \circ Sd^{*}$ is homotopic to the identity of $C^{\bullet}(N(\mathcal{U});F)$.
For that, we construct a sequence of homomorphisms,
\begin{equation}
D^{n+1}_N: C^{n+1}(N(\mathcal{U});F)\rightarrow C^{n}(N(\mathcal{U});F),
\end{equation}
by recurrence over the integer $n\geq 0$, such that
\begin{equation}
Id-\pi^{n}\circ Sd^{n}=d\circ D^{n}_N+D^{n+1}_N\circ d.
\end{equation}
For $n=0$, and $c\in C^{1}(N(\mathcal{U});F)$, we define $D^{1}_N(c)(V_0)=c(\pi(V_0),V_0)$. Remember that if $c$ is a zero cochain
for $N(\mathcal{U})$, $Sd^{0}c(U_0)=c(U_0)$.   Then
\begin{equation}
\pi^{0}Sd^{0}c(V_0)=c(\pi(V_0))=c(V_0)+(c\pi(V_0))-c(V_0))=c(V_0)-D^{1}_N(dc))(V_0);
\end{equation}
which gives $c-\pi^{0}Sd^{0}c=D^{1}_N(dc))$ as desired.

 Now assume the recurrence hypothesis, that there exist operators $D^{q+1}_K$ for $q\leq n$, satisfying the homotopy relation for $Id-\pi^{q}\circ Sd^{q}$, and consider a $n$-cochain $c$ of $N(\mathcal{U})$ with respect to the local system $F$; for every decreasing sequence $v=(V_0,...,V_n)$ in $\mathcal{A}_\mathcal{U}$, we have
\begin{align*}
(\pi^{n} \circ Sd^{n}(c))(v)&=(Sd^{n}c)(\pi(V_0),...,\pi(V_n))|V_n\\
&=\sum_{i=0}^{n}(-1)^{n-i}Sd^{n-1}(c)_{\pi(v)}(\pi(V_0),...,\widehat{\pi(V_i)},...,\pi(V_n))|V_n\\
&=\sum_{i=0}^{n}(-1)^{n-i}\pi^{n-1}(Sd^{n-1}(c)_{\pi(v)})(V_0,...,\widehat{V_i},...,V_n)|V_n;
\end{align*}
which gives by applying the hypothesis of recurrence:
\begin{multline}
(\pi^{n} \circ Sd^{n}(c))(V_0,...,V_n)=\sum_{i=0}^{n}(-1)^{n-i}c_{\pi(v)}(V_0,...,\widehat{V_i},...,V_n)|V_n\\
+\sum_{i=0}^{n}(-1)^{n+1-i}D^{n}_N( dc_{\pi(v)})(V_0,...,\widehat{V_i},...,V_n)|V_n\\
+\sum_{i=0}^{n}(-1)^{n+1-i}d\circ D^{n-1}_N(c_{\pi(v)})(V_0,...,\widehat{V_i},...,V_n)|V_n.
\end{multline}
The last sum is zero due to $d\circ d=0$, the first one is equal to $(-1)^{n}d(c_{\pi(v)})(v)$, and the second one to
$(-1)^{n+1}d(D^{n}_N( dc_{\pi(v)})(v)$, that is
\begin{equation}
(\pi^{n} \circ Sd^{n}(c))(v)=(-1)^{n}d(c_{\pi(v)})(v)+(-1)^{n+1}d(D^{n}_N( dc_{\pi(v)})(v).
\end{equation}
But the relation \eqref{dcu} tells that
\begin{equation}
d(c_{\pi(v)})(v)=(dc)_{\pi(v)}(v)+(-1)^{n}c(v).
\end{equation}
Thus by substituting, we get
\begin{multline}
(\pi^{n} \circ Sd^{n}(c))(v)=c(v)+(-1)^{n}(dc)_{\pi(v)})(v)\\-d(D^{n}_N(c)(v)-(-1)^{n}d(D^{n}_N( d(c_{\pi(v)}))(v);
\end{multline}
which gives the expected result,
\begin{equation}
c(v)-(\pi^{n} \circ Sd^{n}(c))(v)=d(D^{n}_N(c))(v)+D^{n+1}(dc)(v)_{\pi(v)})(v);
\end{equation}
if we define, for any $c'\in C^{n+1}(N(\mathcal{U});F)$ and any $v$ in $N_n(\mathcal{U})$:
\begin{equation}
D^{n+1}_N(c')(v)=(-1)^{n+1}c'_{\pi(v)}(v)+(-1)^{n}dD^{n}_N(c'_{\pi(v)}(v).
\end{equation}
This ends the proof.
\end{proof}

The constructions made in the proof show that the homotopy equivalence is natural in the category
of open covering of topological spaces and morphisms of local systems.




\appendix

\section{Topology and sheaves}\label{appendix1:topology}

Remind that a \emph{topological space} is a set $X$, equipped with a subset $\mathcal{T}$ of the set of parts $\mathcal{P}(X)$---named its \emph{topology}---that is supposed to contain $X$ and the empty set $\emptyset$, and to be closed under union and finite intersection. A map $f:X\rightarrow Y$ between
topological spaces is said \emph{continuous} if the inverse image of an open set is an open set. A topology $\mathcal{T}$ is said \emph{finer} than a topology $\mathcal{T}'$
if the identity is continuous from $X_{\mathcal{T}}$ to $X_{\mathcal{T}'}$. It is equivalent to ask that $\mathcal{T}'\subseteq \mathcal{T}$ as elements of
$\mathcal{P}(\mathcal{P}(X))$.

 An \emph{open covering} of an open set $V\in \mathcal{T}$
is a subset $\mathcal{U}\subseteq \mathcal{T}$ such that $V=\bigcup_{U\in \mathcal{U}}U$.


A topology $\mathcal{T}$ can be seen as a category, whose objects are the open sets of $X$ (i.e. the elements of $\mathcal{T}$); whenever $U\subseteq V$, there is one $U\to V$. The resulting category $\mathcal T$ is a poset (see Section \ref{sec:posets_alexandroff}).

 A \emph{presheaf} $\mathcal{F}$ over a topological space $X$ is a contravariant functor from $\mathcal{T}$ to the category of sets $\mathcal{E}$,
i.e. a family of sets $\{F(U)=F_U\}_{U\in \mathcal{T}}$, and maps $\{\pi_{VU}\}_{(U\to V) \in \mathcal T} $  such that $\pi_{UU}=Id_{F(U)}$ and
$\pi_{WV}\circ \pi_{VU}=\pi_{WU}$ when $W\subseteq V\subseteq U$. Frequently we will note $\pi_{VU}(s)=s|V$, as a restriction. Sometimes, the elements
$s$ of $F_U$ are named sections of $F$ over $U$.

 A \emph{sheaf} is a presheaf which satisfies the two following axioms:
 \begin{enumerate}
 \item For every $V\in \mathcal{T}$ and every open covering  $\mathcal{U}\subseteq \mathcal{T}$  of $V$, if $s, t$ are two elements of $F_V$
such that for any $U\in \mathcal{U}$ we have $s|U=t|U$, then $t=s$.
\item For every $V\in \mathcal{T}$ and every open covering  $\mathcal{U}\subseteq \mathcal{T}$  of $V$, if a family $(s_U)_{U\in \mathcal U} \in \prod_{U\in \mathcal U}  F_U$
is such that for all  $ U,U'\in \mathcal{U}$, $s_U|(U\cap U')=s_{U'}|(U\cap U')$, then there exists $s\in F_V$ such that for all $ U \in \mathcal{U}$, $s|U=s_{U}$.
 \end{enumerate}

 The notion of presheaf extends to any category $\mathcal{C}$ in place of $\mathcal{E}$: just take a contravariant functor from
$\emph{T}$ to $\emph{C}$. However the definition of sheaf requires a priori that $\mathcal{C}$ is a sub-category of $\mathcal{E}$.

 One of the main theorems in sheaf theory is the existence of a canonical sheaf $\mathcal{F}^{\sim}$
associated to a presheaf $\mathcal{F}$ on $(X,\mathcal T)$, built as follows  \cite[Sec.~II.5]{MacLane1992}. One says $s\in FU$ and $t\in FV$ have the same germ at $x$ if there exists $W\subseteq U\cap V$ such that $s|W = t|W$. Having the same germ at $x$ is an equivalence relation and one denotes $\operatorname{germ}_x s$ the corresponding equivalence class. More precisely, one can describe the set of all germs as a colimit $\varinjlim_{x\in U} FU$ over all open neighborhoods of $U$; the resulting set $F_x$ is called the \emph{stalk} of $F$ at $x$. Set $\Lambda_F=\prod_{x\in X} F_x$, and introduce the obvious projection $p:\Lambda_F\to X$. Any $s\in FU$ determines a map $\dot{s}:U\to \Lambda_P, \;x\mapsto (x,\operatorname{germ}_xs)$, which is a section of $p$. The set $\Lambda_F$ is topologized introducing $\set{\dot s(U)}{U\in \mathcal T, s\in FU}$ as a basis of open sets. Then $ F^{\sim}$ is defined as the sheaf of (continuous) sections of $\Lambda_P$ over the opens of $X$. This means that an element of $F^\sim(U)$ is a family $(s_x)\in\prod_{x\in U} \mathcal{F}_x$ which is locally
a germ of $\mathcal{F}$: for all $ y\in U$, there exist $V\in \mathcal{T}$ and $t\in FV$ such that $y\in V\subseteq U$ and for all $x\in V$, $\operatorname{germ}_x t = s_x$. The map $s\mapsto \dot s$ defines a natural transformation $F\to F^\sim$, which is an isomorphism when $F$ is a sheaf.

We consider now the functoriality of sheaves.  Let  $f:X\rightarrow Y$ be a continuous map; it induces a functor $f^{-1} : \mathcal T_Y \to \mathcal T_X$ between the topologies (seen as categories) of $Y$ and $X$, respectively.
\begin{enumerate}
\item  If $\mathcal{F}$ is a presheaf   over $X$, the \emph{direct image} $f_*\mathcal{F}$
is defined on $Y$ by the formula: $f_*\mathcal{F}(V)=\mathcal{F}(f^{-1}(V))$. If $\mathcal{F}$ is a sheaf, this is also
the case for $f_*\mathcal{F}$ \cite{MacLane1992}. (In fact, $f_*F$ is also the pullback $(f^{-1})^*F$ of $F$ under the functor $f^{-1}$ according to \cite[Sec.~I.5]{Artin1972}.)
\item If $\mathcal{G}$ is a presheaf  over $Y$, the \emph{inverse image} $f^{-1}\mathcal{G}$
is defined on $X$ by the formula: $f^{-1}\mathcal{G}(U)=\varinjlim_{V\supseteq f(U)}\mathcal{G}(V)$, where the limit is taken
over the directed family of opens subsets $V$ of $Y$ which contain $f(U)$. Even if $\mathcal{F}$ is a sheaf, in general
$f^{-1}\mathcal{G}$ is not a sheaf. We make use of the sheafification, and define the \emph{pullback} of $G$
by $f^{*}\mathcal{G}=(f^{-1}\mathcal{G})^{\sim}$.
\end{enumerate}

 The  functors $f_*, f^{-1}$ between the corresponding categories of presheaves are adjoint
 i.e. for any presheaves $\mathcal{F}$ on $X$
and $\mathcal{G}$ on $Y$, there exist natural bijections
\begin{equation}
\Hom_X(f^{-1}\mathcal{G},\mathcal{F}))\cong \Hom_Y(\mathcal{G},f_*\mathcal{F}).
\end{equation}
Similarly,  $f^{*}$ is left adjoint to $f_*$ in the categories of sheaves.

\begin{definition} A map of presheaves (resp. sheaves) from $(X,\mathcal{F})$ to $Y,\mathcal{G})$ is a pair $(f,\varphi)$,
where $f:X\rightarrow Y$ is continuous, and $\varphi$ is a morphism from $\mathcal{G}$ to $f_*\mathcal{F}$, or equivalently a
morphism $\varphi^{*}$ from $f^{-1}\mathcal{G}$ (resp. $f^{*}\mathcal{G}$) to $\mathcal{F}$.
\end{definition}

\section{\v{C}ech cohomology}\label{appendix3:cech}

We summarize some facts concerning \v{C}ech cohomology.

\subsection{Limit over coverings}

A \emph{ preorder} is a set $P$ with a binary relation that is transitive and reflexive. Equivalently, is a small category $\mathcal P$ where there exists at most one arrow between two objects. The preorder $\mathcal P$ is called directed if for any objects $a$ and $b$ of $\mathcal P$, there exists an object $c$ such that $a\to c$ and $b\to c$.

 As we saw in Section \ref{sec:simplicial-systems}, a covering $\mathcal{U}$ is called a  \emph{refinement} of another covering $\mathcal{U}'$ when every set of $\mathcal U$ is contained in some set of $\mathcal U'$. In that case, there exists a  map  $\lambda: \mathcal{U}\rightarrow \mathcal{U}'$, called \emph{projection}, such that for every $ U\in \mathcal{U}$ one has $U\subseteq \lambda(U)$. It is also said that
$\mathcal{U}$ is  \emph{finer} than $\mathcal{U}'$ \cite{Eilenberg1952}.

This notion of refinement does not give in general a partial ordering among coverings, but only a pre-order. So it is unlike the
notion of finer topology, which corresponds to the natural partial ordering by inclusion of subsets. This can be illustrated with two coverings of $\mathbb R$, such that $\mathcal U=\set{]n,\infty[}{n\text{ even}}$ and $\mathcal U'=\set{]n,\infty[}{n\text{ odd}}$.

\begin{lemma}
 The category of open coverings of $X$, such that $\mathcal U\to \mathcal U'$ if $\mathcal U'$ refines $\mathcal U$,  is a \emph{directed set}.
\end{lemma}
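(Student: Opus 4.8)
The plan is to verify directly the two defining properties of a directed set in the sense recalled just above: first, that refinement organizes the open coverings of $X$ into a preorder (a category with at most one arrow between any two objects), and second, that any two coverings admit a common upper bound. Recall that in this lemma the arrow $\mathcal{U}\to\mathcal{U}'$ means that $\mathcal{U}'$ refines $\mathcal{U}$, i.e.\ every member of $\mathcal{U}'$ is contained in some member of $\mathcal{U}$; thus upper bounds are finer coverings.

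The preorder axioms are immediate from the definition of refinement, and uniqueness of arrows is automatic since refinement is a mere relation. Reflexivity holds because every member of a covering $\mathcal{U}$ is contained in itself, so $\mathcal{U}\to\mathcal{U}$. For transitivity, I would suppose $\mathcal{U}\to\mathcal{U}'$ and $\mathcal{U}'\to\mathcal{U}''$, so that $\mathcal{U}'$ refines $\mathcal{U}$ and $\mathcal{U}''$ refines $\mathcal{U}'$; given $W\in\mathcal{U}''$, one chooses $V\in\mathcal{U}'$ with $W\subseteq V$ and then $U\in\mathcal{U}$ with $V\subseteq U$, whence $W\subseteq U$, so that $\mathcal{U}''$ refines $\mathcal{U}$ and $\mathcal{U}\to\mathcal{U}''$.

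The substance of the statement is directedness, which I would establish by the standard common-refinement construction. Given two coverings $\mathcal{U}_1$ and $\mathcal{U}_2$, form
\[
\mathcal{W}=\set{U_1\cap U_2}{U_1\in\mathcal{U}_1,\ U_2\in\mathcal{U}_2}.
\]
Each $U_1\cap U_2$ is open, being a finite intersection of open sets, and $\mathcal{W}$ covers $X$: for $x\in X$ one picks $U_1\in\mathcal{U}_1$ and $U_2\in\mathcal{U}_2$ with $x\in U_1$ and $x\in U_2$, so that $x\in U_1\cap U_2\in\mathcal{W}$. Since $U_1\cap U_2\subseteq U_1$ and $U_1\cap U_2\subseteq U_2$, the covering $\mathcal{W}$ refines both $\mathcal{U}_1$ and $\mathcal{U}_2$, that is $\mathcal{U}_1\to\mathcal{W}$ and $\mathcal{U}_2\to\mathcal{W}$, exhibiting the required common upper bound.

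There is essentially no serious obstacle here; the only point requiring a moment's attention is the verification that the family $\mathcal{W}$ is genuinely an open covering of $X$ rather than merely a family refining each $\mathcal{U}_i$. This is precisely where no finiteness or paracompactness hypothesis intervenes, and where only the closure of the topology $\mathcal{T}$ under \emph{finite} intersection (from Appendix \ref{appendix1:topology}) is used.
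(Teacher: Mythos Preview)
Your proof is correct and follows essentially the same approach as the paper: construct a common refinement of two coverings by taking pairwise intersections. The paper's proof is terser---it only states the directedness step and takes the non-empty intersections, whereas you include all intersections and spell out the preorder axioms and the verification that $\mathcal{W}$ actually covers $X$---but the substance is identical.
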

\begin{proof}
If $\mathcal{U}$ and $\mathcal{V}$ are open coverings of $X$, the set of non-empty intersections $U\cap V$, for $U\in \mathcal{U}$
and $V\in \mathcal{V}$ is a refinement of both $\mathcal{U}$ and $\mathcal{V}$.
\end{proof}

Given a directed set $\mathcal P$, a \emph{directed system of sets} (associated to $\mathcal P$) is a covariant functor from $\mathcal P$ to a category $\mathcal{C}$, i.e. a family of objects $E_a$ for $ a\in\mathcal P$, and a family $f_{ab}$ of morphisms $E_a\rightarrow E_b$, associated to ordered pairs $a\preceq b$, such that  $\forall a,\quad f_{aa}=1_{E_a}$ and   $\forall a,b,c, \quad a\preceq b \preceq c \Rightarrow f_{ac}=f_{bc}\circ f_{ab}$.

By definition a \emph{direct limit} of such direct system in the category $\mathcal{C}$ is an object $E$ with a set of morphisms $E_a\rightarrow E, a\in \mathcal P$, such that for
any $a\preceq b$ $\varphi_b\circ f_{ab}=\varphi_a$, which is initial, i.e. for any  object $Y$ and set $\psi_a:E_a\rightarrow Y$ verifying the same rule there exist a morphism
$h:E\rightarrow Y$ making all evident diagrams commutative. If such a limit exists it is unique up to unique isomorphism, and denoted $\varinjlim E_a$.

 When $\mathcal{C}$ is the category of sets $\mathcal{E}$, the direct limit always exists, it is a the quotient of the union of the disjoint sets $\widehat{E}_a=E_a\times\{a\}$
by the equivalence relation $e_a\approx e_b$ if there exists $c\in C$, with $a\preceq c$, $b\preceq c$ and $f_{ac}(e_a)=f_{bc}(e_b)$, i.e. asymptotic equality. If the category $\mathcal{C}$ is the subcategory of $\mathcal{E}$ made by abelian groups and their morphisms, the direct limit is an abelian group.

\begin{definition}
For all $n\in \mathbb{N}$, $H^{n}(X;F)=\varinjlim H^{n}(\mathcal{U};F)$, the direct limit being associated to
the directed set of open coverings of $X$.
\end{definition}

\subsection{Functoriality}
 Suppose given a map of presheaves $(f,\varphi):(X,\mathcal{F})\rightarrow (Y,\mathcal{G})$, and two open coverings
$\mathcal{U}$, $\mathcal{V}$ of $X$ and $Y$ respectively, such that $\mathcal{U}$ is a refinement of $f^{-1}(\mathcal{V})$.\\
We can choose a projection map $\lambda$ from $\mathcal{U}$ to $\mathcal{V}$, i.e. $\forall U\in \mathcal{U}$, $U\subseteq f^{-1}(\lambda (V))$.
From Proposition \ref{prop:homotopy_from_projection}, two such maps are homotopic in the simplicial sense. This induces a natural application of chain complexes:
\begin{equation}
(f,\varphi,\lambda)^{*}:C^{\bullet}(\mathcal{V};\mathcal{G})\rightarrow C^{\bullet}(\mathcal{U};\mathcal{F}),
\end{equation}
which commutes with the coboundary operators.

 Consider the particular case of an inclusion $J:X\hookrightarrow Y$. A covering $\mathcal{V}$ of $Y$ induce a covering  $\mathcal{U}$ of $X$,
made of the (non-empty) intersections $V\cap X$
for $V\in \mathcal{V}$; there is an evident projection $\lambda$ from $\mathcal{U}$ to $\mathcal{V}$.\\
\textbf{Hypothesis}: the map $\varphi$ is surjective, i.e. for any open set $V$ in $Y$ the map $\varphi_V:\mathcal{G}(V)\rightarrow \mathcal{F}(V\cap X)$
is surjective.\\
In particular this happens if $\mathcal{G}=J_*(\mathcal{F})$ over $Y$.\\
If $c\in C^{n}(\mathcal{U};\mathcal{F})$, there exists $\widetilde{c}\in C^{n}(\mathcal{V};\mathcal{G})$ such that, for any family
$V_0,...,V_n$ of elements of $\mathcal{V}$, we have
\begin{equation}
\varphi(\widetilde{c}(V_0,...,V_n))=c(V_0\cap X,...,V_n\cap X)\in \mathcal{F}(\bigcap_{i=0}^{n}(V_i\cap X)=\varphi(\mathcal{G}(\bigcap_{i=0}^{n}(V_i)).
\end{equation}
This gives $\widetilde{f_\lambda}^{*}(\widetilde{c})=c$, then the map $\widetilde{f_\lambda}^{*}=(f,\varphi,\lambda)^{*}$ is surjective.

Let us define
\begin{equation}
C^{\bullet}(\mathcal{V},\mathcal {U};\mathcal{G},\mathcal{F})=\Ker((f,\varphi,\lambda)^{*}).
\end{equation}
By the snake's lemma, we obtain a natural long exact sequence in cohomology:
\begin{multline}
...\rightarrow H^{q}(\mathcal{V};\mathcal{G})\rightarrow H^{q}(\mathcal{U};\mathcal{F})\rightarrow H^{q+1}(\mathcal{V},\mathcal {U};\mathcal{G},\mathcal{F})\\
\rightarrow H^{q+1}(\mathcal{V};\mathcal{G})\rightarrow H^{q+1}(\mathcal{U};\mathcal{F})\rightarrow ...
\end{multline}

 This sequence survive to the direct limits over coverings and gives an exact of \v{C}ech cohomology of the pair
$(\mathcal{F},\mathcal{G})$ over the pair $(X,Y)$.\\

\section{Möbius inversion and Index formula}\label{appendix5:finite_proba_functors}

 This is a continuation of Section \ref{sec:free-sheaves} on free sheaves. Our aim is to give a proof of the Theorem \ref{thm6}.

 We introduce now the hypothesis that the $\{E_i\}_{i\in I}$ are finite sets, of respective cardinality $N_i$.

 If $N_\alpha$ denotes the cardinality of $E_\alpha$, we have $N_\alpha=\prod_{i\in \alpha}N_i$.\\

If we suppose that {$\mathcal{A}$ is finite}, and that it satisfies the strong intersection property, the sheaf $F$ has a
decomposition in direct sum:
\begin{equation}
\forall\alpha\in \mathcal{A},\quad F_\alpha=\bigoplus_{\beta\subseteq \alpha}T_\beta,
\end{equation}
Let us denote by $D_\alpha$ the dimension of $T_\alpha$, for $\alpha\in \mathcal{A}$. We have $N_\alpha=\sum_{\alpha\rightarrow\beta}D_\beta$.
Then the M\"obius inversion formula gives
\begin{equation}
\forall\alpha\in \mathcal{A},\quad D_\alpha=\sum_{\alpha\rightarrow\beta}\mu_{\alpha,\beta}N_\beta;
\end{equation}
where the integral numbers $\mu_{\alpha\beta}$ are the M\"obius coefficents of $\mathcal{A}$.

 Let us remind what are these coefficients \cite{Rota1964}.  For any locally finite poset $\mathcal{A}$, they are
defined by the two following equations:
\begin{equation}
\forall \alpha,\gamma,\quad \delta_{\alpha=\gamma}=\sum_{\beta|\alpha\rightarrow \beta \rightarrow \gamma} \mu_{\alpha,\beta}
=\sum_{\alpha\rightarrow \beta \rightarrow \gamma} \mu_{\beta,\gamma}.
\end{equation}
\begin{equation}
\forall \alpha,\beta,\quad \alpha\nrightarrow \beta \Rightarrow \mu_{\alpha,\beta}=0.
\end{equation}

 This gives a function from $\mathcal{A}\times \mathcal{A}$ to $\mathbb{Z}$, which is named the M\"obius function of the poset.  The M\"obius function of $\mathcal{A}^{op}$ is given by $\mu^{*}_{\beta,\alpha}=\mu_{\alpha, \beta}$.

For  example, if $\mathcal{A}$ is the full set of parts of a finite set $I$, including the empty set or not, we have, for $\beta\subseteq\alpha$:
\begin{equation}\label{combinatorialindex}
\mu_{\alpha,\beta}=(-1)^{|\alpha| -|\beta|},
\end{equation}
where $|\alpha|$ denotes the cardinality of $\alpha$, for any $\alpha\in \mathcal{A}$. This formula is called the inclusion-exclusion principle.
When $\beta=\emptyset$, the above formula holds true if we pose $|\emptyset|=-1$.\\

If $\alpha\supseteq\omega$ are two elements of $\mathcal{A}$, and if $\mathcal{A}(\alpha,\omega)$ is the sub-poset of $\mathcal{A}$
made by the elements $\beta$ such that $\alpha\rightarrow\beta\rightarrow\omega$, the restriction of the M\"obius function of $\mathcal{A}$
to $\mathcal{A}(\alpha,\omega)$ coincides with the M\"obius function of $\mathcal{A}(\alpha,\omega)$.

 The formula \eqref{combinatorialindex} extends to the poset associated to any simplicial complex. This follows
from the preceding assertion, because in the case of a manifold,
for every pair of elements $\alpha, \omega$ of $\mathcal{A}$ such that $\omega\subseteq \alpha$, the elements $\beta$  between
$\alpha$ and $\omega$ are the same in $\mathcal{A}$ or in the simplex defined by $\alpha$.

 If $\mathcal{A}$ verifies the strong intersection property, for each $\alpha\in \mathcal{A}$, the dimension of $H^{0}(\mathcal{U}_{\mathcal A};T_\alpha)$ is $D_\alpha$, then
Theorem \ref{thm3:IP_implies_extrafine} and Proposition \ref{prop5} imply:

\begin{proposition}\label{prop5-1:dimH0-V}
If the poset $\mathcal{A}$ is finite and satisfies the strong intersection property, and if the $\{E_i\}_{i\in I}$ are finite sets,
\begin{equation}
\dim_{\mathbb{K}}H^{0}(\mathcal{U}_{\mathcal{A}};F)=\sum_{\alpha, \beta\in \mathcal{A}}\mu_{\alpha\beta}N_\beta.
\end{equation}
\end{proposition}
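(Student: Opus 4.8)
The plan is to assemble the result from the interaction decomposition of the free sheaf $F$ together with the computation of its zeroth cohomology, and then to invoke the Möbius inversion recalled above. Since $\mathcal{A}$ is finite it automatically satisfies condition $(a^{*})$ and is locally finite, so all the hypotheses of the earlier results are in force.

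First I would record the decomposition. By Theorem \ref{thm3:IP_implies_extrafine}, the strong intersection property guarantees that the injective presheaf $V$ admits an interaction decomposition; dualizing, the free sheaf $F$ splits as $F_\alpha = \bigoplus_{\beta\subseteq\alpha} T_\beta$, exactly as displayed at the start of this appendix. Because each $E_\alpha$ is finite, every stalk $F_\alpha = \mathbb{K}^{E_\alpha}$ is finite-dimensional, hence so is each summand $T_\beta$; write $D_\beta = \dim_{\mathbb{K}} T_\beta$. Taking dimensions of $F_\alpha$ and using $\dim_{\mathbb{K}} F_\alpha = |E_\alpha| = N_\alpha$ yields the triangular linear system
\[
N_\alpha = \sum_{\alpha\to\beta} D_\beta.
\]

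Next I would solve this system. As $\mathcal{A}$ is (locally) finite, the Möbius function $\mu_{\alpha\beta}$ of $\mathcal{A}$ is defined, and Möbius inversion applied to the above relation gives
\[
D_\alpha = \sum_{\alpha\to\beta}\mu_{\alpha\beta} N_\beta,
\]
where the sum may be extended over all $\beta\in\mathcal{A}$ since $\mu_{\alpha\beta}=0$ whenever $\alpha\nrightarrow\beta$.

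Finally I would compute the dimension of the space of global sections. Proposition \ref{prop5} (whose hypotheses $(a^{*})$ and the strong intersection property both hold) asserts that $F$ is acyclic and that $H^{0}(\mathcal{U}_{\mathcal{A}};F)\cong \bigoplus_{\alpha\in\mathcal{A}} S_\alpha^{*}$, each summand being the stalk $T_\alpha$ of dimension $D_\alpha$. Therefore $\dim_{\mathbb{K}} H^{0}(\mathcal{U}_{\mathcal{A}};F) = \sum_{\alpha\in\mathcal{A}} D_\alpha$, and substituting the Möbius formula and summing over $\alpha$ gives
\[
\dim_{\mathbb{K}} H^{0}(\mathcal{U}_{\mathcal{A}};F) = \sum_{\alpha\in\mathcal{A}} D_\alpha = \sum_{\alpha,\beta\in\mathcal{A}}\mu_{\alpha\beta}N_\beta,
\]
as claimed. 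There is no genuine obstacle here: the content of the statement is carried entirely by the acyclicity-plus-decomposition of Proposition \ref{prop5} and by Möbius inversion; the only points requiring care are checking that the finiteness of $\mathcal{A}$ and of the $E_i$ supply all the standing hypotheses, and that $H^{0}$ of each elementary factor $T^{\alpha}$ is exactly its stalk $T_\alpha$.
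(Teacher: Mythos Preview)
Your proof is correct and follows essentially the same route as the paper: the argument there (written in the paragraphs immediately preceding the proposition) also uses the interaction decomposition $F_\alpha=\bigoplus_{\beta\subseteq\alpha}T_\beta$, M\"obius inversion to express $D_\alpha=\dim T_\alpha$ in terms of the $N_\beta$, and Proposition~\ref{prop5} to identify $H^0(\mathcal{U}_{\mathcal{A}};F)$ with $\bigoplus_\alpha T_\alpha$. Your explicit check that finiteness of $\mathcal{A}$ supplies condition~$(a^*)$ is a welcome clarification.
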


 In particular for the full simplex $\Delta(n-1)=\mathcal{P}(J)$, if $J$ has cardinality $n$, and if $N_i=N$ for any vertex, the dimension of $H^{0}(\mathcal{A};V)$ is $N^{n}$.
 \begin{proof}
Since we include the empty set, with $V_\emptyset=S_\emptyset$ of dimension $1$, we get:
\begin{align*}
\sum_{\alpha, \beta \in \mathcal{A}}\mu_{\alpha\beta}N_\beta &=\sum_{k=0}^{n}C_n^{k}\sum_{l=0}^{k}C_k^{l}(-1)^{k-l}N^{l}\\
&=\sum_{k=0}^{n}(-1)^{k}C_n^{k}(1-N)^{k}
=\sum_{k=0}^{n}C_n^{k}(N-1)^{k}\\
&=(N-1+1)^{n}=N^{n}.
\end{align*}
 \end{proof}

\begin{remark}
In this case, if we remove the empty set, and compute the expression
we get the same result
\begin{align*}
\sum_{\alpha, \beta\in \mathcal{A}}\mu_{\alpha\beta}N_\beta&=\sum_{k=1}^{n}C_n^{k}\sum_{l=1}^{k}C_k^{l}(-1)^{k-l}N^{l}
=\sum_{k=1}^{n}(-1)^{k}C_n^{k}((1-N)^{k}-1)\\
&=\sum_{k=1}^{n}C_n^{k}(N-1)^{k}-\sum_{k=1}^{n}C_n^{k}(-1)^{k}\\
&=((N-1+1)^{n}-1)-((1-1)^{n}-1)=N^{n}.
\end{align*}
\end{remark}

 Let us now delete the maximal face $\alpha=I$, then the poset $\mathcal{A}$ becomes the boundary $\partial \Delta(n-1)$
of the $(n-1)$-simplex. If we include the empty set in $\mathcal{A}$, and compute the dimension of $H^{0}$; we obtain
\begin{align*}
\sum_{\alpha, \beta\in \mathcal{A}^{\times +}}\mu_{\alpha\beta}N_\beta&=\sum_{k=0}^{n-1}C_n^{k}\sum_{l=0}^{k}C_k^{l}(-1)^{k-l}N^{l}\\
&=\sum_{k=0}^{n-1}(-1)^{k}C_n^{k}(1-N)^{k}
=\sum_{k=0}^{n-1}C_n^{k}(N-1)^{k}\\
&=(N-1+1)^{n}-(N-1)^{n}\\
&=N^{n}-(N-1)^{n}.
\end{align*}

\begin{remark}
Now the expression $\sum_{\alpha, \beta\in \mathcal{A}}\mu_{\alpha\beta}N_\beta$ is not the same if we exclude $\emptyset$,
because in this case, we have
\begin{align*}
\sum_{\alpha, \beta\in \mathcal{A}^{\times}}\mu_{\alpha\beta}N_\beta&=\sum_{k=1}^{n-1}C_n^{k}\sum_{l=1}^{k}C_k^{l}(-1)^{k-l}N^{l}\\
&=\sum_{k=1}^{n-1}(-1)^{k}C_n^{k}((1-N)^{k}-1)
=\sum_{k=1}^{n-1}C_n^{k}(N-1)^{k}-\sum_{k=1}^{n-1}C_n^{k}(-1)^{k}\\
&=((N-1+1)^{n}-1-(N-1)^{n})-((1-1)^{n}-1-(-1)^{n})\\
&=N^{n}-(N-1)^{n}+(-1)^{n}.
\end{align*}
\end{remark}

We will see just below why there is a difference for the boundary $\partial \Delta(n-1)$ and not for the simplex $\Delta(n-1)$!

 If $\mathcal{A}$ satisfies the weak intersection property, then
\begin{equation}
\forall\alpha\in \mathcal{A},\quad \overline{{F}}_\alpha=\bigoplus_{\beta\subseteq \alpha}\overline{{T}}_\beta.
\end{equation}

\begin{proposition}\label{prop5-2}
If the poset $\mathcal{A}$ is finite and satisfies the weak intersection property, and if the $\{E_i\}_{i\in I}$ are finite sets,
\begin{equation}
\dim_{\mathbb{K}}H^{0}(\mathcal{U}_{\mathcal{A}};\overline{{F}})=\sum_{\alpha, \beta\in \mathcal{A}}\mu_{\alpha\beta}(N_\beta-1).
\end{equation}
\end{proposition}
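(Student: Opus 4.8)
The plan is to mirror, almost verbatim, the proof of Proposition \ref{prop5-1:dimH0-V}, replacing the free sheaf $F$ by its restricted version $\overline{F}$ and the interaction pieces $T_\alpha$ by the reduced pieces $\overline{T}_\alpha$. First I would invoke the interaction decomposition recorded immediately above the statement: since $\mathcal{A}$ satisfies the weak intersection property, for every $\alpha\in\mathcal{A}$ one has
\begin{equation}
\overline{F}_\alpha=\bigoplus_{\beta\subseteq\alpha}\overline{T}_\beta,
\end{equation}
where $\overline{T}_\beta$ is the stalk at $\beta$ of the summand sub-sheaf $\overline{T}^\beta$ of $\overline{V}^{*}$. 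Write $\overline{D}_\alpha=\dim_{\mathbb K}\overline{T}_\alpha$.

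The counting step is the only place where the restriction matters. Since $\overline{F}_\alpha$ is the hyperplane of $F_\alpha=\mathbb{K}^{(E_\alpha)}$ cut out by the single condition that the coordinates in the canonical basis sum to zero, we have $\dim_{\mathbb K}\overline{F}_\alpha=N_\alpha-1$. Comparing dimensions in the decomposition gives, for every $\alpha$,
\begin{equation}
N_\alpha-1=\sum_{\alpha\to\beta}\overline{D}_\beta.
\end{equation}
Applying the Möbius inversion formula of this appendix (its defining relations $\delta_{\alpha=\gamma}=\sum_{\beta\,:\,\alpha\to\beta\to\gamma}\mu_{\alpha\beta}$) then yields
\begin{equation}
\overline{D}_\alpha=\sum_{\alpha\to\beta}\mu_{\alpha\beta}(N_\beta-1).
\end{equation}

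The cohomological step follows from Proposition \ref{prop6}: $\overline{F}$ is acyclic over $X_{\mathcal A}$ and $H^{0}(\mathcal{U}_{\mathcal A};\overline{F})$ splits as the direct sum of the $H^{0}(\mathcal{U}_{\mathcal A};\overline{T}^{\alpha})$. Each $\overline{T}^{\alpha}$ is supported on $U^{\alpha}$ with all its internal restriction morphisms isomorphisms, so by Lemma \ref{lem5} its space of global sections is exactly its stalk $\overline{T}_\alpha$, of dimension $\overline{D}_\alpha$; hence $\dim_{\mathbb K}H^{0}(\mathcal{U}_{\mathcal A};\overline{F})=\sum_\alpha\overline{D}_\alpha$. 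Substituting the Möbius expression and using $\mu_{\alpha\beta}=0$ whenever $\alpha\nrightarrow\beta$ gives
\begin{equation}
\sum_\alpha\overline{D}_\alpha=\sum_\alpha\sum_{\alpha\to\beta}\mu_{\alpha\beta}(N_\beta-1)=\sum_{\alpha,\beta\in\mathcal{A}}\mu_{\alpha\beta}(N_\beta-1),
\end{equation}
which is the desired formula. The one point genuinely requiring attention—and the only conceptual obstacle—is that the decomposition above holds under the \emph{weak} rather than the strong intersection property; this is exactly the phenomenon handled upstream in Proposition \ref{prop6} (via the reduction that sends empty-intersection contributions into the constants), so once it is cited the remainder of the argument is pure bookkeeping.
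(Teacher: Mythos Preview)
Your proposal is correct and follows exactly the approach the paper intends: the paper's own proof simply says ``We apply Proposition \ref{prop6} as we applied Proposition \ref{prop5} to prove Proposition \ref{prop5-1:dimH0-V},'' and you have faithfully unpacked that sentence, replacing $N_\alpha$ by $N_\alpha-1$ in the M\"obius inversion and invoking Proposition \ref{prop6} in place of Proposition \ref{prop5}. There is nothing to add.
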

\begin{proof}
We apply Proposition \ref{prop6} as we applied Proposition \ref{prop5} to prove Proposition \ref{prop5-1:dimH0-V}.
\end{proof}

\begin{definition}
Let $\mathcal{A}$ be a finite poset, the \emph{Euler characteristic} of $\mathcal{A}$ is defined by
\begin{equation}
\chi(\mathcal{A})=\sum_{\alpha, \beta\in \mathcal{A}}\mu_{\alpha\beta}.\\
\end{equation}
\end{definition}

 In fact, the Euler characteristic was defined by Rota \cite{Rota1964}, when $\mathcal{A}$ contains a maximal element $I$ and a minimal element $\emptyset$,
by the formula
\begin{equation}
E(\mathcal{A})=1+\mu_{I,\emptyset}.
\end{equation}
But take any finite poset $\mathcal{A}$, and add formally to $\mathcal{A}$ a maximal element $1$ and a minimal element $0$, obtaining a poset $\mathcal{A}^{+}$. Then,
for any $\alpha \in \mathcal{A}$,
\begin{equation}
0=\mu(\alpha,0)+\sum_{\beta\in \mathcal{A}, \beta\subseteq\alpha}\mu(\alpha,\beta),
\end{equation}
and
\begin{equation}
0=\mu(1,0)+\mu(0,0)+\sum_{\alpha\in \mathcal{A}}\mu(\alpha,0).
\end{equation}
Consequently
\begin{equation}
\chi(\mathcal{A})=\mu(1,0)+\mu(0,0)=E(\mathcal{A}^{+}).
\end{equation}
Therefore the two definitions accord. See also the categorical extension of these ideas by Tom Leinster  \cite{Leinster2008}.

 The \emph{Hall formula} (cf. \cite{Rota1964}), tells that
\begin{equation}
E(\mathcal{A})=r_0-r_1+r_2-...;
\end{equation}
where each $r_k$ is the number of non degenerate chains of length $k$ in $\mathcal{A}$.
This number is the Euler characteristic of the nerve $N(\mathcal{A})$ of the category $\mathcal{A}$, therefore
$\chi(\mathcal{A})$ coincides with the Euler characteristics of $N(\mathcal{A})$.
But we have seen in Section \ref{sec:nerves-comparison}  that the \v{C}ech cohomology of the (lower) Hausdorff space $\mathcal{A}$  with coefficients in $\mathbb{Z}$,
is isomorphic to the simplicial cohomology  of $N(\mathcal{A})$. Then $\chi(\mathcal{A})$ also coincides with the Euler-\v{C}ech characteristic
of the (lower) Hausdorff space $\mathcal{A}$. By duality of the M\"obius function, this is also true for the upper topology.

 From the inclusion-exclusion formula, it is easy to show that for the poset of a simplicial complex, the number $\chi(\mathcal{A})$
is the alternate sum of the numbers of faces of each dimension:
\begin{equation}
\chi(\mathcal{A})=a_0-a_1+...
\end{equation}
as in the original definition by Euler.

 Now consider $\mathcal{A}$ (finite) as a topological subspace $\mathcal{A}_t$ of the simplex $\mathcal{P}(I)$; its closure $\overline{\mathcal{A}}$
is a simplicial complex. Moreover, if $\mathcal{A}$ satisfies the weak intersection property, the inclusion of $\mathcal{A}_t$ in $\overline{\mathcal{A}}$ is
an equivalence of homotopy; therefore, in this case, $\chi(\mathcal{A})$ is also the usual Euler characteristic of the metric
space $\mathcal{A}$.

 Consequently, Proposition \ref{prop5-2} can be rephrased by the following formula
\begin{equation}
\dim_{\mathbb{K}}H^{0}(\mathcal{U}_{\mathcal{A}};\overline{F})+\chi(\mathcal{A})=\sum_{\alpha, \beta\in \mathcal{A}}\mu_{\alpha\beta}N_\beta.
\end{equation}

 Applying Theorem \ref{thm3:IP_implies_extrafine}, we get the following result:

 \begin{reptheorem}{thm6}
If the poset $\mathcal{A}$ is finite and satisfies the weak intersection property, and if the $\{E_i\}_{i\in I}$ are finite sets, then
\begin{equation}
\chi(\mathcal{A};F)=\sum_{k=0}^\infty (-1)^{k}\dim_{\mathbb{K}}H^{k}(\mathcal{U}_{\mathcal{A}};F)=\sum_{\alpha, \beta\in \mathcal{A}}\mu_{\alpha\beta}N_\beta.
\end{equation}
\end{reptheorem}

 Remark that we also have
\begin{equation}
\chi(\mathcal{A};F)=\dim_{\mathbb{K}}H^{0}(\mathcal{U}_{\mathcal{A}};\overline{F})+\chi(\mathcal{A}).
\end{equation}

 In the example of $\Delta(n-1)$, we have $\chi(\mathcal{A})=1$, and when $\mathcal{A}=\partial \Delta(n-1)$
we have $\chi(\mathcal{A})=1+(-1)^{n}$, therefore, with all the $N_i$ equals to $N$, this explains the results obtained in the previous remarks.

{The copresheaf $\overline F$ corresponds to signed measures of sum 1. One can deduce, for instance, that  compatible measures of sum $1$ over the poset $\partial \Delta(n-1)$ come from a global measure, that always exists (Marginal Theorem) and depends on $(N-1)^{n}$ degrees of freedom. However, in general,
none of these measures is positive.}


\bibliographystyle{amsplain}
 \bibliography{biblio}

\end{document}